\newcommand{\IP}[2]{\left< #1 , #2 \right>}
\newcommand{\arccosh}{\text{arccosh}}
\newcommand{\Q}{\ensuremath{\mathcal{Q{}}}}
\newcommand{\R}{\ensuremath{\mathbb{R}}}
\newcommand{\D}{\text{D}}
\renewcommand{\S}{\ensuremath{\mathbb{S}}}
\newcommand{\gmcfh}{GMCF${}^{H \le\, 0}$}
\newtheorem{thm}{Theorem}[section]
\newtheorem{cor}[thm]{Corollary}
\newtheorem{prop}[thm]{Proposition}
\newtheorem{lem}[thm]{Lemma}
\newtheorem*{defn}{Definition}
\theoremstyle{remark}
\newtheorem*{rmk}{Remark}
\title[Mean curvature flow with free boundary outside a hypersphere]{Mean curvature flow with free boundary outside a hypersphere}
\author{Glen Wheeler and Valentina-Mira Wheeler$^*$}
\thanks{*: Corresponding author.}
\address{Valentina-Mira Wheeler \\
           Institut f\"ur Mathematik,
           Universit\"at Potsdam\\
           Am Neuen Palais 10,\\
           D-14469 Potsdam, Germany\\
           email: vulcanov@math.zedat.fu-berlin.de }
\address{\emph{Current: }Glen Wheeler, Valentina-Mira Wheeler \\
           Institute for Mathematics and its Applications \\
           University of Wollongong\\
           Northfields Avenue,\\
           Wollongong, NSW, 2522, Australia\\
           email: vwheeler@uow.edu.au \\
           email: glenw@uow.edu.au }
\keywords{mean curvature flow, free boundary conditions, geometric
analysis} \subjclass[2000]{53C44\and 58J35}
\begin{document}

\begin{abstract}
The purpose of this paper is twofold: firstly, to establish sufficient conditions
under which the mean curvature flow supported on a hypersphere with exterior
Dirichlet boundary exists globally in time and converges to a minimal
surface, and secondly, to illustrate the application of Killing vector fields in
the preservation of graphicality for the mean curvature flow with free
boundary.
To this end we focus on the mean curvature flow of a topological annulus with
inner boundary meeting a standard $n$-sphere in $\R^{n+1}$ perpendicularly and
outer boundary fixed to an $n-1$-sphere with radius $R>1$ translated by a
vector $he_{n+1}$ for $h\in\R$ where $\{e_i\}_{i=1,\ldots,n+1}$ is the standard
basis of $\R^{n+1}$.
We call this the \emph{sphere problem}.
Our work is set in the context of graphical mean curvature flow with either
symmetry or mean concavity/convexity restrictions.
For rotationally symmetric initial data we obtain, depending on the exact
configuration of the initial graph, either long time existence and convergence
to a minimal hypersurface with boundary or the development of a finite-time
curvature singularity.
With reflectively symmetric initial data we are able to use Killing vector
fields to preserve graphicality of the flow and uniformly bound the mean
curvature pointwise along the flow.
Finally we prove that the mean curvature flow of an initially mean
concave/convex graphical surface exists globally in time and converges to a
piece of a minimal surface.
\end{abstract}

\maketitle

\section{Introduction}
%{{{

There has been much work on the mean curvature flow problem for immersions and
graphs with or without boundary conditions.
The study of Ecker--Huisken \cite{ecker1989mce,ecker1991ieh} is a seminal work
including a sharp theorem on global existence for initially graphical Lipschitz
data.
The non-parametric mean curvature flow of graphs with either a ninety-degree
contact angle or Dirichlet boundary condition on cylindrical domains has been
studied by Huisken \cite{huisken1989npm}, who proved a global existence
theorem.
In this direction we also mention the work of Altschuler--Wu
\cite{altschuler1994} which allows for arbitrary contact angle at the boundary
for graphs over $\R^2$.
Guan \cite{guan1996mean} later generalised this to arbitrary intrinsic
dimension.
Recently Shahriyari \cite{leithesis} proved that any complete translating
solution to the mean curvature flow in $\R^3$ must be either wedged between two
planes, one one side of a plane, or entire.
This and other results from \cite{leithesis} shed new light on \cite{altschuler1994,guan1996mean}.

A natural next step in this line of research is to study the mean curvature
flow of graphs with a free boundary on a fixed hypersurface in $\R^{n+1}$.
This began with a series of results on the mean curvature flow of immersions
with free boundary, where a restriction on the angle of contact with a fixed
hypersurface in Euclidean space is imposed.
In \cite{thesisstahl} Stahl proved that the mean curvature flow with free
boundary on a fixed support hypersurface $\Sigma$ either exists for all time or
develop a curvature singularity.
In the special case of convex initial hypersurfaces and convex umbilic support
hypersurfaces $\Sigma$ he proved that the curvature becomes unbounded in finite
time and that the rescaled solution is asymptotic to a hemisphere.

Buckland \cite{buckland2005mcf}, using a localised reflection technique, proved
a monotonicity formula for mean curvature flow with a free boundary analogous
to the groundbreaking result of Huisken \cite{huisken1990asymptotic}.
See also \cite{ecker2001local} for a local version of Huisken's monotonicity
formula.
Buckland's result provides, again in the case of umbilic, convex contact
hypersurfaces, with a classification of Type I singularities on the boundary.
Regularity theory for this problem has been developed by Koeller
\cite{koeller2007singular} using the reflection construction of Buckland and
the local boundary estimates of Stahl.
He obtained results analogous to those for the compact mean curvature flow, see
\cite{ecker2004rtm}, in the boundary setting.

In this paper we consider the mean curvature flow of graphs with a free
boundary on $\S^{n} \subset \R^{n+1}$ anchored at a fixed Dirichlet height
outside the sphere.
Stahl's earlier investigation into the mean curvature flow
\cite{stahl1996convergence,stahl1996res} treats the problem on the interior of
a sphere.
The picture to keep in mind, for Stahl's result, is of a bubble evolving on the
inside of a fixed sphere.
One of the primary motivations for our work here is to treat the problem on the
exterior of a fixed sphere, complementing the results of Stahl.

In particular, suppose $\Sigma$ is a standard $n$-sphere in $\R^{n+1}$ with
$n\geq 2$ centred at the origin.
We use $\nu^\Sigma:\R^{n+1}\rightarrow\R^{n+1}$ to denote its unit inner normal
vectorfield.
Let $M^n$ be a smooth, orientable $n$-dimensional Hausdorff paracompact
manifold with two smooth, disjoint boundaries $\partial_N M^n$ and $\partial_D
M^n$, where the subscripts $N$ and $D$ stand for Neumann and
Dirichlet respectively.
Set $M_0:=F_0(M^n)\subset{\R}^{n+1}$ where $F_0:M^n\rightarrow {\R}^{n+1}$ is a
smooth embedding satisfying
\begin{align*}
&{\partial}_N M_0 \ \equiv \ F_0({\partial}_N M^n)\ =\ M_0 \cap \Sigma,\\
&\IP{{\nu}^{M_0}}{{\nu}^{\Sigma} \circ F_0} (p)\ =\ 0, \text{ }\forall
p~\in
{\partial}_N M^n,\\
&{\partial}_D M_0 \ \equiv\  F_0({\partial}_D M^n)\ =\ \partial
B_R\big(O_n^{h_0}\big),
\end{align*}
for some positive $R\ >\ 1$.
We use the embedding $F_0$ to induce a Riemannian structure on $M^n$ via the
pullback of the Euclidean metric; that is, $(M^n, F_0^*\delta)$ is a Riemannian
manifold where $\delta$ is the standard metric on $\R^n$.
In the above we denoted by $\partial B_R\big(O_n^{h_0}\big)$ the boundary of
the $n$-dimensional disk $B_R\big(O_{n}^{h_0}\big)$ of radius $R$ centred at
the origin $O_n^{h_0}=(0,0,..,0,h_0)$ in the hyperplane $\{x_{n+1}=h_0\}$.

We use the initial data above to generate a mean curvature flow with boundary.
Let $I \subset {\R}$ be an open interval and $F_t=F(\cdot,t):M^n \rightarrow
{\R}^{3}$ be a one-parameter family of smooth embeddings for all $t\in I$.
The family of hypersurfaces $(M_t)_{t\in I}$, where $M_t=F_t(M^n)$, are said to
be evolving by mean curvature flow with Neumann free boundary condition on
$\Sigma$ and a constant $h_0$ height on the Dirichlet boundary if
\begin{align}
&\frac{\partial F}{\partial t}(p,t)\ =\ -\  H(p,t)\nu^{M_t}(p,t),\quad&&\forall (p,t)\in M^n \times I,\text{ flow equation,}
\notag
\\
&F(p,0)\ =\ F_0(p),&&\forall p\in M^n\text{, initial condition,}\notag\\
&F(p,t)\  \in \ \Sigma, &&\forall (p,t)\in {\partial}_N M^n \times I\text{, contact condition,}\label{immersion}\\*
&\IP{\nu^{M_t}}{{\nu}^{\Sigma} \circ F} (p,t)\ =\ 0, &&\forall (p,t)\in {\partial}_N M^n \times I\text{, Neumann condition,}\notag\\
&F(p,t)\ =\ F_0(p) \in \partial B_R(O_n^{h_0}),&&\forall (p,t)\in {\partial}_{D}M^n \times I\notag\text{, Dirichlet condition,}\\*
&H(p,0)\ =\ 0,&&\forall (p,t)\in {\partial}_{D}M^n \notag\text{, compatibility condition.}
\end{align}
Our convention is, throughout this work and when not stated otherwise, that the unit normal ${\nu}^{\Sigma}$ to $\Sigma$ points outside the evolving surfaces $M_t$.
Here this means that it points into the sphere, making (with our sign conventions) the curvature of $\Sigma$ negative.

The first issue to be treated is the short time existence of the problem, independent of an additional graph condition.
This can be easily obtained if we write the surfaces for a short time over the initial manifold and apply standard parabolic theory such as is contained in \cite{lieberman1996second}.
This yields the following theorem.

\begin{thm}[Short time existence]
Suppose $F_0:M^n\rightarrow\R^{n+1}$ is as above.
There exists a maximal $T > 0$ such that a one-parameter family of embeddings $F:M^n\times[0,T)\rightarrow\R^{n+1}$ satisfying
\eqref{immersion} exists, where for each $t\in[0,T)$ the embedding $F_t$ is smooth.
The family of embeddings $F$ is unique up to reparametrisation.
\label{thmshortimeexsitence}
\end{thm}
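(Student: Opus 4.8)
The plan is to mirror the strategy indicated in the excerpt: the evolution \eqref{immersion} is only degenerate parabolic because of its invariance under tangential reparametrisation, so I would first break this invariance by writing the evolving hypersurface as a normal graph over $M_0$, reducing \eqref{immersion} to a strictly parabolic scalar equation, and then invoke the quasilinear parabolic theory of Lieberman \cite{lieberman1996second}. Since $M^n$ is compact with boundary and $F_0$ is a smooth embedding, there is $\varepsilon>0$ such that the normal exponential map $(p,s)\mapsto F_0(p)+s\,\nu^{M_0}(p)$ is a diffeomorphism onto a tubular neighbourhood of $M_0$ for $|s|<\varepsilon$. In the interior and near $\partial_D M^n$ I would seek the flow in the form $F(p,t)=F_0(p)+\rho(p,t)\,\nu^{M_0}(p)$; near $\partial_N M^n$ this must be corrected by a tangential term, arranged via a $\Sigma$-adapted family of coordinates as in Stahl \cite{thesisstahl}, so that the contact condition $F(p,t)\in\Sigma$ holds identically. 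Substituting into $\partial_t F=-H\,\nu^{M_t}$ and projecting onto $\nu^{M_0}$ yields a quasilinear equation
\[
\pD{\rho}{t}\ =\ a^{ij}(p,\rho,D\rho)\,D_iD_j\rho\ +\ b(p,\rho,D\rho),
\]
with smooth coefficients and $(a^{ij})$ positive definite for $\rho$ small (the principal part being, to leading order, the Laplace--Beltrami operator of $M_0$), hence uniformly parabolic on a short time interval.

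Next I would check that the five auxiliary conditions in \eqref{immersion} translate correctly. The initial condition is $\rho(\cdot,0)=0$; on $\partial_D M^n$ the Dirichlet condition becomes $\rho(\cdot,t)=0$; on $\partial_N M^n$ the perpendicularity $\IP{\nu^{M_t}}{\nu^\Sigma\circ F}=0$, after the change of variables, becomes an oblique (regular, non-tangential) boundary condition $\IP{\beta(p,\rho)}{D\rho}=\psi(p,\rho)$ whose conormal has a strictly positive component along the inward conormal of $M_0$; and the compatibility condition $H(\cdot,0)=0$ on $\partial_D M^n$ is precisely the zeroth-order compatibility condition ensuring smoothness up to $t=0$. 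Because $\partial_N M^n$ and $\partial_D M^n$ are disjoint there are no corner points, so this mixed initial-boundary value problem is covered by the quasilinear parabolic theory in \cite{lieberman1996second}: one obtains a unique solution $\rho\in C^{2+\alpha,1+\alpha/2}$ on some $[0,T)$, and a standard bootstrap with interior and boundary Schauder estimates (again \cite{lieberman1996second}) upgrades $\rho$ to be smooth on $M^n\times[0,T)$. Undoing the change of variables yields a one-parameter family of smooth embeddings solving \eqref{immersion}, and taking $T$ maximal gives the asserted maximal existence time.

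For uniqueness up to reparametrisation I would take any smooth solution $\T F$ of \eqref{immersion} on $[0,T')$ and solve the ODE $\partial_t\phi_t=-\big(\text{tangential part of }(\partial_t\T F)\circ\phi_t\big)$ for a family of diffeomorphisms $\phi_t$ of $M^n$: the Neumann condition makes the driving vector field tangent to $\Sigma$ along $\partial_N M^n$, and the Dirichlet condition makes it vanish on $\partial_D M^n$, so $\phi_t$ preserves both boundary conditions. Then $\T F\circ\phi_t$ evolves purely in the normal direction, so its height function over $M_0$ solves the same scalar problem as $\rho$; uniqueness for that quasilinear parabolic problem (from \cite{lieberman1996second}, or directly from the maximum principle applied to the difference of two solutions) forces $\T F\circ\phi_t=F_t$, as required.

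The main obstacle is the boundary analysis near $\partial_N M^n$: constructing a parametrisation in which the contact condition $F(p,t)\in\Sigma$ is satisfied automatically while the Neumann condition becomes a genuinely oblique, smooth boundary operator, and then verifying that the reduced problem meets the structural hypotheses (obliqueness, parabolicity, compatibility) required to apply \cite{lieberman1996second}. The interior parabolicity and the Dirichlet part at $\partial_D M^n$ are routine by comparison.
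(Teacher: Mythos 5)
Your proposal follows essentially the same route as the paper, which proves short time existence exactly by writing the evolving surfaces for a short time as graphs over the initial manifold and invoking the quasilinear parabolic theory of \cite{lieberman1996second} (with the detailed reduction, including the $\Sigma$-adapted parametrisation near $\partial_N M^n$ and the uniqueness-up-to-reparametrisation argument, carried out in \cite{thesisvulcanov}). The only quibble is terminological: the condition $H(\cdot,0)=0$ on $\partial_D M^n$ is the first-order (not zeroth-order) compatibility condition for the Dirichlet problem, the zeroth-order one being automatic since $\rho(\cdot,0)=0$.
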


A detailed exposition of the proof of this result can be found in \cite{thesisvulcanov}.

We typically consider the problem \eqref{immersion} under the additional assumption that the initial hypersurface is also a graph in the direction of a fixed vector field $\zeta$ in ${\R}^{n+1}$, that is
\begin{align}
\IP{\nu^{M_0}}{\zeta}>0. \label{graphcondition}
\end{align}
We are interested in the long time behaviour of \eqref{immersion} with or without the graph condition \eqref{graphcondition}.

The most restrictive setting which we consider is that of initially rotationally symmetric graphs.
In this case, the problem \eqref{immersion} is described by a family of functions $\omega:(r(t),R)\times [0,T) \rightarrow \R$ evolving by
\begin{align}
\frac{\partial \omega}{\partial t}\ \  &= \frac{d^2\omega}{dy^2}\
\frac{1}{1+(\frac{d\omega}{dy})^2}+\frac{d\omega}{dy}\
\frac{n-1}{y}~~\text{ on }~~ \bigcup_{t\in[0,T)}(r(t),R) \times \{t\},\label{sphereradiallysymmetricgraph}\\
\frac{d \omega}{dy}(r(t),t) \ &= \ \frac{\sqrt{1-r(t)^2}}{r(t)}
\text{ and }{\omega}^2(r(t),t)+r^2(t)\ =\ 1~~\text{ for all
}~~t \in [0,T),\notag\\
\omega(R,t) \ &=\ h_0 ~~\text{ on }~~[0,T)\notag,\\
\omega(y,0)\  &= \ \omega_0~~\text{ on }~~ (r(0),R)\notag,
\end{align}
Here we have used the fact that $\Sigma$ is a unit sphere centred at the origin of ${\R}^{n+1}$, specifically that $|{\omega}_{\Sigma}|\ =\ \sqrt{1-y^2}$ and ${\nu}^{\Sigma}=-\sqrt{1-y^2}\left(\frac{y}{\sqrt{1-y^2}},\
1\right)$ if above the $\R^n$ plane or the opposite sign otherwise, where $y=|(x_1,\ldots,x_n)|_{{\R}^n}$ and $\omega_\Sigma$ is the graph that generates $\Sigma$. Also let us denote by $D(t)=(r(t),D)$ the domain of $\omega(\cdot,t)$ for all $t\geq 0$.

Even in the graphical rotationally symmetric setting one may encounter finite-time singularities.
This can be proved by pinching the evolving family at the North or South pole of the supporting sphere.
Some additional arguments allow us to further pin-down the rate at which the second fundamental form blows up at this singularity.
The precise result is the following.

\begin{thm}[Curvature singularity on the boundary]
Let $\omega$ satisfy \eqref{sphereradiallysymmetricgraph} with $\sup_{D(0)} |\omega_0|\ > \ 1$.
If there exists a self-similar torus in the region of $\R^{n+1}$ defined by $\{ (x_1,\ldots,x_{n+1})\ :\ 1<|x_{n+1}|< \sup_ {D(0)}|\omega_0(y)|\}$
then the solution for the problem \eqref{sphereradiallysymmetricgraph} exists for at most finite time $T<\infty$ and the graphs $\omega(y,t)$ develop a curvature singularity at $y=0$ as $t\rightarrow T$.
Furthermore there exists a positive constant $C<\infty$ such that
\begin{align*}
||A||_\infty^2(y)\leq \ C\frac{1}{(T-t)^2},
\end{align*}
where we have denoted by $A$ the second fundamental form of the hypersurfaces evolving by mean curvature flow generated by $\omega$.
\label{thmcurvaturesingularity}
\end{thm}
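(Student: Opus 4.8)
The plan is to turn the self-similar torus into a barrier forcing $T<\infty$, then to show the free boundary runs into the pole of $\Sigma$, and finally to read off the blow-up rate from a differential equation satisfied by $r(t)$. For the first part, run the mean curvature flow $(\mathcal{T}_t)_{t\in[0,\tau)}$ with the given self-similar torus as initial datum. By self-similarity it remains a torus of revolution about the $x_{n+1}$-axis, contracts as $t\to\tau<\infty$ to a point $(0,\dots,0,z_\ast)$ on the axis with $z_\ast\in(1,\sup_{D(0)}|\omega_0|)$, and stays inside the slab $S:=\{1<|x_{n+1}|<\sup_{D(0)}|\omega_0|\}$ for every $t<\tau$. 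The slab $S$ is disjoint from $\Sigma$ and from the Dirichlet circle $\partial B_R(O_n^{h_0})$; moreover the parabolic maximum principle for \eqref{sphereradiallysymmetricgraph} gives $\sup_{D(t)}|\omega(\cdot,t)|\le\sup_{D(0)}|\omega_0|$, so $M_t$ also stays in $\{|x_{n+1}|\le\sup_{D(0)}|\omega_0|\}$. Hence $\partial M_t$ never touches $\mathcal{T}_t$, and the avoidance principle yields $M_t\cap\mathcal{T}_t=\emptyset$ for all $t<\min\{T,\tau\}$. Since by hypothesis $\mathcal{T}_0$ lies in the region bounded by $M_0$, the polar cap of $\Sigma$ and the Dirichlet disc, and cannot leave it without meeting $M_t$, the profile $\omega(\cdot,t)$ stays above the shrinking profile of $\mathcal{T}_t$; as the latter concentrates at $(0,\dots,0,z_\ast)$ this pins the left endpoint of the domain to the torus scale, $r(t)\le C\sqrt{\tau-t}\to0$. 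The contact identity then forces $\tfrac{d\omega}{dy}(r(t),t)=\sqrt{1-r(t)^2}/r(t)\to\infty$, so the flow cannot be smooth up to $\tau$ and $T\le\tau<\infty$.

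\emph{Locating the singularity at $y=0$.} We show $r(t)\to0$ as $t\to T$. If $\liminf_{t\to T}r(t)>0$ then $r(t)\ge\varepsilon>0$ on all of $[0,T)$, so $\langle\nu^{M_t},e_{n+1}\rangle$ is bounded below by a multiple of $r(t)$ on the Neumann boundary; together with a boundary gradient estimate at the Dirichlet end (Huisken's non-parametric Dirichlet theory \cite{huisken1989npm}, which is where the compatibility condition $H(\cdot,0)=0$ enters) the minimum principle for $\langle\nu^{M_t},e_{n+1}\rangle$ gives a uniform gradient bound; the interior estimates of Ecker--Huisken \cite{ecker1991ieh} and the free-boundary estimates of Stahl \cite{thesisstahl} then bound $\|A\|$ uniformly on $[0,T)$, contradicting $T<\infty$. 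Hence $r(t)\to0$. The principal curvature of $M_t$ in the rotational directions at the free boundary equals $-\sqrt{1-r(t)^2}/r(t)$, so $\|A\|_\infty\to\infty$; and since the above estimates remain valid on any fixed slab $\{y\ge\delta>0\}$, the blow-up occurs precisely at $y=0$.

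\emph{Blow-up rate.} The Type I lower bound $\|A\|_\infty^2\ge c(T-t)^{-1}$ follows from the evolution inequality $\partial_t\|A\|_\infty^2\le C\|A\|_\infty^4$ and $\|A\|_\infty\to\infty$. For the upper bound, differentiate $\omega(r(t),t)^2+r(t)^2=1$ in $t$, substitute the contact identity and the equation \eqref{sphereradiallysymmetricgraph} evaluated at $y=r(t)$, and simplify using $1+\big(\tfrac{d\omega}{dy}(r(t),t)\big)^2=1/r(t)^2$; this gives
\begin{align*}
\frac{d}{dt}\,r(t)^2\ =\ -\,2(n-1)\big(1-r(t)^2\big)\ -\ 2\,r(t)^4\,\omega(r(t),t)\,\frac{d^2\omega}{dy^2}(r(t),t).
\end{align*}
Granted that $\tfrac{d^2\omega}{dy^2}(r(t),t)$ is not too negative near $t=T$ (a boundary Hessian estimate), the last term is $o(1)$ and $\tfrac{d}{dt}r(t)^2\le-(n-1)$, whence $r(t)^2\ge(n-1)(T-t)$. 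Finally, the rotational principal curvatures of $M_t$ are at most $1/y\le1/r(t)$ and a comparison bounds the profile curvature by $C/r(t)^2$, so $\|A\|_\infty^2\le C\,r(t)^{-4}\le C(T-t)^{-2}$.

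\emph{Main obstacle.} The decisive points are the barrier construction in the first part — arranging the initial configuration so that the avoidance principle genuinely excludes long-time existence and forces $r(t)$ down at the torus rate — and the bootstrap in the last part, where the bound $\|A\|_\infty\le Cr(t)^{-2}$ and the control of $\tfrac{d^2\omega}{dy^2}$ at the free boundary must be established in tandem, off the rotational structure and the Type I lower bound.
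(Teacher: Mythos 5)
You should know at the outset that the paper itself contains no proof of Theorem \ref{thmcurvaturesingularity}: Section 2 only proves Theorem \ref{thmlongtimeexistencecatenoid} and defers the singularity result to \cite{vmwheeler2012rotsym} and \cite{thesisvulcanov}, so I can only judge your argument on its merits against the pole-pinching mechanism the introduction indicates. Your first two paragraphs do follow that mechanism, and your reading of the hypothesis (the torus lies \emph{under} the graph, consistent with the remark after the corollary in Section 2, not merely anywhere in the slab) is the intended one. But the step ``the profile stays above the shrinking profile of $\mathcal{T}_t$\ldots this pins $r(t)\le C\sqrt{\tau-t}$'' is asserted where the actual content lies: the graph starts at height $\sqrt{1-r(t)^2}\le 1$, below the torus, and exceeds its top over the torus' radial shadow, so it must thread the hole, i.e.\ pass the torus' height range at radii smaller than its inner equator; one must argue (by a linking/intersection count with the disc spanning the inner equator, or by monotonicity of the distance between the two flows) that this threading cannot be undone without touching the torus or moving the boundaries into the relevant region, and only then does $r(t)$ inherit the scale $\sqrt{\tau-t}$ of the inner equator. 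That argument is standard and your sketch is repairable, as is the localisation of the singularity at $y=0$ in your second paragraph.

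The genuine gap is the rate $\|A\|_\infty^2\le C(T-t)^{-2}$, which is a stated conclusion of the theorem and is not proved by your third paragraph. Your ODE $\frac{d}{dt}r^2=-2(n-1)(1-r^2)-2r^4\,\omega\,\omega_{yy}(r(t),t)$ is correctly derived, but the conclusion $r(t)^2\ge (n-1)(T-t)$ needs exactly the ``boundary Hessian estimate'' you grant yourself, namely $|\omega_{yy}(r(t),t)|\lesssim r(t)^{-4}$, equivalently that the profile principal curvature at the contact circle is $O(1/r)$; note that near the pole $\omega_{yy}(r(t),t)<0$, so the extra term has the unfavourable sign and is not harmless. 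The closing step, ``a comparison bounds the profile curvature by $C/r(t)^2$'', is unsupported: the second fundamental form is not directly amenable to barrier arguments and you specify no comparison surfaces or local estimates (Stahl's boundary estimates give curvature bounds from curvature bounds plus geometry, not from $r(t)$ alone). As you concede in your ``main obstacle'' paragraph, these two missing estimates are mutually dependent: the Hessian bound needed to run the ODE is a boundary curvature bound of the very kind the ODE is supposed to deliver. There is also a calibration problem: if your assumed boundary bound $|A|\lesssim 1/r$ and $r^2\gtrsim T-t$ both held, the boundary contribution would satisfy the stronger Type I bound $\|A\|^2\lesssim (T-t)^{-1}$, so the claimed $(T-t)^{-2}$ rate must be carried by the interior, which your sketch does not address. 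The Type I lower bound you include is not part of the statement and, in the free boundary setting, the evolution inequality for $\|A\|_\infty^2$ itself requires boundary control. In short: finite-time singularity at $y=0$ is argued along the right lines, but the curvature rate remains unestablished.
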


Pinching off at the North pole or South pole is in fact the only kind of singularity that can occur in this setting.
Once we rule this out, there is no further obstacle to global existence.
This was treated for general rotationally symmetric support hypersurfaces $\Sigma$ in \cite{vmwheeler2012rotsym}.
The following result is a strengthening of the global existence theorem from \cite{vmwheeler2012rotsym} in the special case where the support surface $\Sigma$ is a sphere.
The improvement follows by using pieces of catenoids as comparison hypersurfaces.

\begin{thm}[Global existence]
\label{thmlongtimeexistencecatenoid}
Let ${\omega}_0:(r_0,R)\rightarrow \R$ be a smooth function satisfying the boundary conditions in \eqref{sphereradiallysymmetricgraph}.
Suppose there exist constants $d_i$, $C_i$, ${\varepsilon}_i \in [0,1)$ and $y_i\in(0,r_0)$ for $i=1,2$ such that
\begin{align}
-d_1 \arccosh(C_1y)-{\varepsilon}_1 \ <\ {\omega}_0(y)\ <\ d_2
\arccosh(C_2y)+{\varepsilon}_2, \text{   }\forall
y\in[r_0,R],\label{initialheightcatenoid}
\end{align}
\begin{align} d_i \arccosh(C_iy_i)+{\varepsilon}_i\ =\
\sqrt{1-y_i^2}, \label{catenoidintersectionsphere}
\end{align}
and
\begin{align}
0\ \leq \ (1-d_i^2)c_i^2y_i^2-c_i^2y_i^4 -1+y_i^2\,.
\label{catenoidsphereangle}
\end{align}
Then there exists a global solution $\omega:(r_t,R)\times[0,\infty)\rightarrow\R$ to \eqref{sphereradiallysymmetricgraph} with initial data $\omega_0$.
Furthermore, as $t\rightarrow\infty$ the hypersurfaces generated by the rotation of $\omega$ converge to a piece of a minimal hypersurface.
In particular, if $h_0=0$ then the hypersurfaces converge to the flat annulus around the sphere $\Sigma$.
\end{thm}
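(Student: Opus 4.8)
The plan is to use the catenoid pieces singled out by \eqref{initialheightcatenoid}--\eqref{catenoidsphereangle} as static barriers for the scalar flow \eqref{sphereradiallysymmetricgraph}, and then to rule out finite--time blowup via the singularity dichotomy behind Theorem \ref{thmcurvaturesingularity}. Let $\mathcal C^\pm$ denote the rotationally symmetric minimal hypersurfaces whose profile curves are (up to sign) $y\mapsto d_i\arccosh(C_iy)+\varepsilon_i$ on $[y_i,R]$; being minimal they are stationary under mean curvature flow. The three hypotheses play complementary roles: \eqref{initialheightcatenoid} says exactly that $\mathcal C^-$ lies strictly below, and $\mathcal C^+$ strictly above, the initial graph on $[r_0,R]$ --- and, evaluated at $y=R$ where $\omega_0(R)=h_0$, that they strictly straddle the Dirichlet datum; \eqref{catenoidintersectionsphere} places the inner boundary circle of each $\mathcal C^\pm$ exactly on the unit support sphere $\Sigma$, at radius $y_i>0$; and \eqref{catenoidsphereangle} is the contact--angle condition ensuring that each catenoid meets $\Sigma$ no more steeply than perpendicularly, so that the comparison at the moving free boundary is one--sided in the favourable direction.

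The heart of the matter is the comparison estimate: the graph $\omega(\cdot,t)$ stays between the two catenoid profiles for all $t\in[0,T)$ on its domain $(r_t,R)$. I would argue by contradiction. If $t_0\in(0,T)$ is the first contact time, say with the upper profile at radius $y_0$, then interior contact $y_0\in(r_{t_0},R)$ is impossible by the strong maximum principle applied to the quasilinear equation solved by the difference, which is uniformly parabolic on the relevant compact subinterval; contact at $y_0=R$ is impossible because there $\omega\equiv h_0$ stays strictly below the upper catenoid value; and contact at the free--boundary endpoint $y_0=r_{t_0}$ is impossible because there the profile of $M_{t_0}$ meets $\Sigma$ perpendicularly (Neumann condition in \eqref{immersion}) while the catenoid meets $\Sigma$ at a strictly shallower or equal angle by \eqref{catenoidsphereangle}, so that a Hopf--lemma comparison of one--sided $y$--derivatives yields a sign contradiction. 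Hence the sandwiching is preserved up to $T$.

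Two consequences follow. First, a uniform height bound $|\omega|\le\max_i(d_i\arccosh(C_iR)+\varepsilon_i)$. Second --- and this is what forces global existence --- since $M_t$ is trapped in the region enclosed by $\Sigma$, the two catenoids and the Dirichlet sphere, and the catenoids rest on $\Sigma$ at radii $y_i>0$ while the region between each catenoid and $\Sigma$ closes up at $y=y_i$ (by \eqref{catenoidintersectionsphere}), the free--boundary radius cannot drop below $y_*:=\min\{y_1,y_2\}>0$. Thus $M_t$ never reaches the North or South pole of $\Sigma$, and since --- as recorded before Theorem \ref{thmcurvaturesingularity} and established in \cite{vmwheeler2012rotsym} --- a pole pinch is the only singularity model available in this setting, $\|A\|_\infty$ must remain bounded on $[0,T)$ for every finite $T$; by the continuation criterion behind Theorem \ref{thmshortimeexsitence} the maximal time is $T=\infty$. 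I expect the boundary--endpoint case of the comparison to be the genuine obstacle: reconciling the perpendicular contact of the moving surface with a sphere of radius $1$ (hence of definite curvature) against the catenoid's contact angle is exactly where \eqref{catenoidsphereangle} has to be exploited in full, and keeping track of the moving free boundary makes the Hopf argument more delicate than in the fixed--boundary case.

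Finally, the asymptotics. The uniform $C^0$ bound from the barriers, together with interior gradient estimates of Ecker--Huisken type and the boundary gradient and curvature estimates of \cite{vmwheeler2012rotsym} (now available since $r_t\ge y_*>0$), yield uniform higher--order estimates; hence along any sequence $t_j\to\infty$ a subsequence of $\omega(\cdot,t_j)$ converges smoothly to a static solution of \eqref{sphereradiallysymmetricgraph}, i.e.\ to a rotationally symmetric minimal hypersurface meeting $\Sigma$ perpendicularly and attaining height $h_0$ at $y=R$ --- a piece of a minimal hypersurface. Monotonicity of area along the flow, $\frac{d}{dt}\operatorname{Area}(M_t)=-\int_{M_t}H^2\,d\mu\le 0$ (the free--boundary contribution vanishing by perpendicularity), gives $\int_0^\infty\!\int_{M_t}H^2\,d\mu\,dt<\infty$, which together with the uniform estimates upgrades the subconvergence to full convergence to a single minimal limit. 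When $h_0=0$, the only rotationally symmetric minimal hypersurface meeting the unit sphere perpendicularly at height zero and lying in the region to which the flow is confined is the flat annulus $\{x_{n+1}=0\}\cap\{|(x_1,\dots,x_n)|\ge 1\}$, which is therefore the limit.
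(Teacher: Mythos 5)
Your proposal is correct and follows essentially the same route as the paper: the catenoid pieces are used as stationary barriers, with \eqref{catenoidintersectionsphere} placing them on the sphere and \eqref{catenoidsphereangle} giving contact angle at most ninety degrees so that the comparison principle (Huisken's in the interior, its free-boundary/Hopf-lemma version at the Neumann boundary, as in \cite{thesisvulcanov}) traps the evolving graphs between them and keeps the free boundary away from the poles. The paper then, exactly as you do, invokes the rotationally symmetric height and gradient estimates (from \cite{vmwheeler2012rotsym} and interior estimates) for global existence and the standard area-decay argument for convergence to a minimal piece, the flat annulus when $h_0=0$.
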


Relaxing the continuous symmetry imposed by rotationally symmetric initial data to a discrete reflective symmetry causes additional difficulty through the possibility of boundary \emph{tilt}, where the gradient of the
graph becomes unbounded at the boundary with bounded curvature.
We show that while the curvature is bounded, that is for all $t<T$, initially reflectively symmetric graphs remain graphical under the mean curvature flow.

The key idea is to employ Killing vector fields of Euclidean space.
In order to state the result precisely we require some notation.
In $\R^{n+1}$ there are $n(n+1)/2$ Killing vector fields of rotation and $n+1$ translations from which $e_{n+1}$ is one.
We denote the first $n$ of the Killing vector fields of rotation by
\begin{align*}
K_i(x_1,\ldots,x_{n+1})
 = e_ix_{n+1} - x_ie_{n+1}
\end{align*}
for each $i=1,\ldots,n$.
Let us define $\xi$ to be the vector field tangential to the sphere that generates the vertical great circles passing through the North and South Poles.
That is
\begin{align}
\label{xidefn}
\xi(x_1,\ldots,x_{n+1})
 & = \  \bigg(-x_1x_{n+1},\ldots,-x_nx_{n+1},\sum_{i=1}^n x_i^2\bigg)\notag\\
 & = \ - \ \sum_{i=1}^{n} x_i K_i .
\end{align}
One can see that
\[
\IP{\nu^{M_t}}{\xi}=\sum_{i=1}^n-x_i\IP{\nu^{M_t}}{K_i}
\]
and so the graph condition \eqref{graphcondition} in $\xi$ follows from the following set of conditions
\begin{align}
&\IP{\nu^{M_t}}{K_i}>0 \text{ when }x_i<0,\notag\\
&\IP{\nu^{M_t}}{K_i}<0 \text{ when }x_i>0,\label{initialsphere}\\
&\IP{\nu^{M_t}}{K_i}=0 \text{ when }x_i=0,\notag
\end{align}
for all $i=1,\ldots,n$.
The strategy is to take our graph to be initially reflectively symmetric and impose the stronger set of conditions \eqref{initialsphere}, implying the positivity of $\IP{\nu^{M_t}}{\xi}$, and show that they are
preserved.

\begin{thm}[Preservation of graphicality]
\label{thmLongsphere}
Let $F_t$ satisfy \eqref{immersion} for $t\in[0,T)$ and be reflectively symmetric over the planes $\{x_i=0\}$ for all $i=1,\ldots, n$.
Suppose that the initial immersion $M_0=F_0(M^n)$ satisfies conditions \eqref{initialsphere}, is initially graphical \eqref{graphcondition} for $\zeta=e_{n+1}$, and that the height bound $|\IP{F_0}{e_{n+1}}|\leq 1$ is
satisfied.
Then
\begin{align*}
\IP{\nu^{M_t}}{e_{n+1}}\ > \ 0
\end{align*}
for all $t\in[0,T)$.
That is, the solution remains graphical for all times of existence.
\end{thm}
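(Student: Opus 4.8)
The plan is to run maximum-principle arguments for the normal components $\IP{\nu^{M_t}}{K}$ as $K$ runs over the rotational Killing fields $K_1,\dots,K_n$ and the translation $e_{n+1}$, and to shuttle information between the two boundaries by means of the vector field $\xi$ of \eqref{xidefn}. The basic mechanism is that, for \emph{any} Killing field $K$ of $\R^{n+1}$, the one-parameter group of ambient isometries it generates sends solutions of the flow equation in \eqref{immersion} to solutions; hence $u^K:=\IP{\nu^{M_t}}{K}$ satisfies the Jacobi-type equation
\[
\Big(\frac{\partial}{\partial t}-\Delta\Big)u^K\ =\ \vn{A}^2\,u^K\qquad\text{on }M^n\times[0,T),
\]
in particular for $u:=\IP{\nu^{M_t}}{e_{n+1}}$ and for each $v_i:=\IP{\nu^{M_t}}{K_i}$. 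Separately, the height function $\IP{F}{e_{n+1}}$ solves the heat equation $(\partial_t-\Delta)\IP{F}{e_{n+1}}=0$; since $|\IP{F}{e_{n+1}}|\le1$ holds automatically on $\partial_NM_t\subset\Sigma$ and equals $|h_0|\le1$ on $\partial_DM_t$, the maximum principle preserves the bound $|\IP{F_t}{e_{n+1}}|\le1$. A Hopf argument for the height function at $\partial_NM_t$ (where the outward conormal $\nu^{\Sigma}$ points along $-e_{n+1}$ at the North pole), the strong maximum principle in the interior, and the fact that $\partial_DM_t$ lies at distance $\sqrt{R^2+h_0^2}>1$ from the origin, then show that $M_t$ avoids the two poles of $\Sigma$ for all $t\in[0,T)$.

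Next I would record the boundary behaviour. On $\{x_i=0\}\cap M_t$ the reflective symmetry forces $\nu^{M_t}\in\{x_i=0\}$, so $\IP{\nu^{M_t}}{e_i}=0$ and therefore $v_i=x_{n+1}\IP{\nu^{M_t}}{e_i}-x_i\IP{\nu^{M_t}}{e_{n+1}}=0$ there. Along $\partial_NM_t$ each $K_i$ is tangent to $\Sigma$ (it generates a rotation fixing the sphere), so the group it generates preserves the contact and Neumann conditions; linearising those conditions, and using that along a free boundary on an umbilic hypersurface the mixed second fundamental form $A(\,\cdot\,,\eta)$ vanishes on vectors tangent to $\partial_NM_t$, yields the homogeneous Robin condition $\nabla_\eta v_i=-v_i$ on $\partial_NM_t$, where $\eta$ denotes the outward conormal (here equal to $\nu^{\Sigma}$) — the sign being exactly the one favourable for the maximum principle, which is where it matters that the $M_t$ lie \emph{outside} the sphere. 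Moreover, on $\partial_NM_t$ the Neumann condition $\IP{\nu^{M_t}}{\nu^{\Sigma}}=0$ together with $|F|=1$ gives $\sum_{i=1}^n x_i\IP{\nu^{M_t}}{e_i}=-x_{n+1}\IP{\nu^{M_t}}{e_{n+1}}$, so that \eqref{xidefn} collapses to
\[
\IP{\nu^{M_t}}{\xi}\ =\ \IP{\nu^{M_t}}{e_{n+1}}\qquad\text{on }\partial_NM_t .
\]
Along $\partial_DM_t$ the circle $\partial B_R(O_n^{h_0})$ is fixed, so (here $n\ge2$ is used) both $\nu^{M_t}$ and the conormal lie in the fixed two-plane spanned by $x':=(x_1,\dots,x_n,0)$ and $e_{n+1}$, the identity $H\equiv0$ holds there and forces a relation between $A(\eta,\eta)$ and $\IP{\nu^{M_t}}{x'}$ dictated by the curvature $(n-1)/R$ of the circle, and a short computation gives $v_i=x_i\big(\tfrac{h_0}{R}\IP{\nu^{M_t}}{x'/|x'|}-\IP{\nu^{M_t}}{e_{n+1}}\big)$; in particular the sign demanded at $\partial_DM_t$ by \eqref{initialsphere} is the single inequality $\IP{\nu^{M_t}}{R\,e_{n+1}-\tfrac{h_0}{R}x'}>0$, i.e.\ that $\nu^{M_t}$ point positively along the conormal of $\partial B_R(O_n^{h_0})$ inside the sphere of radius $\sqrt{R^2+h_0^2}$.

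The argument then has two maximum-principle stages. First I would preserve \eqref{initialsphere}: on each half-hypersurface $\{x_i\ge0\}\cap M_t$, $v_i$ satisfies the linear equation above, vanishes on $\{x_i=0\}$, obeys the homogeneous Robin condition of favourable sign on $\partial_NM_t$, and is required to obey the single sign inequality on $\partial_DM_t$; the strong maximum principle and the Hopf lemma then keep $v_i<0$ in the interior, and the half $\{x_i\le0\}$ is symmetric. Consequently $\IP{\nu^{M_t}}{\xi}=-\sum_{i=1}^n x_iv_i\ge0$, strictly wherever some $x_i\ne0$. Second, suppose for contradiction that $t_0\in(0,T)$ is the first time graphicality degenerates, at a point $p_0\in M_{t_0}$ with $u(p_0)=0$. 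The interior case is impossible because $u$ is then a nonnegative solution of the above equation up to $t_0$ and the strong minimum principle would force $u\equiv0$, against \eqref{graphcondition} at $t=0$. If $p_0\in\partial_NM_{t_0}$, then $p_0$ is not a pole, so some $x_i(p_0)\ne0$, and the preserved strict form of \eqref{initialsphere} gives $u(p_0)=\IP{\nu^{M_t}}{\xi}(p_0)=-\sum_{i=1}^n x_iv_i(p_0)>0$, a contradiction. The corner points of $\overline{\partial_NM_{t_0}}\cap\{x_i=0\}$ and of $\overline{\partial_DM_{t_0}}\cap\{x_i=0\}$ are treated by unfolding across the symmetry planes. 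This leaves only the case $p_0\in\partial_DM_{t_0}$.

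The main obstacle is precisely this last case, the fixed Dirichlet boundary. Unlike $\partial_NM_t$ — where the rotational Killing fields are tangent to $\Sigma$ and therefore produce a genuine linearised (Robin) boundary condition for $v_i$ — the circle $\partial B_R(O_n^{h_0})$ is moved by every $K_i$ and by $e_{n+1}$, so there is no boundary condition for the $u^K$ there coming from symmetry. One must argue directly: at $p_0\in\partial_DM_{t_0}$ the normal $\nu^{M_t}$ is horizontal and the conormal $\eta$ vertical, and a contradiction has to be extracted from the interplay of (i) the fixed position of $\partial B_R(O_n^{h_0})$, (ii) $H\equiv0$ on $\partial_DM_t$ constraining $A(\eta,\eta)$ through the round geometry of the circle, and (iii) the preserved height bound $|x_{n+1}|\le1$, which forbids the graph from folding over the circle out of the slab — equivalently, one must rule out $\IP{\nu^{M_t}}{R\,e_{n+1}-\tfrac{h_0}{R}x'}$ reaching zero, both to keep \eqref{initialsphere} and to keep $u>0$ at the Dirichlet boundary. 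Making this rigorous, in effect a no-tilt estimate at a fixed boundary, is the technical heart of the theorem, and is where the hypotheses $n\ge2$ and $|\IP{F_0}{e_{n+1}}|\le1$ are genuinely used.
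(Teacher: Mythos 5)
Your interior and Neumann-boundary machinery matches the paper's: the Jacobi-type equation $(\partial_t-\Delta)\IP{\nu^{M_t}}{K}=|A|^2\IP{\nu^{M_t}}{K}$ is Proposition \ref{killingevolution}, the identity $\IP{\nu^{M_t}}{\xi}=\IP{\nu^{M_t}}{e_{n+1}}$ on $\Sigma$ is Proposition \ref{propTilt}, the vanishing of $A^{M_t}(\tau_s,\nu^\Sigma)$ from umbilicity of $\Sigma$ (equivalently your Robin condition $\nabla_{\nu^\Sigma}s_i=-s_i$) is exactly how Propositions \ref{lemtiltsphere} and \ref{propKilling} kill a first zero of $s_i$ on $\partial_N M_t$, and the restriction to the cone $\Q^+$ with $s_i\equiv0$ on the reflection planes is the paper's setup as well. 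The problem is that you stop precisely where a proof is still owed: your last paragraph concedes that the Dirichlet-boundary case --- both the sign inequality for $s_i$ on $\partial_D M_t$ needed in your first maximum-principle stage, and ruling out $u=\IP{\nu^{M_t}}{e_{n+1}}=0$ at $\partial_D M_{t_0}$ in your second stage --- is ``the technical heart'' and is left as a description of what would have to be done. As written, the maximum principle for $v_i$ on $\Q^+\cap M_t$ has an uncontrolled boundary portion, so the conclusion $\IP{\nu^{M_t}}{\xi}>0$, and with it the whole contradiction argument, does not close. That is a genuine gap, not a presentational one.

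It is also a gap the paper closes by much softer means than the ``no-tilt estimate at a fixed boundary'' you anticipate. The results feeding Theorem \ref{thmLongsphere} are stated for RGMCF, i.e.\ with $h_0=0$, so $\partial_D M_t\subset\{x_{n+1}=0\}$ and Lemma \ref{propInitialD} gives $s_i=-x_i\nu_{n+1}$ there: conditions \eqref{initialsphere} on the Dirichlet boundary are \emph{literally equivalent} to $\nu_{n+1}>0$ there. Positivity of $\nu_{n+1}$ along the fixed Dirichlet boundary is then the standard boundary gradient estimate obtained from stationary barriers at a fixed smooth Dirichlet boundary (invoked in the proof of Proposition \ref{propKilling}, with the height bound available from Lemma \ref{LMuniformheight}); no interplay with $H\equiv0$ on $\partial_D M_t$ or with the curvature of the circle is needed. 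Your general-$h_0$ formula $v_i=x_i\big(\tfrac{h_0}{R}\IP{\nu^{M_t}}{x'/|x'|}-\nu_{n+1}\big)$ is correct but addresses a harder problem than the one the theorem is set up for, and your closing claim that the hypothesis $|\IP{F_0}{e_{n+1}}|\le1$ is ``genuinely used'' at the Dirichlet boundary is misplaced: in the paper its role is to keep the free boundary away from the poles of $\Sigma$, where $\xi$ degenerates and the Neumann-boundary argument would fail. To repair your write-up, either impose $h_0=0$ and quote the Dirichlet barrier/gradient estimate, or supply the boundary gradient estimate for general $|h_0|<1$ yourself; without one of these the proposal is incomplete.
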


While we are able to control the mean curvature along the flow under the initial conditions in Theorem \ref{thmLongsphere}, we are not able to control the full second fundamental form, and this prevents us from obtaining
true global existence.
To finish the paper we present the following global existence theorem, which holds without any of the symmetry conditions imposed above.
Instead, we require the initial data to be graphical and have non-positive (or non-negative) mean curvature.
The precise statement is as follows.

\begin{thm}[Global existence for mean concave (mean convex) initial surfaces]
\label{thmSombrero}
Let $F_t = F(M^2,t)$ satisfy \eqref{immersion} for $t\in[0,T)$ with initially graphical mean concave (mean convex) data satisfying the height bound $0 < \IP{F_0}{e_{3}} < 1$ (or $-1 < \IP{F_0}{e_{3}} < 0)$).
Then the solution exists for all time and converges to a piece of a minimal surface.
\end{thm}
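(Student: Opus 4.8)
The plan is to confine the flow to a fixed compact region by an a priori height bound — in which, crucially, the free boundary stays uniformly off the poles of $\Sigma$ — then to combine the interior estimates of Ecker--Huisken with the free-boundary estimates of Stahl and a barrier argument at the Dirichlet edge to rule out finite-time singularities, and finally to extract the minimal limit from the monotonicity that mean concavity forces. Reflection in $\{x_{n+1}=0\}$ is an isometry of $\R^{n+1}$ fixing $\Sigma$ and preserving \eqref{immersion} while interchanging the mean concave configuration $0<\IP{F_0}{e_{n+1}}<1$ with the mean convex one $-1<\IP{F_0}{e_{n+1}}<0$, so it suffices to treat the mean concave case $H\le 0$. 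First I would show $H\le 0$ is preserved. Along the flow $H$ satisfies $(\partial_t-\Delta)H=|A|^2H$; on $\partial_D M$ the Dirichlet and compatibility conditions give $\partial_t F=-H\nu=0$, hence $H\equiv 0$ there, and on $\partial_N M$ the perpendicular contact with the umbilic $\Sigma$ yields (Stahl \cite{thesisstahl}) a Robin-type condition $\nabla_\eta H=c\,H$ whose coefficient $c$ is negative — the sign favorable for the maximum principle — because $\Sigma$ has negative curvature in our conventions. On each $[0,t']\subset[0,T)$ the coefficient $|A|^2$ is bounded, so the parabolic maximum principle applied to $e^{-\lambda t}H$ with $\lambda$ large preserves $H\le 0$; letting $t'\uparrow T$ gives $H\le 0$ on $[0,T)$.

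Second, I would prove the height bound. The coordinate function $u:=\IP{F}{e_{n+1}}$ satisfies $(\partial_t-\Delta)u=0$. Its space-time extrema lie on the parabolic boundary; at a Neumann-boundary extremum the outward conormal $\eta$ of the exterior annulus coincides with the inner unit normal $\nu^\Sigma=-F$ of the unit sphere, so $\nabla_\eta u=-u$ there, which forces $u\le 0$ at a maximum and $u\ge 0$ at a minimum. Combining this with $u=h_0$ on $\partial_D M$ and with $0<h_0<1$, $0<\inf u_0\le\sup u_0<1$ produces fixed constants $0<c_0\le C_0<1$ with $c_0\le u(\cdot,t)\le C_0$ on $[0,T)$. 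Hence $M_t$ remains in the slab $\{c_0\le x_{n+1}\le C_0\}$, its Dirichlet boundary is pinned to $\{|x'|=R\}$, and its free boundary lies on the compact subannulus of $\Sigma$ at heights in $[c_0,C_0]$, bounded away from both poles; together with graphicality (below) $M_t$ is confined to a fixed compact region of $\R^{n+1}$ and its projection is an annulus of inner radius at least $\sqrt{1-C_0^2}>0$.

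The heart of the argument is the uniform gradient bound up to the boundary, equivalently preservation of graphicality. The hyperplane $\{x_{n+1}=0\}$ is a static minimal hypersurface meeting $\Sigma$ perpendicularly with $M_0$ strictly above it, so by the avoidance principle $M_t$ stays above it; placing below height $1$ a static catenoidal barrier through $\partial B_R(O_n^{h_0})$ as in the proof of Theorem \ref{thmlongtimeexistencecatenoid}, and using that mean concavity makes $M_t$ a subsolution, traps $M_t$ between the two. Then the Ecker--Huisken interior gradient estimate \cite{ecker1991ieh} (applicable since $M_t$ is graphical and lies in a fixed ball) bounds $\IP{\nu^{M_t}}{e_{n+1}}$ away from $\partial M_t$; at the fixed Dirichlet wall $\{|x'|=R\}$ comparison with spherical-cap and catenoidal boundary barriers agreeing with $M_t$ along $\partial_D M$ — which fit below height $1$ because $\sup u_0<1$ leaves room, and for which mean concavity supplies the required one-sided comparison — bounds the gradient there; and at the free boundary Stahl's local estimates \cite{thesisstahl}, or the reflection technique of Buckland--Koeller \cite{buckland2005mcf,koeller2007singular}, bound the gradient since $M_t$ avoids the focal point of $\Sigma$ (which lies inside the sphere) and its contact set stays off the poles. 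Thus $\IP{\nu^{M_t}}{e_{n+1}}\ge\varepsilon_0>0$ uniformly on $[0,T)$ and $M_t$ stays graphical; the graph equation is then uniformly parabolic on a domain of fixed geometry, so interior and boundary Schauder theory \cite{lieberman1996second} together with the higher-order Ecker--Huisken estimates give uniform bounds on $|A|$ and all its derivatives on $[0,T)$. By the continuation criterion (Theorem \ref{thmshortimeexsitence}, \cite{thesisstahl}) this forces $T=\infty$.

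Finally, for convergence: the first variation of area under \eqref{immersion} gives $\tfrac{d}{dt}\,\mathrm{Area}(M_t)=-\int_{M_t}H^2$ (the boundary terms vanish, using $\IP{\nu}{\eta}=0$ on $\partial_N M$ and $H\equiv 0$ on $\partial_D M$), so $\int_0^\infty\!\int_{M_t}H^2<\infty$ and $\int_{M_{t_k}}H^2\to 0$ along some $t_k\to\infty$; the uniform bounds plus interpolation give smooth subsequential convergence $M_{t_k}\to M_\infty$ with $H_\infty\equiv 0$, so $M_\infty$ is a minimal surface with the prescribed free and Dirichlet boundary, and since $\partial_t u=-H\IP{\nu^{M_t}}{e_{n+1}}\ge 0$ the height is pointwise nondecreasing, which upgrades this to convergence of the whole flow to $M_\infty$. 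I expect the main obstacle to be precisely the uniform gradient bound up to the boundary in the third paragraph — simultaneously ruling out boundary \emph{tilt} at the Dirichlet edge and pinch-off of the contact curve at the poles — and it is exactly here that mean concavity is indispensable: it turns $M_t$ into a global subsolution, furnishing both the monotone confinement between the plane and the catenoidal barrier and the one-sided comparison for the boundary barriers, whereas without it one controls $H$ but not all of $A$ (cf.\ Theorem \ref{thmLongsphere}).
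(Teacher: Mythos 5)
Your overall skeleton matches the paper's: preserve $H\le 0$ (interior maximum principle plus Stahl's identity $\nabla_{\nu^\Sigma}H=HA^\Sigma(\nu^{M_t},\nu^{M_t})=-H$ at the free boundary, Proposition \ref{sigmacurvaturerelated}), prove a height bound keeping the contact set off the poles, prove a uniform gradient bound, bootstrap via uniform parabolicity of the scalar equation, and identify the limit from $\int_0^\infty\int H^2\,d\mu\,dt<\infty$. The first, second and last of these steps are essentially the paper's Lemmas \ref{LMconcavitypreserved}, \ref{LMuniformheight} and \ref{LMminimal} (your monotone-height upgrade of the convergence is a nice, if minor, addition; your claimed uniform positive lower bound $c_0>0$ for the height is not actually justified by your Hopf argument, but only $u>0$ at the contact set is needed, which the flat plane barrier gives).

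The genuine gap is the step you yourself flag as the heart of the matter: the uniform gradient bound at the \emph{free} boundary. You delegate it to ``Stahl's local estimates or the reflection technique of Buckland--Koeller, since $M_t$ avoids the focal point of $\Sigma$ and its contact set stays off the poles.'' None of these delivers a tilt exclusion: Stahl's and Koeller's estimates are curvature/regularity estimates that presuppose exactly the kind of control you are trying to establish, Buckland's reflection yields a monotonicity formula rather than graphicality preservation, confinement in a slab between the plane and a catenoid does not bound $\IP{\nu^{M_t}}{e_{n+1}}$ from below at $\partial_N M_t$, and the Ecker--Huisken estimate is interior. Tellingly, your argument never uses $n=2$, whereas the theorem is stated for surfaces precisely because the paper's free-boundary gradient bound (Lemma \ref{LMuniformgradient}) needs it: at a putative new boundary minimum of $s=\IP{\nu^{M_t}}{e_3}$ one combines the vanishing of the tangential derivatives of $s$ with Stahl's relation $A^{M_t}(\tau_1,\nu^\Sigma)=-A^\Sigma(\tau_1,\nu^{M_t})=0$ to get $A^{M_t}(\tau_1,\tau_1)\IP{\tau_1}{e_3}=0$, argues $\IP{\tau_1}{e_3}\ne 0$ (this is where two dimensions enter), hence $A^{M_t}(\tau_1,\tau_1)=0$, and then $\nabla_{\nu^\Sigma}s=H\IP{\nu^\Sigma}{e_3}\ge 0$ by mean concavity together with $\IP{\nu^\Sigma}{e_3}<0$ (preserved by the plane barrier), contradicting the Hopf lemma. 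This Hopf-lemma computation is the missing key lemma; without it, or a substitute of equal strength, your proof does not close, since boundary tilt is not excluded by staying away from the poles or the focal point.
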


The catenoid comparison method in the proof of Theorem
\ref{thmlongtimeexistencecatenoid} allows one to weaken the initial height
bound in Theorems \ref{thmLongsphere} and \ref{thmSombrero} as mentioned in
Section 2 below.
For clarity of exposition we have used the more restrictive $0 <
\IP{F_0}{e_{3}} < 1$.

The paper is organised as follows.
Rotationally symmetric graphs are considered in Section 2, where Theorem 1.3 is proved.
Section 3 is concerned with the case of reflectively symmetric graphs.
There our goal is to prove Theorem 1.4.
Finally, in Section 4 we consider mean concave (mean convex) graphs, and prove Theorem 1.5.
We have attempted to make each section self-contained.

\section{Rotationally symmetric graphs}
%{{{

Theorems on rotationally symmetric graphs outside general rotationally symmetric contact surfaces can be found in \cite{vmwheeler2012rotsym}.
Results particular to the case we consider here, where $\Sigma$ is the standard $n$-sphere, can be inferred from these more general results.
Here we only include the details which differ from the proofs found in \cite{vmwheeler2012rotsym}.
Long time existence for \eqref{sphereradiallysymmetricgraph} is obtained from uniform height and gradient bounds.
The latter follows from standard interior estimates and the rotational symmetry.

On the boundary, we need to exclude behaviour in which the evolving graphs reach points where the sphere has a horizontal point, so the North and South Pole.
Usually this is achieved by beginning the flow with a graph such that in the region between the maximal and minimal height value there is no point where $\Sigma$ is horizontal.
Then we preserve the height of the graphs for all times between the initial values.
The preservation part of the argument is still valid but here we allow initial data which have a height above (or below) the critical value $1$ (or $-1$).
To prove that the graphs do not move towards the North or South Pole of the sphere we apply the comparison principle with two pieces of a minimal surface.

\begin{proof}[Proof of Theorem \ref{thmlongtimeexistencecatenoid}.]
Suppose we are in the setting of Theorem \ref{thmlongtimeexistencecatenoid}.
By \eqref{initialheightcatenoid}, the initial graph is above and below two pieces of catenoids defined on $[y_i,\infty)$.
The catenoids touch the sphere below and above the initial graph respectively.
Condition \eqref{catenoidintersectionsphere} implies that the lower catenoid meets the sphere at $-\sqrt{1-y_1^2}$ and that the upper catenoid meets the sphere at $\sqrt{1-y_2^2}$.
The angle between the sphere and the two pieces is given by right hand side of \eqref{catenoidsphereangle}.
It follows from this relation that the angle is less than or equal to ninety degrees.
The initial graph starts between these two catenoids and due to the choice of angle at the intersection of the sphere with the two catenoids, the comparison principle shows that for all time the family of graphs remains
contained between the upper and lower catenoids.

In particular, if the catenoids meet the sphere at precisely ninety degrees then one can prove this by using the comparison principle for hypersurfaces evolving by mean curvature flow from Huisken (see
\cite{huisken1986cch}). Details of the modifications required for the free boundary setting can be found in \cite[Theorem 2.10]{thesisvulcanov}).
If the contact angle is strictly less than ninety degrees then the hypersurfaces will meet for the first time in the interior, a case excluded by Huisken's comparison principle \cite{huisken1986cch}.
\end{proof}

This theorem can be used to allow the initial graph to attains heights greater than that of the sphere.
We modestly demonstrate this with an explicit construction where the initial graph reaches $1$ or $-1$.

\begin{cor}
Let $\omega$ satisfy \eqref{sphereradiallysymmetricgraph} with $|{\omega}_0|\leq 1$.
Then Theorem \ref{thmlongtimeexistencecatenoid} is applicable.
\end{cor}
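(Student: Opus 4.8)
The plan is to exhibit explicit constants $d_i,C_i,\varepsilon_i\in[0,1)$ and points $y_i\in(0,r_0)$, $i=1,2$, satisfying the three hypotheses \eqref{initialheightcatenoid}, \eqref{catenoidintersectionsphere}, \eqref{catenoidsphereangle} of Theorem \ref{thmlongtimeexistencecatenoid}; its conclusion then applies directly. The conditions with index $i=1$ are obtained from those with index $i=2$ by the reflection $x_{n+1}\mapsto-x_{n+1}$, which fixes the unit sphere $\Sigma$ and interchanges the upper and lower comparison catenoids, and the assumption $|\omega_0|\le1$ gives $\omega_0\le1$ and $-\omega_0\le1$ on $[r_0,R]$ simultaneously. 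Hence it suffices to produce a single catenoid graph $g(y)=d\arccosh(y/d)+\varepsilon$ (so $C=1/d$), defined for $y\ge d$, with: (a) $g(y_0)=\sqrt{1-y_0^2}$ for some $y_0\in(0,r_0)$, so that it meets $\Sigma$ along the circle $y=y_0$, $x_{n+1}=\sqrt{1-y_0^2}$; (b) equality in \eqref{catenoidsphereangle} at $y_0$; and (c) $g(r_0)>1$. One then takes $d_1=d_2=d$, $C_1=C_2=1/d$, $\varepsilon_1=\varepsilon_2=\varepsilon$, $y_1=y_2=y_0$, using $g$ as the upper and $-g$ as the lower barrier in \eqref{initialheightcatenoid}.

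For the construction I would take the waist radius $d$ small. Given $y_0\in(0,r_0)$, set $d:=y_0\sqrt{1-y_0^2}$ and $C:=1/d$; then $C^2y_0^2(1-y_0^2)=1$, so since $dC=1$ the right-hand side of \eqref{catenoidsphereangle} vanishes at $y_0$ — the catenoid meets $\Sigma$ at exactly ninety degrees — which gives (b). Next define $\varepsilon:=\sqrt{1-y_0^2}-d\arccosh(y_0/d)$; here $y_0/d=(1-y_0^2)^{-1/2}>1$, so $\arccosh(y_0/d)$ is well defined, $\varepsilon<\sqrt{1-y_0^2}<1$, and $\varepsilon=\sqrt{1-y_0^2}\,\bigl(1-y_0\arccosh((1-y_0^2)^{-1/2})\bigr)\to1$ as $y_0\to0^+$, so $\varepsilon\in[0,1)$ once $y_0$ is small; condition (a) holds by the definition of $\varepsilon$. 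Since $g$ is strictly increasing, (c) is the inequality
\begin{align*}
d\bigl[\arccosh(r_0/d)-\arccosh(y_0/d)\bigr]\ >\ 1-\sqrt{1-y_0^2}\,.
\end{align*}
As $y_0\to0^+$ the right-hand side is $O(y_0^2)$, while on the left $d\sim y_0$, $\arccosh(r_0/d)\sim\log(2r_0/y_0)\to\infty$ and $\arccosh(y_0/d)=O(y_0)$, so the left-hand side is of order $y_0\log(1/y_0)$, which dominates. Hence (c) holds for all sufficiently small $y_0$, and then $g(y)\ge g(r_0)>1\ge|\omega_0(y)|$ on $[r_0,R]$, whence $-g<\omega_0<g$ there — that is, \eqref{initialheightcatenoid}. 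Fixing $y_0$ small enough to meet the finitely many smallness requirements above completes the construction, so Theorem \ref{thmlongtimeexistencecatenoid} is applicable.

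The step that genuinely needs care is (c). The naive attempt is to use a fat catenoid, but for $d$ bounded away from $0$ the graph $d\arccosh(y/d)$ stays well below height $1$ over the whole relevant range $y<1$, so no such catenoid can dominate an initial graph that climbs to height $1$; one must instead exploit the blow-up of the slope of $g$ at its waist, which is exactly why $d$ (equivalently $y_0$) has to be taken small and $C$ correspondingly large. The remaining items — verifying the range of $\varepsilon$, the reflection argument producing the lower barrier, and that $g$ is of the type of comparison hypersurface used in the proof of Theorem \ref{thmlongtimeexistencecatenoid} — are routine bookkeeping.
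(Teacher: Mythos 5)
Your explicit construction of the comparison catenoids is correct, and it genuinely fills in the part of the argument that the paper dismisses with ``it is then straightforward to choose the rest of the constants'': with $d=y_0\sqrt{1-y_0^2}$, $C=1/d$ and $\varepsilon=\sqrt{1-y_0^2}-d\arccosh(y_0/d)$ one checks that the right-hand side of \eqref{catenoidsphereangle} vanishes at $y_0$ (ninety-degree contact, which the theorem permits), that \eqref{catenoidintersectionsphere} holds by the definition of $\varepsilon$, that $\varepsilon\in[0,1)$ for $y_0$ small, and that $d\bigl[\arccosh(r_0/d)-\arccosh(y_0/d)\bigr]$, of order $y_0\log(1/y_0)$, eventually dominates $1-\sqrt{1-y_0^2}=O(y_0^2)$, so that the upper barrier exceeds height $1\ge|\omega_0|$ on $[r_0,R]$ and \eqref{initialheightcatenoid} holds with the reflected catenoid below. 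Your reading that only $\varepsilon_i$ is constrained to $[0,1)$ is also the only consistent one, since \eqref{catenoidintersectionsphere} forces $C_iy_i\ge1$ with $y_i<1$.

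However, you have skipped the step that the paper's own proof treats as the actual content of the corollary. You begin with ``Given $y_0\in(0,r_0)$'', which silently assumes $r_0>0$, i.e.\ that the Neumann boundary circle of the initial graph sits strictly below the North pole (respectively above the South pole). Under the bare hypothesis $|\omega_0|\le1$ the height is allowed to reach $1$, so this is not automatic; Theorem \ref{thmlongtimeexistencecatenoid} requires $y_i\in(0,r_0)$, which is vacuous if $r_0=0$, and all of your asymptotics as $y_0\to0^+$ use $r_0>0$ fixed (for instance $\arccosh(r_0/d)\to\infty$). The missing argument is short and is precisely what the paper proves: if $\omega_0(r_0)=\pm1$ then $r_0=0$, and the Neumann condition in \eqref{sphereradiallysymmetricgraph} forces $\frac{d\omega_0}{dy}(r_0)=\sqrt{1-r_0^2}/r_0=+\infty$ there, so immediately inside the boundary one has $|\omega_0|>1$, contradicting the height bound; hence $r_0>0$ and a strictly positive gap below the pole exists. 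Insert this observation before your construction and the proof is complete.
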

\begin{proof}
For the existence of the catenoids used as a barriers in the above theorem we have to first prove that the Neumann boundary of the initial graph is not equal to the North or South Pole of the sphere.
This is the same as proving that there exists a strictly positive constant ${\varepsilon}_i$ for the choice of catenoidal barriers in \eqref{initialheightcatenoid} and \eqref{catenoidintersectionsphere}.
Once we have $\varepsilon_i$, it is easy to choose the other constants $C_{i}$, $d_i$ and $y_{i}$.

Suppose that the initial graph satisfies ${\omega}_0(r_0)=1$, which implies that $r_0=0$.
Thus we find ourselves at the North pole of the sphere.
If the gradient of $\omega_0$ at the boundary is bounded or non-positive, then the Neumann condition is not satisfied there.
Therefore $\frac{d{\omega}_0}{dy}(r_0)=\frac{1}{r_0}=+\infty$.
This implies that there exists a $y\in(r_0,R)$ such that ${\omega}_0(y)>1$, which is a contradiction with the initial height bound.
A similar argument contradicts the assumption that ${\omega}_0(r_0)=-1$.

Thus there exist positive constants ${\varepsilon}_i\in [0,1)$ for $i=1,2$.
It is then straightforward to choose the rest of the constants which characterise the two catenoidal pieces found in Theorem \ref{thmlongtimeexistencecatenoid}.
\end{proof}

\begin{rmk}
Note that if we allow $|\omega_0| > 1$ then we must add additional restrictions, since if $\omega_0(r_0) = 1-\varepsilon$ for a sufficiently small $\varepsilon$ then a self-similar torus may be inserted underneath the graph $\omega_0$ close to the Neumann boundary which forces the singularity. The other issue is that if we allow arbitrary heights greater than one, then if the Dirichlet boundary is at a sufficiently large radius we may always place a self-similar torus under the initial graph. These are the only essential obstructions however; one may enforce an additional height restriction on the Neumann boundary and restrict the radius at the Dirichlet boundary in order to allow heights strictly arbitrarily large (in particular greater than 1) to be reached on the interior.
\end{rmk}
%}}}
\section{Reflectively symmetric graphs}
%{{{

In this section we consider initial hypersurfaces that satisfy \eqref{graphcondition} for $\zeta=e_{n+1}$ and condition \eqref{initialsphere}, which implies \eqref{graphcondition} holds with respect to two vectors: $e_{n+1}$ and $\xi$ defined in
\eqref{xidefn}.
We further assume that the initial data is reflectively symmetric over the hyperplanes $\{x_i=0\}$.

We collect these assumptions in the following definition.

\begin{defn}[RGMCF]
We say that $M_t = F(M^n,t)$ is a reflectively symmetric graphical mean curvature flow outside a sphere (RGMCF) if
\begin{enumerate}
\item[(i)]   $M_t = F(M^n,t)$ is a one-parameter family of hypersurfaces evolving by mean curvature flow outside a standard unit sphere in accordance with \eqref{immersion};
\item[(ii)]  $h_0 = 0$;
\item[(iii)] Condition \eqref{initialsphere} holds on $M_0$; and
\item[(iv)]  $M_0$ is reflectively symmetric across the hyperplanes $\{x_i=0\}$.
\end{enumerate}
\end{defn}

Our strategy is to use the initial reflective symmetry and the maximum principle on the evolution equations for $s_i:=\IP{\nu^{M_t}}{K_i}$ to show that the graph condition \eqref{graphcondition} with respect to both
$e_{n+1}$ and $\xi$ is preserved.
The results presented in this section are the generalisation to hypresurfaces, i.e. $n>2$, of the theorems for surfaces obtained in \cite{thesisvulcanov}.
There the terminology ``tilt point'' is introduced.
A tilt point is a point on the free boundary where we have lost the graph property in the $e_{n+1}$ direction.
At a tilt point the normal vector is horizontal.
Here let us extend this definition slightly as follows.

\begin{defn}[Tilt]
Let $X\in\R^n$ be a vector field and $M_t = F(M^n,t)$ be a one-parameter family of hypersurfaces evolving by mean curvature flow with at least one free boundary $\partial_NM^n$.
We call a point $(p,t) \in \partial_NM^n\times[0,T)$ a \emph{tilt in the $X$ direction} if $\IP{\nu^{M_t}(p,t)}{X(F(p))} = 0$.
\end{defn}

We first show that for $\Sigma$ an $n$-sphere, tilt in the $\xi$ direction and tilt in the $e_{n+1}$ direction are equivalent.

\begin{prop}
\label{propTilt}
Suppose $M_t = F(M^n,t)$ is a one-parameter family of hypersurfaces evolving by mean curvature flow outside a standard unit sphere in accordance with \eqref{immersion}.
On $\Sigma$ we have
\[
\IP{\nu^{M_t}}{\xi}=0\text{ if and only if }\IP{\nu^{M_t}}{e_{n+1}}=0.
\]
The same holds for any tangent vector to the sphere $\Sigma$ independent of the flow.
\end{prop}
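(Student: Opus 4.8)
The whole statement reduces to one algebraic identity on the unit sphere combined with the Neumann condition, so the plan is short. First I would extract the geometric content of the free boundary condition: along $\partial_N M^n$ the Neumann condition $\IP{\nu^{M_t}}{\nu^\Sigma\circ F}=0$, together with the fact that $\Sigma$ is the unit sphere centred at the origin — so that at a point $x\in\Sigma$ the normal $\nu^\Sigma$ is $\pm x$, i.e.\ the radial direction — forces $\IP{\nu^{M_t}(p,t)}{F(p,t)}=0$. In words: at every free boundary point $\nu^{M_t}$ is tangent to $\Sigma$. The orientation convention for $\nu^\Sigma$ is irrelevant here, since all that is used is orthogonality to the radial direction.

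Next I would observe that $\xi$ is exactly the tangential projection of $e_{n+1}$ onto $\Sigma$. Indeed, for $x\in\Sigma$ one has $1-x_{n+1}^2=\sum_{i=1}^n x_i^2$, so
\[
e_{n+1}-\IP{e_{n+1}}{x}\,x=\big(-x_{n+1}x_1,\ldots,-x_{n+1}x_n,\,1-x_{n+1}^2\big)=\xi(x)
\]
by the definition \eqref{xidefn}. Rearranging gives $e_{n+1}=\xi+\IP{e_{n+1}}{x}\,x$ on $\Sigma$. Pairing with $\nu^{M_t}$ at a free boundary point and using the tangency from the first step to annihilate the radial term yields
\[
\IP{\nu^{M_t}}{e_{n+1}}=\IP{\nu^{M_t}}{\xi}\qquad\text{on }\partial_N M^n .
\]
In particular the two quantities agree identically on the free boundary, which is stronger than the asserted equivalence.

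For the last sentence I would note that the computation used nothing special about $e_{n+1}$: for any fixed (flow-independent) ambient vector $w$, writing $w=w^{\top}+\IP{w}{x}\,x$ on $\Sigma$ with $w^{\top}\in T_x\Sigma$ its tangential part, the tangency of $\nu^{M_t}$ along $\partial_N M^n$ gives $\IP{\nu^{M_t}}{w}=\IP{\nu^{M_t}}{w^{\top}}$ there. Thus a tilt in the $w$ direction is the same as a tilt in the direction of the fixed tangent field $w^{\top}$, and this $w^{\top}$ plays for $w$ precisely the role $\xi$ plays for $e_{n+1}$.

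I do not expect a genuine obstacle: the only points requiring a little care are the sign-independent use of the Neumann condition noted above, and the emphasis that the identity holds only on $\Sigma$ — it is the free boundary condition that pins $\nu^{M_t}$ to the tangent space of the sphere, and in the interior of $M_t$ no such relation between $\IP{\nu^{M_t}}{e_{n+1}}$ and $\IP{\nu^{M_t}}{\xi}$ is available.
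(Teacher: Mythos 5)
Your proposal is correct and is essentially the paper's own argument: the paper likewise uses the Neumann condition (so that $\nu^{M_t}$ is tangent to $\Sigma$) together with $\lvert x\rvert=1$ on the sphere to compute $\IP{\nu^{M_t}}{\xi}=\IP{\nu^{M_t}}{e_{n+1}}$ at free boundary points, which is exactly your identity, just written as a coordinate calculation rather than via the decomposition $e_{n+1}=\xi+x_{n+1}\,x$ on $\Sigma$. Your projection formulation also makes the final claim about general flow-independent tangent directions immediate, which the paper leaves implicit.
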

\begin{proof}
The proof is basic and uses the fact that the position vector of a sphere is of constant length.
Suppose that $\IP{\nu^{M_t}}{\xi}=0$.
We want to show that $\IP{\nu^{M_t}}{e_{n+1}}=0$, i.e. $\nu_{n+1}=0$, where we have used the notation $\nu_i:=\IP{\nu^{M_t}}{e_i}$.
To show this we compute
\begin{align*}
0 &= \IP{\nu^{M_t}}{\xi}
\notag\\
  &= -x_{n+1}\sum_{i=1}^n x_i\nu_i + \nu_{n+1}\sum_{i=1}^n x_i^2\notag\\
  &= \nu_{n+1} x_{n+1}^2+\nu_{n+1}\big(1-x_{n+1}^2\big)\notag\\
  &= \nu_{n+1}\,,
\end{align*}
where we used that $\nu^{M_t}$ is tangent to $\Sigma$ and that on $\Sigma$ we have $\sum_{i=1}^n x_i^2 = 1$.
Conversely, the above computation also shows that $\IP{\nu^{M_t}}{\xi}=0$ if $\IP{\nu^{M_t}}{e_{n+1}}=0$.
\end{proof}

The following lemma shows that Dirichlet boundary conditions are consistent with conditions \eqref{initialsphere} so long as $h_0=0$.

\begin{lem}Suppose $M_t = F(M^n,t)$ is a one-parameter family of hypersurfaces evolving by mean curvature flow outside a standard unit sphere in accordance with \eqref{immersion} and $h_0=0$.
On the Dirichlet boundary ${\partial}_D M^n$ condition \eqref{graphcondition} for $\zeta=e_{n+1}$ implies \eqref{initialsphere} on $\partial_DM^n$.
\label{propInitialD}
\end{lem}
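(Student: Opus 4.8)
The plan is to work directly on the Dirichlet boundary $\partial_D M^n = \partial B_R(O_n^0)$ (recall $h_0 = 0$), where the geometry is completely explicit. On this boundary the position vector $F$ satisfies $\IP{F}{e_{n+1}} = 0$ and $\sum_{i=1}^n x_i^2 = R^2$; moreover the Dirichlet condition is \emph{fixed}, so the embedding $F_t$ restricted to $\partial_D M^n$ equals $F_0$ for all $t$. Since the surface is a graph over the hyperplane $\{x_{n+1}=0\}$ with respect to $e_{n+1}$ and is anchored along a horizontal $(n-1)$-sphere, the tangent space to $M_t$ at a Dirichlet boundary point contains the tangent space to $\partial B_R(O_n^0)$, i.e.\ all vectors orthogonal to both $e_{n+1}$ and to the radial direction $(x_1,\ldots,x_n,0)$. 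First I would record this and deduce that the unit normal $\nu^{M_t}$ at such a point lies in the $2$-plane spanned by $e_{n+1}$ and the radial vector $\rho := \tfrac{1}{R}(x_1,\ldots,x_n,0)$; write $\nu^{M_t} = \cos\theta\, \rho + \sin\theta\, e_{n+1}$ for some angle $\theta$.

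Next I would compute $s_i = \IP{\nu^{M_t}}{K_i}$ in these terms. Since $K_i = e_i x_{n+1} - x_i e_{n+1}$ and $x_{n+1}=0$ on $\partial_D M^n$, we get $K_i = -x_i e_{n+1}$ there, hence
\begin{align*}
s_i = \IP{\nu^{M_t}}{K_i} = -x_i \IP{\nu^{M_t}}{e_{n+1}} = -x_i \sin\theta = -x_i\,\nu_{n+1}.
\end{align*}
Condition \eqref{graphcondition} for $\zeta = e_{n+1}$ says precisely $\nu_{n+1} = \sin\theta > 0$. Therefore $s_i = -x_i\,\nu_{n+1}$ has the sign of $-x_i$: it is positive when $x_i < 0$, negative when $x_i > 0$, and zero when $x_i = 0$. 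These are exactly the three clauses of \eqref{initialsphere}. This is really the whole argument — it is a computation, not an estimate — so there is no serious obstacle; the only thing requiring a little care is the first step, namely justifying that $\nu^{M_t}$ has no component tangential to $\partial B_R(O_n^0)$, which follows because $\partial_D M^n$ is a smooth submanifold of $M_t$ and its tangent directions are tangent to $M_t$.

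One subtlety worth flagging: the statement presupposes that \eqref{graphcondition} holds \emph{on} $\partial_D M^n$, i.e.\ up to and including the Dirichlet boundary, so that $\nu_{n+1} > 0$ there rather than merely $\nu_{n+1} \ge 0$. I would note that this is part of the hypothesis (the flow is graphical in the sense that $\IP{\nu^{M_t}}{e_{n+1}} > 0$ everywhere on $M^n$, boundary included), so the strict inequalities in \eqref{initialsphere} are genuinely obtained and not just their non-strict versions. With that remark in place the proof is complete.
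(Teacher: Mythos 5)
Your proposal is correct and is essentially the paper's own argument: on the Dirichlet boundary $x_{n+1}=0$ (since $h_0=0$), so $\IP{\nu^{M_t}}{K_i}=\nu_i x_{n+1}-\nu_{n+1}x_i=-x_i\nu_{n+1}$, and the graph condition $\nu_{n+1}>0$ gives exactly the three sign clauses of \eqref{initialsphere}. The preliminary decomposition of $\nu^{M_t}$ into radial and vertical components is harmless but unnecessary, since the computation only uses $x_{n+1}=0$.
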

\begin{proof}
In the canonical orthonormal basis of $\R^{n+1}$ we compute at a point on the Dirichlet boundary where $\{x_{n+1}=0\}$:
\begin{align*}
\IP{\nu^{M_t}}{K_i}\ =\nu_1x_{n+1}-\nu_{n+1}x_i=-\nu_{n+1}x_i.
\end{align*}
Note that $\nu_{n+1}>0$ from \eqref{graphcondition} being satisfied with $\zeta=e_{n+1}$.
Therefore when $x_i < 0$, $\IP{\nu^{M_t}}{K_i} > 0$, when $x_i > 0$ we have $\IP{\nu^{M_t}}{K_i} < 0$, and when $x_i = 0$ we have $\IP{\nu^{M_t}}{K_i} = 0$.
This is precisely \eqref{initialsphere}.
\end{proof}

We now collect some additional results needed for the proof of our main theorem.
The first is the evolution of the quantities $s_i$.

\begin{prop}
Let $M_t = F(M^n,t)$ be a mean curvature flow of hypersurfaces in $\R^{n+1}$.
The quantities $s_i=\IP{\nu^{M_t}}{K_i}$, $i=1,\ldots,n$ satisfy the evolution equations
\begin{align}
\Big(\frac{d}{dt}-{\Delta}_{M_t}\Big)s_i\ =\ |A^{M_t}|^2  s_i,
\end{align}
where we denoted by $A^{M_t}$ the second fundamental form of $M_t$.
\label{killingevolution}
\end{prop}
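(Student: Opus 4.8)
The plan is to obtain the identity directly from two standard pointwise identities of mean curvature flow, with the convention $\partial_t F = -H\nu^{M_t}$ in force; since every step is an interior computation, no use is made of the Neumann or Dirichlet conditions. Write $x_a := \IP{F}{e_a}$ and $\nu_a := \IP{\nu^{M_t}}{e_a}$ for the Euclidean components. The two ingredients I would record first are: (a) $\Delta_{M_t}F = -H\nu^{M_t}$, which together with $\partial_t F = -H\nu^{M_t}$ gives $\big(\frac{d}{dt}-\Delta_{M_t}\big)x_a = 0$ for each $a = 1,\dots,n+1$; and (b) the evolution equation $\big(\frac{d}{dt}-\Delta_{M_t}\big)\nu^{M_t} = |A^{M_t}|^2\,\nu^{M_t}$ for the unit normal, which after pairing with the fixed vector $e_a$ yields $\big(\frac{d}{dt}-\Delta_{M_t}\big)\nu_a = |A^{M_t}|^2\,\nu_a$.

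Since $K_i = e_i x_{n+1} - x_i e_{n+1}$, the quantity in question is $s_i = \IP{\nu^{M_t}}{K_i} = x_{n+1}\nu_i - x_i\nu_{n+1}$, a sum of two products of the functions above. Applying the parabolic product rule $\big(\frac{d}{dt}-\Delta_{M_t}\big)(fg) = f\big(\frac{d}{dt}-\Delta_{M_t}\big)g + g\big(\frac{d}{dt}-\Delta_{M_t}\big)f - 2\IP{\nabla^{M_t}f}{\nabla^{M_t}g}$ to $fg = x_{n+1}\nu_i$ and to $fg = x_i\nu_{n+1}$, the terms in which $\frac{d}{dt}-\Delta_{M_t}$ lands on a coordinate function vanish by (a), and the terms in which it lands on $\nu_i$ or $\nu_{n+1}$ assemble by (b) into $|A^{M_t}|^2\big(x_{n+1}\nu_i - x_i\nu_{n+1}\big) = |A^{M_t}|^2 s_i$. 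What is left over is exactly the difference of gradient cross-terms $-2\IP{\nabla^{M_t}x_{n+1}}{\nabla^{M_t}\nu_i} + 2\IP{\nabla^{M_t}x_i}{\nabla^{M_t}\nu_{n+1}}$, so the whole proof reduces to showing this vanishes.

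This cancellation is the only step I expect to require genuine care, and it is where the Killing structure does its work. Using $\nabla_k^{M_t}x_a = \IP{\partial_k F}{e_a}$ and the Weingarten relation $\nabla_k^{M_t}\nu_a = h_k{}^{l}\IP{\partial_l F}{e_a}$, one computes $\IP{\nabla^{M_t}x_a}{\nabla^{M_t}\nu_b} = h^{kl}\IP{\partial_k F}{e_a}\IP{\partial_l F}{e_b}$, with $h^{kl}$ the second fundamental form with both indices raised; this expression is symmetric under interchange of $a$ and $b$ because $h^{kl}$ is symmetric. Hence $\IP{\nabla^{M_t}x_{n+1}}{\nabla^{M_t}\nu_i} = \IP{\nabla^{M_t}x_i}{\nabla^{M_t}\nu_{n+1}}$, the two cross-terms cancel, and we conclude $\big(\frac{d}{dt}-\Delta_{M_t}\big)s_i = |A^{M_t}|^2 s_i$. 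The only other point demanding attention is keeping the sign conventions for $\nu^{M_t}$, the Weingarten map and $A^{M_t}$ mutually consistent so that the reaction coefficient comes out as $+|A^{M_t}|^2$; morally, it is the antisymmetry of $K_i$ played against the symmetry of $A^{M_t}$ that kills the first-order terms.

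Finally, I would remark that this is merely the specialisation to $K = K_i$ of the general fact that for any Killing field $K$ of Euclidean $\R^{n+1}$ the normal component $\IP{\nu^{M_t}}{K}$ satisfies the same Jacobi-type equation $\big(\frac{d}{dt}-\Delta_{M_t}\big)\IP{\nu^{M_t}}{K} = |A^{M_t}|^2\IP{\nu^{M_t}}{K}$ as a constant vector field, so one could alternatively establish the proposition in that generality and then restrict to $K_i$.
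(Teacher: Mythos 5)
Your proof is correct, but it takes a genuinely different route from the one in the paper. The paper computes $\big(\tfrac{d}{dt}-\Delta_{M_t}\big)s_k$ directly on the quantity $\IP{\nu^{M_t}}{K_k}$: the time derivative is handled via $\tfrac{d}{dt}\nu^{M_t}=\nabla H$ together with the antisymmetry of $\D K_k$, and the Laplacian is expanded with the Gauss--Weingarten and Codazzi equations, the vanishing of $\D^2 K_k$, and a frame in which the first-order terms pairing $A^{M_t}$ against $\D K_k$ are seen to cancel (symmetry of $h_{ip}$ against antisymmetry of $\IP{\tau_p}{\D_{\tau_i}K_k}$); this argument works verbatim for an arbitrary Killing field of $\R^{n+1}$. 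You instead exploit the explicit form $K_i=e_ix_{n+1}-x_ie_{n+1}$ to write $s_i=x_{n+1}\nu_i-x_i\nu_{n+1}$ and then invoke the two standard interior identities $\big(\tfrac{d}{dt}-\Delta_{M_t}\big)\IP{F}{e_a}=0$ and $\big(\tfrac{d}{dt}-\Delta_{M_t}\big)\IP{\nu^{M_t}}{e_a}=|A^{M_t}|^2\IP{\nu^{M_t}}{e_a}$, finishing with the parabolic product rule; the only nontrivial point, the cancellation of the gradient cross-terms, follows correctly from $\IP{\nabla x_a}{\nabla\nu_b}=h^{kl}\IP{\partial_kF}{e_a}\IP{\partial_lF}{e_b}$ being symmetric in $(a,b)$. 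What your route buys is economy: the Gauss--Weingarten--Codazzi work is packaged once and for all inside the known evolution equations for the coordinate and normal components, and the symmetric-versus-antisymmetric cancellation appears in its cleanest form. What it gives up is generality: as stated it is tied to the specific rotational fields $K_i$, whereas the paper's computation (and the general statement you mention at the end, which indeed holds and could be proved by essentially the paper's argument) covers any Killing field at once. Your closing caveat about sign conventions is the right one to flag: with the paper's convention $\partial_tF=-H\nu^{M_t}$ both ingredient identities hold with the stated signs, so the reaction coefficient is indeed $+|A^{M_t}|^2$.
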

\begin{proof}%{{{
First we compute
\begin{align*}
\frac{d}{dt}s_k\ & =\ \frac{d}{dt}\IP{\nu^{M_t}}{K_k\circ F_t}\ =\
\IP{\nabla H}{K_k\circ F_t}\ -\ H\IP{\nu^{M_t}}{(\D_{\nu^{M_t}}K_k)(F_t)}\\
& =\ \IP{\nabla H}{K_k\circ F_t},
\end{align*}
where we have used the antisymmetry of Killing vector fields implying that $\IP{{\D}_{V}K_k}{V}=0$ for every vector field $V$.
Let $\{{\tau}_i\}_{i=1,\ldots,n}$ be an orthonormal basis of $TM_t$.
In the calculations below we omit the composition of $K_i$ with $F_t$.
We continue by computing the Laplace-Beltrami operator applied to $s_k$:
\begin{align*}
{\nabla}_{{\tau}_i}s_k
\ =\ &\nabla_{{\tau}_i}\IP{\nu^{M_t}}{K_k}=\sum_{p=1}^{n}h_{ip}\IP{{\tau}_p}{K_k}+\IP{\nu^{M_t}}{{\D}_{{\tau}_i}K_k},\\
D_{{\tau}_j}{\nabla}_{{\tau}_i}s_k
\ =\ &\sum_{p=1}^{n}{\nabla}_{{\tau}_j}h_{ip}\IP{{\tau}_p}{K_k}+\sum_{p=1}^{n}h_{ip}\IP{{\D}_{{\tau}_j}{\tau}_p}{K_k} + \sum_{p=1}^{n}h_{ip}\IP{{\tau}_p}{{\D}_{{\tau}_j}K_k}\\
&+\ \sum_{p=1}^{n}h_{jp}\IP{{\tau}_p}{{\D}_{{\tau}_i}K_k}+\IP{\nu^{M_t}}{{\D}^2_{{\tau}_i,{\tau}_j}K_k}+\IP{\nu^{M_t}}{{\D}_{{\D}_{{\tau}_j}{\tau}_i}K_k},
\end{align*}
where we used the Weingarten equation
 and denoted by $h_{ij}$ the components of the second fundamental form $A^{M_t}$. We also compute
\[
{\D}_{{\tau}_i}{\tau}_j=-h_{ij}\nu^{M_t}+\displaystyle
\sum_{k=1}^{n}\Gamma_{ij}^{k}{\tau}_k,
\]
using again the definition of the second fundamental form $A^{M_t}=\big(h_{ij}\big)_{1\leq i,j\leq n}$ and Christoffel symbols.
For ease of computation we choose an orthonormal basis of the tangent space such that the Christoffel symbols vanish at the point where the computation is evaluated, that is $\Gamma_{ij}^{k}= 0$ for all
$i,j,k=1,\ldots,n$.
The local linearity of a Killing vector field causes the second derivative of $K_k$ to also vanish.
These considerations simplify the computation to:
\begin{align*}
{\Delta}_{M_t} s_k\ =\ &\sum_{i=1}^{n}
\IP{{\tau}_i}{{\D}_{{\tau}_i}\nabla
s_k}=\sum_{i=1}^{n}\sum_{p=1}^{n}{\nabla}_{{\tau}_i}h_{ip}\IP{{\tau}_p}{K_k}
-\sum_{i=1}^{n}\sum_{p=1}^{n}h_{ip}h_{ip}\IP{\nu^{M_t}}{K_k}\\
&+\
\sum_{i=1}^{n}\sum_{p=1}^{n}h_{ip}\IP{{\tau}_p}{{\D}_{{\tau}_i}K_k}+\sum_{i=1}^{n}\sum_{p=1}^{n}h_{ip}\IP{{\tau}_p}{{\D}_{{\tau}_i}K_k}
-\sum_{i=1}^{n}h_{ii}\IP{\nu^{M_t}}{{\D}_{\nu^{M_t}}K_k}.
\end{align*}
Using
the Codazzi equation on the first term we obtain
\begin{align*}
\sum_{i=1}^{n}\sum_{p=1}^{n}{\nabla}_{{\tau}_i}h_{ip}\IP{{\tau}_p}{K_k}\
=\ \IP{\nabla H}{K_k}.
\end{align*}
The antisymmetry of Killing vector fields implies that $\IP{{\D}_{V}K_k}{V}=0$ for every vector field $V$.
This makes the last term in the computation of the Laplace-Beltrami operator applied to $s_i$ vanish.
To use this property on the rest of the terms we consider local normal coordinates which diagonalise the second fundamental form as in \cite{andrews1994contraction}.
This eliminates all the first order terms containing $\D K_k$, leaving us with the following expression:
\begin{align*}
\Delta s_k\ =\ \IP{\nabla
H}{K_k}-\sum_{i=1}^{n}h_{ii}^2\IP{\nu^{M_t}}{K_k}=\IP{\nabla
H}{K_k}-|A_{M_t}|^2 s_k.
\end{align*}
If we put this last result together with the time derivative
computed above we finally obtain the desired evolution for $s_k$.
\end{proof}%}}}

We now employ the following result from Stahl \cite{thesisstahl}.
The problem treated in \cite{thesisstahl} is the mean curvature flow of immersions with a ninety-degree contact angle on a fixed hypersurface in ${\R}^{n+1}$, but here we are only interested in using the setting of $\Sigma$ as the unit sphere in $\R^{n+1}$.

\begin{prop}[Stahl \cite{thesisstahl}]
\label{sigmacurvaturerelated}
Let $M_t = F(M^n,t)$ be a mean curvature flow of hypersurfaces in $\R^{n+1}$ satisfying \eqref{immersion}.
Let $X\in \Sigma \cap M_t$, $v\in T_XM_t$ and $w:=v-\IP{v}{{\nu}^{\Sigma}}{\nu}^{\Sigma}\in T_X(M_t\cap \Sigma)$ be the projection of $v$ onto $T_X\Sigma$.
Then:
\begin{align*}
A^{M_t}(w,{\nu}^{\Sigma})&\ =\ -A^{\Sigma}(w,\nu^{M_t}),\\
A^{M_t}(v,{\nu}^{\Sigma})&\ =\ -A^{\Sigma}(w,\nu^{M_t})\ +\
\IP{v}{{\nu}^{\Sigma}}A^{M_t}({\nu}^{\Sigma},{\nu}^{\Sigma}),\\
\nabla_{\nu^{\Sigma}}H\ &=\ H A^{\Sigma}(\nu^{M_t},\nu^{M_t}).
\end{align*}
\end{prop}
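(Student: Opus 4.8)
The plan is to exploit the perpendicularity forced by the Neumann condition in \eqref{immersion}. Along $\partial_N M^n$ one has $\IP{\nu^{M_t}}{\nu^\Sigma\circ F}=0$, so at a contact point $X\in\Sigma\cap M_t$ the vector $\nu^\Sigma$ lies in $T_XM_t$ and $\nu^{M_t}$ lies in $T_X\Sigma$; since two distinct hyperplanes through $X$ are transversal, this gives the orthogonal splittings $T_XM_t=T_X(M_t\cap\Sigma)\oplus\R\nu^\Sigma$ and $T_X\Sigma=T_X(M_t\cap\Sigma)\oplus\R\nu^{M_t}$. Granting this, all three identities come from differentiating the scalar relation $\IP{\nu^{M_t}}{\nu^\Sigma}=0$ --- tangentially along $M_t\cap\Sigma$ for the first two, and in time along the flow for the third. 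Throughout I use the convention $A(Y,Z)=\IP{\D_Y\nu}{Z}$, which matches the paper's sign choices ($\partial_tF=-H\nu^{M_t}$, $h_{ij}=-\IP{\D_{\tau_i}\tau_j}{\nu^{M_t}}$).

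\emph{First and second identities.} Take $w\in T_X(M_t\cap\Sigma)$. Because $\IP{\nu^{M_t}}{\nu^\Sigma}$ vanishes identically on $\partial_N M_t=M_t\cap\Sigma$ and $w$ is tangent to this set, differentiating along $w$ gives
\[
A^{M_t}(w,\nu^\Sigma)=\IP{\D_w\nu^{M_t}}{\nu^\Sigma}=-\IP{\nu^{M_t}}{\D_w\nu^\Sigma}=-A^\Sigma(w,\nu^{M_t}),
\]
the last equality using that $w$ and $\nu^{M_t}$ are both tangent to $\Sigma$. For general $v\in T_XM_t$ the vector $w=v-\IP{v}{\nu^\Sigma}\nu^\Sigma$ is the orthogonal projection of $v$ onto $T_X\Sigma$, and since $v\in T_XM_t$ it actually lies in $T_XM_t\cap T_X\Sigma=T_X(M_t\cap\Sigma)$; bilinearity of $A^{M_t}$ and the previous line then give
\begin{align*}
A^{M_t}(v,\nu^\Sigma)&=A^{M_t}(w,\nu^\Sigma)+\IP{v}{\nu^\Sigma}A^{M_t}(\nu^\Sigma,\nu^\Sigma)\\
&=-A^\Sigma(w,\nu^{M_t})+\IP{v}{\nu^\Sigma}A^{M_t}(\nu^\Sigma,\nu^\Sigma).
\end{align*}

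\emph{Third identity.} Here I would differentiate the Neumann condition, valid on $\partial_N M^n\times I$, with respect to $t$ at a fixed boundary point. Using the standard mean curvature flow evolution $\partial_t\nu^{M_t}=\nabla^{M_t}H$ together with $\tfrac{d}{dt}(\nu^\Sigma\circ F)=\D_{\partial_t F}\nu^\Sigma=-H\,\D_{\nu^{M_t}}\nu^\Sigma$ (legitimate because $\partial_t F=-H\nu^{M_t}$ is tangent to $\Sigma$ on the free boundary), the product rule yields
\[
\IP{\nabla^{M_t}H}{\nu^\Sigma}-H\IP{\nu^{M_t}}{\D_{\nu^{M_t}}\nu^\Sigma}=0.
\]
Since $\nu^\Sigma$ is a unit tangent vector to $M_t$, the first term is $\nabla_{\nu^\Sigma}H$; since $\nu^{M_t}$ is tangent to $\Sigma$, the second bracket equals $A^\Sigma(\nu^{M_t},\nu^{M_t})$. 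This gives $\nabla_{\nu^\Sigma}H=H\,A^\Sigma(\nu^{M_t},\nu^{M_t})$.

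All of this is routine boundary calculus; the only thing that needs attention is that each second fundamental form is evaluated on vectors genuinely tangent to the hypersurface it belongs to, which is exactly what the ninety-degree contact condition supplies. So the main obstacle is just bookkeeping with the tangent-space decompositions above, together with correctly invoking the evolution law $\partial_t\nu^{M_t}=\nabla^{M_t}H$ in the last step.
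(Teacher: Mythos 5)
Your argument is correct, and it is worth noting that the paper itself offers no proof of this proposition: it is quoted verbatim from Stahl's thesis \cite{thesisstahl}. Your derivation is the standard one (and essentially Stahl's): the ninety-degree Neumann condition puts $\nu^\Sigma$ in $T_XM_t$ and $\nu^{M_t}$ in $T_X\Sigma$, the first two identities follow by differentiating $\IP{\nu^{M_t}}{\nu^\Sigma\circ F}=0$ along directions tangent to the contact submanifold together with bilinearity and the orthogonal splitting $v=w+\IP{v}{\nu^\Sigma}\nu^\Sigma$, and the third follows by differentiating the same relation in time using $\partial_t\nu^{M_t}=\nabla^{M_t}H$. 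Two small points you handled correctly and that deserve the emphasis you gave them: (i) the sign convention $A(Y,Z)=\IP{\D_Y\nu}{Z}$ is indeed the one the paper uses (it is consistent with the Weingarten computations in Proposition \ref{killingevolution} and with the paper's later use $A^\Sigma(\nu^{M_t},\nu^{M_t})=-1$ for the inward-pointing sphere normal in Proposition \ref{propCurvaturebound}), so no sign discrepancies arise; and (ii) the chain rule $\tfrac{d}{dt}(\nu^\Sigma\circ F)=\D_{\partial_tF}\nu^\Sigma$ is legitimate here either because the paper extends $\nu^\Sigma$ to all of $\R^{n+1}$ or, intrinsically, because the contact condition keeps $F(p,t)\in\Sigma$ so that $\partial_tF=-H\nu^{M_t}$ is tangent to $\Sigma$ on the free boundary, exactly as you said. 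One could also observe that your first two identities require no flow at all, only perpendicular contact, while only the third uses the evolution; this matches how the identities are invoked later in the paper.
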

So far the results have been obtained independently of the reflective symmetry. For the following we make use of the reflective symmetry to restrict the problem in a subcone of the ambient space.

Conditions \eqref{initialsphere} and the reflective symmetry of the evolving hypersurfaces allow us to restrict the problem to the cone
\[
\Q^+ = \{(x_1,\ldots,x_n)\,:\,x_i\geq 0 \ \text{for each} \ i=1,\ldots,n\}.
\]
Let us define ${M_t}^{+}:=M_t\cap \Q^+$. From the initial condition \eqref{initialsphere} we have that on ${M_0}^{+}$, $s_i(X_0)<0$, with zero boundary values on ${M_0}^{x_i=0}=M_0 \cap \{x_i=0\}$.
There are $n+1$ more boundaries of the domain: the free boundary at the intersection with the sphere $\Sigma$ which we denote by ${\partial}_N{M_0}^{+}$, the fixed Dirichlet boundary on the fixed radius outside the unit
sphere, which we denote by ${\partial}_D{M_0}^{+}$, and ${M_0}^{x_j=0}=M_0 \cap \{x_j=0\}$ for all $j\neq i$.

The following result states that tilt points do not occur on planes of reflection.

\begin{prop}
\label{propepsilonboundariesn}
Let $M_t = F(M^n,t)$ be an RGMCF with $|\IP{F_0}{e_{n+1}}|\le1$.
For every $j\in\{1,\ldots,n\}$ and every $X=F_t(p)\in \{x_j=0\}$, $p\in M^n$, there exists $i\in\{1,\ldots,n\}$, $i\neq j$ such that
\[
s_i(X) \ne 0.
\]
\end{prop}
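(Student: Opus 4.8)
The plan is a proof by contradiction. Fix $j$ and suppose $X=F_t(p)\in\{x_j=0\}$ satisfies $s_i(X)=0$ for every $i\ne j$. Writing $\nu_k:=\IP{\nu^{M_t}}{e_k}$, the reflective symmetry of $M_t$ across $\{x_j=0\}$ gives $\nu_j(X)=0$ (hence also $s_j(X)=\nu_jx_{n+1}-x_j\nu_{n+1}=0$), and likewise $s_k(X)=0$ whenever $x_k(X)=0$; thus only the indices $i\ne j$ with $x_i(X)\ne0$ carry content. Substituting $\nu_j=x_j=0$ into $s_i(X)=\nu_ix_{n+1}-x_i\nu_{n+1}=0$ and distinguishing according to whether $\nu_{n+1}(X)$ and $x_{n+1}(X)$ vanish, one finds that one of the following holds: (i) $\nu^{M_t}(X)$ is a nonzero multiple of the position vector $F(X)$ and $x_{n+1}(X)\ne0$; or (ii) $\nu_{n+1}(X)=0$, which then forces $x_{n+1}(X)=0$, so $X$ has a horizontal unit normal and sits at the height of the equator of $\Sigma$. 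Moreover, if $x_i(X)=0$ for all $i\ne j$ then $F(X)$ lies on the $x_{n+1}$-axis, and since $M_t$ is outside the unit sphere while (by the preserved height bound below) $|\IP{F}{e_{n+1}}|\le1$, this forces $F(X)$ to be a pole of $\Sigma$.

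I would first dispatch case (i) according to where $X$ lies. If $X\in\partial_N M^n$, then $F(X)\in\Sigma$, and since $\Sigma$ is the unit sphere about the origin $\nu^{\Sigma}(F(X))=-F(X)$, so the Neumann condition $\IP{\nu^{M_t}}{\nu^{\Sigma}}=0$ yields $\IP{\nu^{M_t}}{F(X)}=0$, contradicting $\nu^{M_t}(X)\parallel F(X)$. If $X\in\partial_D M^n$, then $F(X)\in\partial B_R(O_n^{0})\subset\{x_{n+1}=0\}$ because $h_0=0$, contradicting $x_{n+1}(X)\ne0$. If $F(X)$ is interior to $M_t$ and $t>0$, pick $i\ne j$ with $x_i(X)\ne0$ (possible unless $F(X)$ is on the axis, excluded in the interior by the strict height bound for $t>0$) and, using the symmetry across $\{x_i=0\}$ under which $s_i$ is odd, work on $M_t\cap\{x_i\ge0\}$; there $s_i$ solves the parabolic equation $(\partial_t-\Delta_{M_t})s_i=|A^{M_t}|^2s_i$ of Proposition~\ref{killingevolution}, vanishes on $\{x_i=0\}$, and has conormal derivative $-s_i$ along $\partial_N M^n$, an oblique condition obtained from Proposition~\ref{sigmacurvaturerelated} together with the umbilicity of $\Sigma$ and the identity $\IP{F}{K_i}=0$. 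Granting that $s_i\le0$ on $M_t\cap\{x_i\ge0\}$, the strong maximum principle forces $s_i<0$ at interior points with $x_i>0$ for $t>0$, unless $s_i\equiv0$, which contradicts \eqref{initialsphere} at $t=0$; either way we contradict $s_i(X)=0$.

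For case (ii), and for the degenerate possibility $\nu^{M_t}(X)=\pm e_j$ allowed by the symmetry, the crucial point is to exclude the poles of $\Sigma$ from $M_t$. Here I would establish that $|\IP{F}{e_{n+1}}|\le1$ is preserved, since $\IP{F}{e_{n+1}}$ solves the heat equation on $M_t$, lies in $[-1,1]$ on $\partial_N M^n$ (where $F\in\Sigma$), equals $h_0=0$ on $\partial_D M^n$, and lies in $[-1,1]$ at $t=0$ by hypothesis; moreover the strong maximum principle gives strict inequality in the interior for $t>0$. If the pole $(0,\dots,0,1)$ belonged to $\partial_N M_t$ for some $t>0$, then $\IP{F}{e_{n+1}}$ would attain its maximum $1$ there and only there, so Hopf's boundary point lemma would force its outward conormal derivative to be strictly positive; but the outward conormal there is $\nu^{\Sigma}=-e_{n+1}$, so this derivative equals $\IP{-e_{n+1}}{e_{n+1}}=-1<0$, a contradiction (the South pole is identical). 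The remaining configurations in case (ii), an interior, Neumann, or Dirichlet point at equator height with horizontal normal, are treated with the evolution $(\partial_t-\Delta_{M_t})\nu_{n+1}=|A^{M_t}|^2\nu_{n+1}$ and the boundary identities of Proposition~\ref{sigmacurvaturerelated}; when $X\in\partial_N M^n$, Proposition~\ref{propTilt} marks such an $X$ as a tilt point, and for $n=2$ the Neumann condition and umbilicity of $\Sigma$ already give a contradiction, while for $n\ge3$ one runs the maximum principle for $\nu_{n+1}$.

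The step I expect to be the main obstacle is the interior/half-space argument and, with it, propagating the required signs ($s_i\le0$ on $M_t\cap\{x_i\ge0\}$, and $\nu_{n+1}\ge0$ near $\partial_D M^n$) by maximum principles whose boundary analysis touches exactly the reflection planes and corners involved here; the various statements must be proved in a careful order, within a first-time argument, to avoid circularity. The ingredient that unlocks the scheme is the Hopf computation at the poles, powered by the preserved and (for $t>0$) strict height bound: once the poles are excluded, $\partial_N M_t$ stays in the part of $\Sigma$ transverse to $e_{n+1}$, and the oblique condition for $s_i$ on $\partial_N M^n$, together with Proposition~\ref{propTilt} and Lemma~\ref{propInitialD}, lets one carry the sign of $s_i$ forward from the initial data \eqref{initialsphere}.
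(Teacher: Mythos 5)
Your overall frame (restriction to the cone, a first-time argument, Hopf lemma at the boundary, exclusion of corners and of the poles) is in the right spirit, and your treatment of the Neumann and Dirichlet sub-cases of your case (i) agrees with what the paper does. But there is a genuine gap at the heart of the proposal: both of your main cases are resolved only by invoking sign-preservation facts that, in the paper's logical order, come \emph{after} this proposition and depend on it. In case (i) at an interior point you argue ``granting that $s_i\le0$ on $M_t\cap\{x_i\ge0\}$'' and then apply the strong maximum principle; but $s_i\le 0$ on the half-region is precisely the conclusion of Proposition \ref{propKilling} (preservation of \eqref{initialsphere}), whose proof requires Propositions \ref{propepsilonboundariesn} and \ref{propepsilonboundaries} to exclude new zeros on the reflection-plane portions of the boundary of $M_t\cap\Q^+$. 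Likewise, in case (ii) you propose to ``run the maximum principle for $\nu_{n+1}$'', i.e.\ to use preservation of graphicality, which is Theorem \ref{thmLongsphere} itself and again sits downstream of this proposition. You flag the circularity yourself, but flagging it does not remove it: even inside a first-time argument, at the first instant the relevant quantities vanish at a point $X$ of a reflection plane you still need a \emph{pointwise} contradiction at $X$, and your proposal does not supply one.

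The paper's proof provides exactly that missing mechanism. Assuming $s_i(X)=0$ for all $i$ (with $s_j\equiv 0$ on $\{x_j=0\}$ automatic), it shows the normal at $X$ is radial, excludes the Neumann corner via the perpendicularity condition and the Dirichlet corner via barriers, and then at a first such zero applies the Hopf lemma twice: for $i\neq j$ it gives
\[
0\ <\ \nabla_{\tau_n}s_i\ =\ \sum_{s=1}^{n-1}A^{M_t}(\tau_n,\tau_s)\IP{\tau_s}{K_i},
\]
while for $s_j$ it gives $0<\nabla_{\tau_n}s_j = x_1x_{n+1}\bigl(-A^{M_t}(\tau_n,\tau_n)+\bigl(\sum_s x_s^2\bigr)^{-1/2}\bigr)$, which forces $x_1x_{n+1}\neq 0$; finally, differentiating the identity $s_j\equiv 0$ in directions $\tau_s$ tangent to $\{x_j=0\}$ yields $A^{M_t}(\tau_s,\tau_n)\,x_1x_{n+1}=0$, hence $A^{M_t}(\tau_s,\tau_n)=0$ for all $s\in\{1,\ldots,n-1\}$, contradicting the first inequality. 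Nothing of this second-fundamental-form contradiction appears in your proposal, and without it (or an independent substitute) your scheme cannot be closed. Your Hopf computation excluding the poles via the height function is fine as a supplement, but it does not replace the key step.
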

\begin{proof}%{{{
Suppose that there exists a $j\in\{1,\ldots,n\}$ such that there exists a point $X=F_t(p)\in \{x_j=0\}$, $p\in M^n$, for which we have
\[
s_i(X) = 0,\quad\text{for all }\quad i\in\{1,\ldots,n\}.
\]
Notice that by smoothness of $s_j$ we automatically have $s_j=0$ on $\{x_j=0\}$.
At $X$, a boundary point for the subcone $\Q^+$, consider an orthonormal basis $\{{\tau}_1,\ldots,{\tau}_n\}$ of $T_X M_t$ such that
\begin{align*}
{\tau}_i|_X\ &\in T(M_t \cap \{x_j=0\}),\ \ \text{and}\\
{\tau}_n|_X(x_1,\ldots,x_{n+1}) \ &=\ (x_j,0,\ldots,0,-x_1,0,\ldots,0)|_X\\
&=\ {\nu}_{M_t\cap\{x_j=0\}}|_X \in T^{\perp}(M_t\cap \{x_j=0\}) \cap TM_t.
\end{align*}
Here we have used the fact that we can always find an $x_k\neq 0$ since, by the comparison principle, the origin can never be one of the points discussed here.
Without loss of generality we can chose this to be $x_1$ which explains our choice of $\tau_n$.

At this particular point $X\in\{x_j=0\}$ the unit normal of the evolving hypersurfaces $M_t$ is of the form
\begin{align*}
\nu^{M_t}|_X=\frac{1}{\sqrt{\sum_{i=1}^{n+1}x_i^2}}(x_1,\ldots,x_{n+1}).
\end{align*}
The above can be proved by induction using the fact that $s_i=0$ for all $i\in\{1,\ldots,n\}$.
We omit the details here.

The specific form of the normal is enough to exclude the presence of this type of point $X$ at the corner with the Neumann boundary where $\IP{\nu^{M_t}}{\nu^\Sigma}=0$ and $\nu^\Sigma=-(x_1,\ldots,x_{n+1})$ for the
sphere $\Sigma$. If the point $X$ would be at the corner with the Dirichlet boundary where $\{x_{n+1}=0\}$ then the specific form of the normal would imply the graph is vertical.
This possibility is excluded by a standard barrier argument, which concludes that the gradient of the graph function is bounded.
This idea of barriers on the Dirichlet boundary will be exploited in detail in the proof of Proposition \ref{propKilling}.

Note that for $\Q^+\cap M_t$ we have $s_i<0$ for all $i\in \{1,\ldots,n\}$ and $X$ would be a first boundary point where a new maximum with value zero would be attained for all the $s_i$ quantities. We will now employ
the maximum principle, using the parabolic evolution equation for the $s_i$ quantities and the Hopf Lemma to show that such a point $X$ can not exist.
First from the Hopf Lemma we see that at $X$ for all $i\in\{1,\ldots,n\}$ with $i\neq j$
\begin{align}
0&<\nabla_{\tau_n} s_i = \sum_{s=1}^n A^{M_t}(\tau_n,\tau_s)\IP{\tau_s}{K_i}+\IP{\D_{\tau_n}K_i}{\nu^{M_t}}\notag\\
&= \sum_{s=1}^{n-1} A^{M_t}(\tau_n,\tau_s)\IP{\tau_s}{K_i},\label{epsilon1}
\end{align}
where we have used that $\D_{\tau_n}K_i \equiv 0$ and $\IP{\tau_n}{K_i}=0$ when $x_j=0$ for $i \neq j$.
The same Hopf Lemma will apply for $i=j$ but different terms are non-vanishing due to our choice of tangent vectors.
At $X$ we have again
\begin{align}
0&<\nabla_{\tau_n} s_j = \sum_{s=1}^n A^{M_t}(\tau_n,\tau_s)\IP{\tau_s}{K_j}+\IP{\D_{\tau_n}K_j}{\nu^{M_t}}\notag\\
&=  A^{M_t}(\tau_n,\tau_n)\IP{\tau_n}{K_j} + \frac{1}{\sqrt{\sum_{s=1}^{n+1}x_s^2}}x_1x_{n+1}\notag\\
&=  -A^{M_t}(\tau_n,\tau_n)x_1x_{n+1} + \frac{1}{\sqrt{\sum_{s=1}^{n+1}x_s^2}}x_1x_{n+1}\notag\\
&=  x_1x_{n+1}\left(-A^{M_t}(\tau_n,\tau_n) + \frac{1}{\sqrt{\sum_{s=1}^{n+1}x_s^2}}\right),\label{epsilon2}
\end{align}
where we have used $\IP{\tau_s}{K_j}=0$ for all $s\in\{1,\ldots,n-1\}$ when $x_j=0$ and since $\tau_s\in\{x_j=0\}$.
But also at $X$ found on $\{x_j=0\}$ we have $s_j=0$ so we can differentiate in directions tangent to the boundary contained in $\{x_j=0\}$ to obtain for all $s\in\{1,\ldots,n-1\}$
\begin{align}
0&=\nabla_{\tau_s}s_j=\nabla_{\tau_s}\IP{\nu^{M_t}}{K_j}=\sum_{p=1}^{n}A^{M_t}(\tau_p,\tau_s)\IP{\tau_p}{K_j}+\IP{\D_{\tau_s}K_j}{\nu^{M_t}}\notag\\
&= - A^{M_t}(\tau_s,\tau_n)x_1x_{n+1},\label{epsilon3}
\end{align}
where we have used $\nu_j=0$ and $\tau_s\in\{x_j=0\}$ to show that $\IP{\D_{\tau_s}K_j}{\nu^{M_t}}=0$, also $\IP{\tau_p}{K_j}=0$ for all $p\neq n$ and $\IP{\tau_n}{K_j}=-x_1x_{n+1}$. Since \eqref{epsilon2} implies
$x_1x_{n+1}\neq 0$, \eqref{epsilon3} shows that $A^{M_t}(\tau_s,\tau_n)=0$ for all $s\in\{1,\ldots,n-1\}$ and this contradicts \eqref{epsilon1}. So our assumption of the existence of such a point $X$ was false and the
proposition is proved.
\end{proof}%}}}

\begin{rmk}
The initial height bound used above can be improved through use of catenoid comparison as in Section 2.
\end{rmk}

We can make Proposition \ref{propepsilonboundariesn} above even stronger by showing that on any reflection hyperplane $\{x_i=0\}$ the only $s_j$ quantity which may vanish is the one which must vanish, that is, the
particular $s_i$ quantity corresponding to that hyperplane.

\begin{prop}
Let $M_t = F(M^n,t)$ be an RGMCF with $|\IP{F_0}{e_{n+1}}|\le1$.
For all $j\in\{1,\ldots,n\}$ and all points $X=F_t(p)\in \{x_j=0\}$ with $p\in M^n$ we have $s_k(X)\neq 0$ for all $k\in\{1,\ldots,n\}$, $k\neq j$.
\label{propepsilonboundaries}
\end{prop}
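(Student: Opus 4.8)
The plan is to argue by contradiction in the same spirit as the proof of Proposition \ref{propepsilonboundariesn}, but now extracting more structural information from the vanishing of a \emph{single} off-diagonal $s_k$. Suppose that for some $j$ and some point $X = F_t(p)\in\{x_j=0\}$ there is an index $k\neq j$ with $s_k(X)=0$. Since we are working on $M_t^+ = M_t\cap\Q^+$ where $s_i\le 0$ for all $i$, with $s_j\equiv 0$ on $\{x_j=0\}$ by the reflective symmetry, the point $X$ is a boundary maximum (value zero) for $s_k$. First I would record which boundary of $M_t^+$ the point $X$ lies on: it cannot be an interior point of $M_t^+$ by the strong maximum principle applied to the evolution equation $(\partial_t-\Delta)s_k = |A|^2 s_k$ from Proposition \ref{killingevolution}; it cannot lie on the Neumann boundary on $\Sigma$, nor on the Dirichlet boundary, by the barrier arguments already invoked in Proposition \ref{propepsilonboundariesn} (for the Dirichlet boundary the gradient bound forbids a vertical normal; the detailed barrier construction is deferred to Proposition \ref{propKilling}); and it cannot lie on $\{x_m=0\}$ for $m\notin\{j,k\}$ in a way that is already excluded, so the remaining case is $X\in\{x_j=0\}$ (possibly also on $\{x_k=0\}$, which only makes $s_k=0$ automatic and strengthens nothing).

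Next I would apply the Hopf boundary-point lemma to $s_k$ at $X$, using the inward normal $\tau_n = \nu_{M_t\cap\{x_j=0\}}$ to the free-of-this-face piece, exactly as in \eqref{epsilon1}: this gives
\begin{align*}
0 < \nabla_{\tau_n}s_k = \sum_{s=1}^{n-1} A^{M_t}(\tau_n,\tau_s)\IP{\tau_s}{K_k},
\end{align*}
after using $\D_{\tau_n}K_k\equiv 0$ and $\IP{\tau_n}{K_k}=0$ on $\{x_j=0\}$ for $k\neq j$. The key new input is that $s_k$ vanishes \emph{along the whole reflection hypersurface} $M_t\cap\{x_j=0\}$ only if $k=j$; for $k\neq j$ it is a priori nonzero there, so instead I would use that at the boundary maximum $X$ all tangential derivatives of $s_k$ along $\partial(M_t^+)\cap\{x_j=0\}$ vanish. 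Differentiating $s_k$ in the directions $\tau_1,\dots,\tau_{n-1}$ tangent to $M_t\cap\{x_j=0\}$ and using that $s_k$ attains its maximum value $0$ at $X$ relative to this boundary face — together with the constraint $s_j\equiv 0$ on $\{x_j=0\}$ giving control of the normal — one obtains, as in \eqref{epsilon3}, relations forcing $A^{M_t}(\tau_s,\tau_n)=0$ for $s=1,\dots,n-1$. Feeding this back into the Hopf inequality above yields $0<0$, the desired contradiction.

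The main obstacle, and the step I would spend the most care on, is verifying that $X$ genuinely behaves as a boundary maximum for $s_k$ \emph{relative to the face} $\{x_j=0\}$, so that all tangential derivatives there vanish; this requires knowing $s_k\le 0$ on a full neighbourhood of $X$ inside $M_t^+$ and that $s_k(X)=0$ is the face-maximum, which in turn rests on the sign conditions \eqref{initialsphere} being preserved — but that preservation is precisely what is established, via the maximum principle on the $s_i$ system together with Propositions \ref{propInitialD} and \ref{propepsilonboundariesn}, so the logical dependence is consistent. A secondary subtlety is the form of $\IP{\tau_s}{K_k}$ and $\IP{\D_{\tau_s}K_k}{\nu^{M_t}}$ at $X$ when $k\neq j$: unlike the $k=j$ case treated in \eqref{epsilon2}, here one cannot appeal to $\nu_k=0$, so the bookkeeping of which inner products survive must be redone explicitly, choosing the auxiliary coordinate (as in the previous proof, an $x_1\neq 0$) compatibly with both $j$ and $k$. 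Once these are in place the contradiction is immediate.
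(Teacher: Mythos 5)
Your setup (restriction to $\Q^+$, contradiction at a first zero of $s_k$, Hopf lemma for $s_k$ in the direction $\tau_n$ normal to the face $\{x_j=0\}$ yielding \eqref{epsilon21}) matches the paper, but the core of your argument has a genuine gap: you propose to obtain $A^{M_t}(\tau_s,\tau_n)=0$ by differentiating $s_k$ tangentially along the face at the maximum point $X$. That computation does not produce the claimed relations. Indeed, for $k\neq j$ one has
\begin{align*}
0=\nabla_{\tau_s}s_k=\sum_{p=1}^{n}A^{M_t}(\tau_p,\tau_s)\IP{\tau_p}{K_k}+\nu_k(\tau_s)_{n+1}-\nu_{n+1}(\tau_s)_k ,
\end{align*}
and none of the simplifications available in \eqref{epsilon3} apply: $\IP{\tau_p}{K_k}$ need not vanish for $p\neq n$, and the zeroth-order term $\nu_k(\tau_s)_{n+1}-\nu_{n+1}(\tau_s)_k$ does not drop out (those cancellations in the paper use $\nu_j=0$ and $(\tau_s)_j=0$, which are special to the index $j$ of the reflection face). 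So you are left with relations involving the uncontrolled tangential components $A^{M_t}(\tau_p,\tau_s)$, $p\le n-1$, and no conclusion about the mixed components $A^{M_t}(\tau_s,\tau_n)$ follows; the ``$0<0$'' contradiction is not reached. You flag this bookkeeping as a secondary subtlety, but it is precisely where the proof lives.

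The paper's proof extracts the missing information from $s_j$, not $s_k$: (i) since $s_j\equiv 0$ on $\{x_j=0\}$, tangential differentiation of $s_j$ gives \eqref{epsilon23}, namely $A^{M_t}(\tau_s,\tau_n)\,x_1x_{n+1}=0$; (ii) a second application of the Hopf lemma, this time to $s_j$ in the $\tau_n$ direction, gives \eqref{epsilon22}, which forces $x_1\neq 0$; and (iii) the possibility $x_{n+1}=0$ is eliminated through the relation $x_{n+1}\nu_k=x_k\nu_{n+1}$ of \eqref{normalrelations} (a consequence of $s_k(X)=0$), which together with \eqref{epsilon22} and \eqref{epsilon21} rules out both $\nu_{n+1}=0$ and $x_k=0$. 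Only then does $A^{M_t}(\tau_s,\tau_n)=0$ follow, contradicting \eqref{epsilon21}. None of these three ingredients appears in your outline. Two further, smaller points: your dismissal of points lying on other reflection faces $\{x_m=0\}$, $m\notin\{j,k\}$, as ``already excluded'' is not justified by Proposition \ref{propepsilonboundariesn} (that result only excludes simultaneous vanishing of \emph{all} $s_i$), and to avoid circularity with Proposition \ref{propKilling} you must phrase the sign hypothesis as ``$s_k<0$ on $\Q^+\cap M_t$ up to the first time a zero maximum occurs'', as the paper does, rather than appealing to the preservation of \eqref{initialsphere}. Corner points with the Neumann and Dirichlet boundaries also need the separate frustum/barrier discussion given in the paper.
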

\begin{proof}%{{{
As mentioned above, conditions \eqref{initialsphere} and the reflective symmetry of the evolving hypersurfaces allows us to restrict the problem to the cone $\Q^+$.
Suppose there exists a $j\in\{1,\ldots,n\}$ such that for some point $X=F_t(p)\in \{x_j=0\}$, $p\in M^n$, there exists $k\in\{1,\ldots,n\}$, $k\neq j$, where we have
$s_k(X)=0$. Once again we have that $X$ is a boundary point for the subcone $\Q^+$.
Note that the smoothness of $s_j$ already implies $s_j=0$ on $\{x_j=0\}$.
Consider an orthonormal basis $\{{\tau}_1,\ldots,{\tau}_n\}$ of $T_X M_t$ such that
\begin{align*}
{\tau}_i|_X\ &\in T(M_t \cap \{x_j=0\}),\ \ \text{and}\\
{\tau}_n|_X(x_1,\ldots,x_{n+1}) \ &=\ (x_j,0,\ldots,0,-x_1,0,\ldots,0)|_X\\
&=\ {\nu}_{M_t\cap\{x_j=0\}}|_X \in T^{\perp}(M_t\cap \{x_j=0\}) \cap TM_t.
\end{align*}
Here we have used the fact that we can always find an $x_s\neq 0$ since the origin can never be one of the points discussed here.
Without loss of generality we can chose this to be $x_1$ which explains our choice of $\tau_n$. The arguments stand even if $k=1$.

At this particular point $X\in\{x_j=0\}$, due to $s_j=s_k=0$, the unit normal of the evolving hypersurfaces $M_t$ satisfies
\begin{align}
\nu_j=0\qquad \text{ and }\qquad x_{n+1}\nu_k=x_k\nu_{n+1}.\label{normalrelations}
\end{align}

If the point $X$ would be at the corner with the Dirichlet boundary where $\{x_{n+1}=0\}$ we use Lemma \ref{propInitialD} to see that $s_k=0$ only on the $\{x_k=0\}$ hyperplane, which would imply that the vector field
$K_k$ vanishes.
This contradicts relation \eqref{epsilon21} obtained through application of the Hopf Lemma below, which applies also in this case. The existence of a parabolic frustum, \cite{lieberman1996second} in which we can apply Hopf lemma at such a corner point is given by the nature of the Dirichlet boundary and the reflective symmetry.

Suppose that $X$ does not lie on a corner formed by the Neumann and Dirichlet boundaries.
Note that for $\Q^+\cap M_t$ we have $s_k<0$ and $X$ would be a first boundary point where a zero maximum would be attained.
We will now employ the maximum principle, using the parabolic evolution equation for $s_k$ and the Hopf Lemma to show that such a point $X$ can not exist.
First from the Hopf Lemma we see that at $X$ we have
\begin{align}
0&<\nabla_{\tau_n} s_k = \sum_{s=1}^n A^{M_t}(\tau_n,\tau_s)\IP{\tau_s}{K_k}+\IP{\D_{\tau_n}K_k}{\nu^{M_t}}\notag\\
&= \sum_{s=1}^{n-1} A^{M_t}(\tau_n,\tau_s)\IP{\tau_s}{K_k},\label{epsilon21}
\end{align}
where we have used that $\D_{\tau_n}K_k \equiv 0$ and $\IP{\tau_n}{K_k}=0$ when $x_j=0$ for $k \neq j$.
The same Hopf Lemma will apply for $j$ but different terms are non vanishing due to our choice of tangent vectors.
At $X$ we have again
\begin{align}
0&<\nabla_{\tau_n} s_j = \sum_{s=1}^n A^{M_t}(\tau_n,\tau_s)\IP{\tau_s}{K_j}+\IP{\D_{\tau_n}K_j}{\nu^{M_t}}\notag\\
&=  A^{M_t}(\tau_n,\tau_n)\IP{\tau_n}{K_j} + x_1\nu_{n+1}\notag\\
&=  x_1\big(-A^{M_t}(\tau_n,\tau_n)x_{n+1} +\nu_{n+1}),\label{epsilon22}
\end{align}
where we have used $\IP{\tau_s}{K_j}=0$ for all $s\in\{1,\ldots,n-1\}$ when $x_j=0$ since $\tau_s\in\{x_j=0\}$.
But also at $X$ found on $\{x_j=0\}$ we have $s_j=0$ so we can differentiate in directions tangent to the boundary contained in $\{x_j=0\}$ to obtain for all $s\in\{1,\ldots,n-1\}$
\begin{align}
0&=\nabla_{\tau_s}s_j=\nabla_{\tau_s}\IP{\nu^{M_t}}{K_j}=\sum_{p=1}^{n}A^{M_t}(\tau_p,\tau_s)\IP{\tau_p}{K_j}+\IP{\D_{\tau_s}K_j}{\nu^{M_t}}\notag\\
&= - A^{M_t}(\tau_s,\tau_n)x_1x_{n+1},\label{epsilon23}
\end{align}
where we have used $\nu_j=0$ (from the reflective symmetry) and $\tau_s\in\{x_j=0\}$ to show that $\IP{\D_{\tau_s}K_j}{\nu^{M_t}}=0$, also $\IP{\tau_p}{K_j}=0$ for all $p\neq n$ and $\IP{\tau_n}{K_j}=-x_1x_{n+1}$.
Since \eqref{epsilon22} implies $x_1\neq 0$, \eqref{epsilon23} shows that $A^{M_t}(\tau_s,\tau_n)x_{n+1}=0$ for all $s\in\{1,\ldots,n-1\}$.
If $x_{n+1}=0$ then \eqref{normalrelations} implies that either $x_k=0$ or $\nu_{n+1}=0$.
The latter, $\nu_{n+1}=x_{n+1}=0$, contradicts the strict sign of \eqref{epsilon22}.
If $x_k=0$ then $K_k\equiv 0$ since $x_{n+1}=0$ too.
This shows that $\IP{\tau_s}{K_k}=0$ and contradicts \eqref{epsilon21}.
So all that remains is $A^{M_t}(\tau_s,\tau_n)=0$ for all $s\in\{1,\ldots,n-1\}$ which again contradicts \eqref{epsilon21}.

If the point $X$ is found on the corner with the Neumann boundary then the same arguments apply. 
If $x_{n+1}=0$ and $\nu_{n+1}=0$ we again obtain a contradiction with the strict sign of \eqref{epsilon22}. The existence of a parabolic frustum, \cite{lieberman1996second} in which we can apply Hopf lemma at such a corner point is given by the ninety degree boundary contact condition on the sphere, which provides enough space for the frustum to exist.
This completes our proof and shows that the existence of such a point $X$ was false.
\end{proof}%}}}

At a point on the Neumann boundary where $s_i=0$ for some $i=1,\ldots,n$, the components of the second fundamental form satisfy certain relations which we now describe.

\begin{prop}
Let $M_t = F(M^n,t)$ be an RGMCF with $|\IP{F_0}{e_{n+1}}|\le1$ restricted in the positive subcone $\Q^+$.
Consider a point on the Neumann boundary $X=F_t(p)\in{\partial}_N M_t \subset \Sigma$ for some $p\in {\partial}_N M^n$ where for the first time we have
\[
s_i(X)=0
\]
for some $i=1,\ldots,n$.
Then, for an orthonormal basis $\{{\tau}_1,\ldots,{\tau}_n\}$ of $T_X M_t$
such that
\begin{align*}
{\tau}_i|_X\ \in T_X\partial_N M_t\ \ \text{and}\ \ {\tau}_n|_X\ =\
{\nu}^{\Sigma}|_X\ =\ {\nu}_{{\partial}_N M_t}|_X,
\end{align*}
we have
\begin{align*}
\sum_{s=1}^{n-1}\IP{\tau_s}{K_i}A^{M_t}\big|_X({\tau}_s,{\nu}^{\Sigma})\ >\ 0.
\end{align*}
\label{lemtiltsphere}
\end{prop}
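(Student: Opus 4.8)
The plan is to apply the parabolic Hopf boundary point lemma to the quantity $s_i$ at $X$, using the evolution equation of Proposition \ref{killingevolution} together with the ninety-degree contact of $M_t$ with $\Sigma$, and then to read off the claimed inequality from the formula for the conormal derivative of $s_i$.

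First I would recall from Proposition \ref{killingevolution} that
\[
\Big(\frac{d}{dt}-\Delta_{M_t}\Big)s_i\ =\ |A^{M_t}|^2\,s_i .
\]
Since $M_t$ is restricted to $\Q^+$ and $X$ is, by hypothesis, the first point in time at which $s_i$ vanishes there, we have $s_i\le 0$ on $\Q^+\cap M_\tau$ for all $\tau$ up to the time of $X$, with $s_i<0$ in the relative parabolic interior (by the strong maximum principle, since $s_i\not\equiv 0$ by \eqref{initialsphere}). Hence $|A^{M_t}|^2 s_i\le 0$, so $s_i$ is a subsolution of the heat equation on $\Q^+\cap M_t$ attaining its maximum value $0$ at the boundary point $X$ for the first time. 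Next I would locate $X$ among the boundary strata: it lies on $\partial_N M_t\subset\Sigma$ by assumption; one has $x_i(X)>0$ (otherwise $s_i\equiv 0$ on $\{x_i=0\}$ and $X$ is no genuine new zero), so Proposition \ref{propepsilonboundaries} forbids $X$ from lying on any reflection plane $\{x_j=0\}$ with $j\ne i$; and $\partial_N M_t$ and $\partial_D M_t$ are disjoint. Thus $X$ is a smooth relative-interior point of the Neumann boundary, where the perpendicular contact with $\Sigma$ furnishes a parabolic frustum touching at $X$, exactly as in the proof of Proposition \ref{propepsilonboundaries}. The Hopf lemma then yields
\[
\nabla_{\tau_n}s_i\big|_X\ >\ 0 ,
\]
where $\tau_n=\nu^\Sigma=\nu_{\partial_N M_t}$ is the outward unit conormal of $\partial_N M_t$ in $M_t$; note this points away from $M_t$ into the sphere, in accordance with our sign convention for $\nu^\Sigma$.

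It then remains to compute the conormal derivative. Differentiating $s_i=\IP{\nu^{M_t}}{K_i}$ along $\tau_n$ exactly as in the proof of Proposition \ref{killingevolution} gives
\[
\nabla_{\tau_n}s_i\ =\ \sum_{s=1}^n A^{M_t}(\tau_n,\tau_s)\IP{\tau_s}{K_i}\ +\ \IP{\D_{\tau_n}K_i}{\nu^{M_t}} .
\]
Writing $K_i(x)=L_i x$ with $L_i$ the (constant, antisymmetric) linear part and using $\nu^\Sigma(x)=-x$ on $\Sigma$, one finds $\IP{\tau_n}{K_i}=\IP{-x}{L_i x}=0$, which removes the $s=n$ term, and $\D_{\tau_n}K_i=L_i(-x)=-K_i$, so $\IP{\D_{\tau_n}K_i}{\nu^{M_t}}=-s_i(X)=0$. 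Therefore
\[
\nabla_{\tau_n}s_i\big|_X\ =\ \sum_{s=1}^{n-1}\IP{\tau_s}{K_i}\,A^{M_t}\big|_X(\tau_s,\nu^\Sigma) ,
\]
and combining this with the Hopf inequality proves the claim.

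The main obstacle is the placement of $X$: one must be sure it is a genuine interior point of $\partial_N M_t$ so that the Hopf lemma applies without a corner obstruction, which is exactly where Proposition \ref{propepsilonboundaries} (excluding the vanishing of $s_i$ on the ``wrong'' reflection planes) and the perpendicular contact condition (guaranteeing a parabolic frustum at the free boundary) are needed. The remaining computation is the same Killing-vector-field bookkeeping as in Proposition \ref{killingevolution}, now simplified by the identities $\IP{x}{K_i(x)}=0$ and $\D_{-x}K_i=-K_i$ that hold on the unit sphere.
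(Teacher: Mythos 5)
Your proposal is correct and follows essentially the same route as the paper: apply the Hopf boundary point lemma to $s_i$ (via the evolution equation of Proposition \ref{killingevolution}) at the first boundary zero, then compute $\nabla_{\tau_n}s_i$ and observe that the $\IP{\tau_n}{K_i}$ term and the $\IP{\D_{\tau_n}K_i}{\nu^{M_t}}$ term both vanish because $\nu^\Sigma$ is (minus) the position vector on the unit sphere and $s_i(X)=0$. Your extra care in placing $X$ away from the reflection planes and the Dirichlet corner is a reasonable elaboration of what the paper leaves implicit, but it is not a different argument.
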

\begin{proof}%{{{
From the conditions imposed on $s_i$ at and around the point $X$, $s_i$ has attained a boundary maximum at this point, after being negative everywhere in the interior.
Proposition \ref{killingevolution} shows that $s_i$ satisfies a parabolic evolution equation, allowing us to apply the Hopf Lemma at the point $X$:
\begin{align*}
0\ <\
{\nabla}_{{\tau}_n}s_i={\nabla}_{{\tau}_n}\IP{\nu^{M_t}}{K_i}=\sum_{i=1}^nA^{M_t}({\tau}_s,{\tau}_n)\IP{{\tau}_s}{K_i}
+\IP{{\D}_{{\tau}_n}K_i}{\nu^{M_t}},
\end{align*}
where we have used the Gauss-Weingarten equations to express
derivatives of the normal in tangential directions. Now we know
that at $X$ we have ${\tau}_n={\nu}^{\Sigma}=-{{\nu}_s}/|{\nu}_s|$, where ${\nu}_s(x_1,\ldots,x_{n+1})=(x_1,\ldots,x_{n+1})$ is the
position vector in ${\R}^{n+1}$ and it is always normal to the sphere
$\Sigma$. Then
\begin{align*}
\IP{{\D}_{{\tau}_n}K_i}{\nu^{M_t}}|_X\ =-\
\frac{1}{|{\nu}_s|}\IP{{\D}_{{\nu}_s}K_i}{\nu^{M_t}}\ =-\
\frac{1}{|{\nu}_s|}\IP{K_i}{\nu^{M_t}}\ =\ 0,
\end{align*}
since $s_i=\IP{\nu^{M_t}}{K_i}=0$ at $X$. Also we have  $\IP{{\tau}_n}{K_i}\ = \IP{\nu^{\Sigma}}{K_i}=\ 0$. We have thus shown that
\begin{align*}
0\ < \sum_{i=1}^{n-1}\ A^{M_t}|_X({\tau}_s,{\nu}^{\Sigma})\IP{\tau_s}{K_i},
\end{align*}
which gives us the desired result.
\end{proof}%}}}

The next result shows that we can preserve the sign of $s_i$ with the use of the extra conditions \eqref{initialsphere}.

\begin{prop}
Let $M_t = F(M^n,t)$ be an RGMCF with $|\IP{F_0}{e_{n+1}}|\le1$.
The flow preserves conditions \eqref{initialsphere} for all time.
\label{propKilling}
\end{prop}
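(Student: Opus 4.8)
As noted above, the reflective symmetry of the flow together with \eqref{initialsphere} lets us restrict attention to the positive subcone, so it suffices to show that each $s_i<0$ on $M_t^+:=M_t\cap\Q^+$ away from the reflection plane $\{x_i=0\}$, for every $t\in[0,T)$; on $\{x_i=0\}$ itself $s_i\equiv0$ because $s_i$ is odd under the reflection $x_i\mapsto-x_i$, which sends $K_i\mapsto-K_i$ and negates the $i$-th component of $\nu^{M_t}$. These conditions hold at $t=0$. If the conclusion failed, there would be a first time $t_0\in(0,T)$, an index $i$, and a point $X=F_{t_0}(p)\in M_{t_0}^+$ with $x_i(X)>0$ and $s_i(X)=0$, while every $s_k\le0$ throughout $M_t^+$ for $t\le t_0$, so that $s_i$ attains the value $0=\max_{M_{t_0}^+}s_i$ at $X$. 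The plan is to exclude each possible location of $X$ in turn.

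If $X$ lies in the interior of $M_{t_0}^+$, then by Proposition \ref{killingevolution} we have $(\tfrac{d}{dt}-\Delta_{M_t})s_i=|A^{M_t}|^2s_i\le0$ for $t\le t_0$ (since $s_i\le0$ there), so $s_i$ is a subsolution of the heat equation attaining its maximum $0$ at the interior space-time point $(X,t_0)$; the strong maximum principle forces $s_i\equiv0$ for $t\le t_0$, contradicting $s_i<0$ for $t<t_0$. If $X$ lies on a reflection plane $\{x_j=0\}$ with $j\ne i$ (or on an intersection of several of them), then Proposition \ref{propepsilonboundaries} gives $s_i(X)\ne0$, a contradiction. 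If $X$ lies on the relative interior of the Neumann boundary $\partial_NM_{t_0}$, choose an orthonormal frame $\tau_1,\dots,\tau_{n-1}\in T_X\partial_NM_{t_0}$ and $\tau_n=\nu^\Sigma$; then Proposition \ref{lemtiltsphere} gives $\sum_{s=1}^{n-1}\IP{\tau_s}{K_i}\,A^{M_{t_0}}(\tau_s,\nu^\Sigma)>0$, whereas for each $\tau_s$ with $s<n$ (which is tangent to $M_{t_0}\cap\Sigma$) Proposition \ref{sigmacurvaturerelated} gives $A^{M_{t_0}}(\tau_s,\nu^\Sigma)=-A^\Sigma(\tau_s,\nu^{M_{t_0}})$; since $\Sigma$ is round, $A^\Sigma$ is a constant multiple of the induced metric, so $A^\Sigma(\tau_s,\nu^{M_{t_0}})$ is proportional to $\IP{\tau_s}{\nu^{M_{t_0}}}=0$, and the displayed sum vanishes --- again a contradiction. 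Corners of $\partial_NM_{t_0}$ with a reflection plane are covered instead by Propositions \ref{propepsilonboundariesn} and \ref{propepsilonboundaries}, the parabolic frustum needed for the Hopf lemma being furnished there by the perpendicular contact with $\Sigma$; and $\partial_NM_{t_0}$ and $\partial_DM_{t_0}$ are disjoint.

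It remains to handle $X\in\partial_DM_{t_0}$, where one first argues that $\IP{\nu^{M_{t_0}}}{e_{n+1}}>0$ at $X$. The height function $\IP{F}{e_{n+1}}$ solves the heat equation on $M_t$, equals $h_0=0$ on $\partial_DM_t$, and satisfies $|\IP{F}{e_{n+1}}|\le1$ on $\partial_NM_t\subset\Sigma$, so the parabolic maximum principle and the hypothesis $|\IP{F_0}{e_{n+1}}|\le1$ preserve the height bound $|\IP{F_t}{e_{n+1}}|\le1$ for $t\le t_0$. A barrier argument at the fixed Dirichlet circle --- comparing $M_t$, near $\partial_DM_t$, with static minimal pieces (portions of catenoids whose waist radius is small enough that they are defined over a collar about $\partial_DM_t$, or shrinking spherical caps) passing through the Dirichlet circle and trapped above and below the flow by means of this height bound, as in \cite{thesisvulcanov,vmwheeler2012rotsym} --- then excludes a vertical normal, giving $\IP{\nu^{M_t}}{e_{n+1}}\ge c_0>0$ on $\partial_DM_t$. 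Since $h_0=0$, Lemma \ref{propInitialD} now implies that \eqref{initialsphere} holds on $\partial_DM_{t_0}$, and as $x_i(X)>0$ this forces $s_i(X)<0$, contradicting $s_i(X)=0$ (corners with $\{x_j=0\}$, $j\ne i$, being again handled by Proposition \ref{propepsilonboundaries}). All possibilities for $X$ are thus excluded, so the assumption fails and \eqref{initialsphere} is preserved on $[0,T)$.

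I expect the Dirichlet step to be the main obstacle. Every other location of $X$ is disposed of immediately by the propositions already in hand, the Neumann case needing only the extra observation that $\Sigma$ is umbilic, whereas ruling out a tilt on $\partial_DM_t$ requires the separate barrier construction; moreover one must take care to establish $\IP{\nu^{M_t}}{e_{n+1}}\ge c_0>0$ on $\partial_DM_t$ using only the preserved height bound and the confinement of $M_t$ outside $\Sigma$, rather than the full graphicality of the flow, which is only obtained afterwards in Theorem \ref{thmLongsphere}.
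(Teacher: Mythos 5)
Your proposal is correct and follows essentially the same route as the paper: the maximum principle on the subcone $\Q^+$ via Proposition \ref{killingevolution}, the reflection planes handled by Proposition \ref{propepsilonboundaries}, the Neumann boundary excluded by combining Proposition \ref{lemtiltsphere} with Stahl's relation and the umbilicity of $\Sigma$, and the Dirichlet boundary treated by a barrier/gradient argument together with Lemma \ref{propInitialD}. Your write-up merely makes explicit some steps the paper leaves terse (the interior strong maximum principle step and the details of the Dirichlet barrier), but the decomposition and key lemmas are the same.
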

\begin{proof}%{{{
The proof is based on the application of the maximum principle for $s_i$ on ${M_t}^{+}:=M_t\cap \Q^+$. For the convenience of the reader we remind here our sign convention on the subcone and the definition of our $n+2$ boundaries to $M_t^{+}$.
From the initial condition \eqref{initialsphere} we have that on ${M_0}^{+}$, $s_i(X_0)<0$, with zero boundary values on the boundary ${M_0}^{x_i=0}=M_0 \cap \{x_i=0\}$.
There are $n+1$ more boundaries of the domain: the free boundary at the intersection with the sphere $\Sigma$ which we denote by ${\partial}_N{M_0}^{+}$, the fixed Dirichlet boundary on the fixed radius outside the unit
sphere, which we denote by ${\partial}_D{M_0}^{+}$, and ${M_0}^{x_j=0}=M_0 \cap \{x_j=0\}$ for all $j\neq i$.

From Proposition \ref{killingevolution} and the maximum principle on ${M_t}^{+}$ we know that the sign of $s_i$ can be preserved for all times, if on the boundaries we do not get any `new' zero values (which also are
maximal values of $s_i$ on $\overline{{M_t}^{+}}$).
On the $n-1$ boundaries which come from the reflective symmetry we can not have a new $0$ value as shown in Proposition \ref{propepsilonboundaries}.
So we turn our attention to the two boundaries which can make a difference and change the sign of $s_i$.

First we need to exclude the possibility that $s_i$ might take a zero value on the Neumann boundary.
Suppose that there is a point $X=F_t(p)$ on ${\partial}_N{M_t}^{+}\subset {\Sigma}$ for some $p\in {\partial}_N M^n$ where we have for the first time in the evolution of the graph that $s_i(X)=0$.
At this point of the boundary we consider an orthonormal basis $\{{\tau}_1,\ldots,{\tau}_n\}$ of $T_X M_t$, chosen such that we have at $X$
\begin{align*}
{\tau}_i|_X\ \in T_X\partial_N M_t^{+}\ \ \text{and}\ \ {\tau}_n\ =\
{\nu}^{\Sigma}={\nu}_{{\partial}_N {M_t}^{+}}.
\end{align*}
Now using the result of Proposition \ref{lemtiltsphere} we see that at $X$
\begin{align}
0\ < \sum_{i=1}^{n-1}\ A^{M_t}|_X({\tau}_s,{\nu}^{\Sigma})\IP{\tau_s}{K_i}, \label{valueattilt}
\end{align}
where $A^{M_t}$ is the second fundamental form.
Using a result of Stahl \cite{thesisstahl}, which we quoted in Proposition \ref{sigmacurvaturerelated}, we know that
\[
A^{M_t}({\tau}_s,{\nu}^{\Sigma})=-A^{\Sigma}({\tau}_s,\nu^{M_t}).
\]
for all $s\in\{1,\ldots,n-1\}$.
This is helpful since at a boundary point the tangent space of $\Sigma$ is spanned by $\{{\tau}_1,\ldots,\tau_{n-1}, \nu^{M_t}\}$.
Since $\Sigma$ is a sphere and the basis is an orthogonal one, the directions defined by its vectors are the principal directions at the point $X$.
Thus the second fundamental form of $\Sigma$ is diagonal at $X$.
Using the relation between the off-diagonal elements of the second fundamental form of $M_t$ and $\Sigma$ we can see that
\[
A^{M_t}({\tau}_s,{\nu}^{\Sigma})=-A^{\Sigma}({\tau}_s,\nu^{M_t})=0
\]
for all $s\in\{1,\ldots,n-1\}$, which contradicts \eqref{valueattilt}.
Therefore there does not exist a point on the Neumann boundary where $s_i$ changes sign.

Now the other problem is if the $s_i$ quantity changes sign on the Dirichlet boundary.
This cannot be the case since we started with an initial graph in the $e_{n+1}$ direction.
The standard construction of barriers on Dirichlet boundaries shows that this relation is preserved for all times of existence.
Finally, using Lemma \ref{propInitialD} we see that on the Dirichlet boundary relation \eqref{graphcondition} for $\zeta=e_{n+1}$ is equivalent to $s_i$ being negative.

Using the reflective symmetry we complete the proof of conditions \eqref{initialsphere}.
\end{proof}%}}}

\begin{rmk}
The condition imposed on the initial height, that $|\IP{F_0}{e_{n+1}}|\leq 1$, is there to prevent the graphs from flowing to the North or South Pole of the sphere $\Sigma$, points in which the vector field $\xi$ is not
defined.
The height bound can be preserved in at least two ways.

One of them is by constructing radially symmetric barriers which are above and below the maximal height of the initial graph. Since the radially symmetric solutions have a height bound from the results of the previous
section, our general reflective symmetric graph also enjoys a height bound.

The second method is to use the same argumentation as \cite[Chapter 6]{thesisvulcanov} developed for general graphs.
The Neumann boundary condition and the convention that we take the unit normal to the sphere $\Sigma$ to be pointing away from the evolving surfaces implies $\IP{{\nu}^{\Sigma}}{e_{n+1}}\leq 0$ above the ${\R}^n$ plane
and the opposite sign below.
Using this one can prove that the height of the graphs remains bounded for all times by the initial bound.
Using the result of Theorem \ref{thmlongtimeexistencecatenoid} the initial height can be taken up to and including the maximal height of the sphere.
\end{rmk}

Perhaps a little surprisingly, one can show that while the gradient is bounded the mean curvature satisfies a \emph{uniform} bound.

\begin{prop}
Let $M_t = F(M^n,t)$ be an RGMCF with $|\IP{F_0}{e_{n+1}}|\le1$.
There exists an absolute constant $C<\infty$ such that
\begin{align*}
\sup_{M_t}|H| \leq C \sup_{M_0} |H|,
\end{align*}
for all $t\in[0,T)$.
\label{propCurvaturebound}
\end{prop}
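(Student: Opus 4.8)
The goal is a bound $\sup_{M_t}|H| \le C \sup_{M_0}|H|$ uniform in $t\in[0,T)$, so the natural strategy is to apply the maximum principle to $|H|$ — equivalently to $H^2$ — exploiting the RGMCF structure (the height bound $|\IP{F_0}{e_{n+1}}|\le1$, the preservation of conditions \eqref{initialsphere} from Proposition \ref{propKilling}, and the boundary identities in Proposition \ref{sigmacurvaturerelated}). First I would recall the interior evolution equation $(\partial_t - \Delta_{M_t})H = |A^{M_t}|^2 H$, so that $(\partial_t - \Delta_{M_t})H^2 = 2|A|^2H^2 - 2|\nabla H|^2$. The reaction term $2|A|^2H^2$ is the enemy for a pointwise bound, so a bare maximum principle on $H^2$ will not work directly; the quantity must be measured against something that controls $|A|^2$. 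The key observation — and the reason the statement is phrased as a \emph{uniform} bound — is that the Killing quantity $s_i = \IP{\nu^{M_t}}{K_i}$ (or rather the graph quantity $v = \IP{\nu^{M_t}}{\xi}^{-1}$, bounded by the already-established gradient bound) evolves with the \emph{same} reaction term $|A|^2 s_i$. Hence I would consider the ratio, e.g. $w := H^2 / \IP{\nu^{M_t}}{\xi}^2$ or $H / s_i$ on the subcone $\Q^+$, whose evolution equation has the $|A|^2$ reaction terms cancel, leaving only gradient/transport terms amenable to the maximum principle.

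**Key steps in order.** (1) Restrict to the positive subcone $M_t^+ = M_t \cap \Q^+$, as in the proof of Proposition \ref{propKilling}, where by that proposition $s_i < 0$ strictly in the interior and the gradient bound from the height bound gives $0 < -s_i \le \IP{\nu^{M_t}}{\xi}$ bounded away from $0$ and $\infty$. (2) Compute the evolution of $w = H^2/\IP{\nu^{M_t}}{\xi}^2$ (using Proposition \ref{killingevolution}), verifying that the $|A|^2$ terms cancel and that $(\partial_t - \Delta_{M_t})w \le \IP{b}{\nabla w}$ for a bounded vector field $b$, so a spatial interior maximum of $w$ cannot increase. (3) Handle the three boundary pieces: on the reflection hyperplanes $\{x_j=0\}$ use the reflective symmetry (a maximum there forces $\nabla_\nu w = 0$, contradicting Hopf, or is simply excluded by symmetry of the even extension); on the Dirichlet boundary use the compatibility condition $H=0$ there together with a standard barrier argument to see $w$ stays controlled; on the Neumann boundary use Proposition \ref{sigmacurvaturerelated}, specifically $\nabla_{\nu^\Sigma}H = H A^\Sigma(\nu^{M_t},\nu^{M_t})$ and $A^{M_t}(\tau_s,\nu^\Sigma) = -A^\Sigma(\tau_s,\nu^{M_t})$, to compute $\nabla_{\nu^\Sigma} w$ and show that at a would-be new boundary maximum the Hopf Lemma is contradicted (here the sphere being umbilic, so $A^\Sigma = -\,\mathrm{Id}$ up to scale with our sign convention, makes $\nabla_{\nu^\Sigma}H = -H$ and the cross terms vanish, which gives the sign needed). (4) Conclude that $\sup_{M_t^+} w \le \max(\sup_{M_0^+} w, \text{boundary contribution})$, and since $\IP{\nu^{M_t}}{\xi}$ is pinched between universal constants, translate the bound on $w$ into $\sup_{M_t}|H| \le C \sup_{M_0}|H|$; finally use reflective symmetry to pass from $M_t^+$ back to all of $M_t$.

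**The main obstacle.** The delicate point is the Neumann boundary analysis in step (3): one must verify that at a first interior-or-boundary maximum of $w$ occurring on $\partial_N M_t$, the Hopf Lemma forces $\nabla_{\nu^\Sigma} w > 0$, while the explicit computation of $\nabla_{\nu^\Sigma} w$ using Proposition \ref{sigmacurvaturerelated} gives a quantity with the wrong (non-positive) sign, yielding the contradiction. This requires care with the sign conventions (the paper's convention that $\nu^\Sigma$ points \emph{into} the sphere, so $\Sigma$ has negative curvature) and with the fact that $H=0$ is \emph{not} assumed on the Neumann boundary, only on the Dirichlet boundary, so the term $H A^\Sigma(\nu^{M_t},\nu^{M_t})$ genuinely contributes and must have a favorable sign. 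A secondary technical nuisance is the corner points where the Neumann and Dirichlet boundaries meet, and where reflection hyperplanes meet either boundary; as in Propositions \ref{propepsilonboundariesn} and \ref{propepsilonboundaries}, one argues that a suitable parabolic frustum exists there (provided by the ninety-degree contact on the sphere and by the reflective symmetry) so that the Hopf Lemma still applies. Everything else — the interior evolution, the ratio cancellation, the reflection-hyperplane boundaries — is routine given the machinery already assembled in this section.
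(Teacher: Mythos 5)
Your overall architecture is the paper's: an Ecker--Huisken type ratio of $H^2$ against the square of a Killing-type quantity on the subcone $\Q^+$, interior maximum principle after the $|A|^2$ terms cancel, $H\equiv 0$ on the Dirichlet boundary, symmetry at the reflection hyperplanes, and a Hopf Lemma argument at the Neumann boundary using Proposition \ref{sigmacurvaturerelated} and the umbilicity of the sphere. The genuine gap is your choice of denominator. The quantity you propose, $w=H^2/\IP{\nu^{M_t}}{\xi}^2$, does not do what you claim, for two reasons. First, $\xi=-\sum_i x_iK_i$ is \emph{not} a Killing field of $\R^{n+1}$ (its components are quadratic in $x$), so $\IP{\nu^{M_t}}{\xi}$ does not satisfy $(\partial_t-\Delta)(\cdot)=|A^{M_t}|^2(\cdot)$: since the coordinate functions are caloric along the flow one gets
\begin{equation*}
(\partial_t-\Delta)\IP{\nu^{M_t}}{\xi}=|A^{M_t}|^2\IP{\nu^{M_t}}{\xi}+2\sum_{i=1}^n\IP{\nabla x_i}{\nabla s_i},
\end{equation*}
and the extra first-order terms are not of the form $\IP{b}{\nabla w}$, so the claimed cancellation in your step (2) fails as stated. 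Second, and more decisively, the Neumann boundary step fails: $\xi$ is homogeneous of degree two, so at a boundary point $\D_{\nu^\Sigma}\xi=-\D_{\nu_s}\xi=-2\xi$, hence (the cross terms vanishing by umbilicity exactly as in the paper) $\nabla_{\nu^\Sigma}\IP{\nu^{M_t}}{\xi}=-2\IP{\nu^{M_t}}{\xi}$ while $\nabla_{\nu^\Sigma}H=-H$; the Hopf computation then gives $\nabla_{\nu^\Sigma}w=+2H^2/\IP{\nu^{M_t}}{\xi}^2\ge 0$, which is perfectly consistent with the Hopf inequality, so no contradiction is obtained and boundary maxima are not excluded. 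Your fallback $H/s_i$ fares no better, since $s_i$ vanishes identically on the reflection hyperplane $\{x_i=0\}$ and the ratio blows up there.

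The paper's actual choice, $H^2/(\sum_{i=1}^n s_i)^2$, is designed precisely to avoid all three problems: $\sum_i s_i$ is a constant-coefficient combination of Killing pairings, so it satisfies the clean equation and the $|A|^2$ terms cancel; it is strictly negative on all of $\Q^+$, including the reflection hyperplanes, by Proposition \ref{propepsilonboundaries}; and at the Neumann boundary $\nabla_{\nu^\Sigma}s_i=-s_i$ scales by the \emph{same} factor $-1$ as $\nabla_{\nu^\Sigma}H=-H$, so the Hopf derivative of the ratio is exactly zero, contradicting the strict inequality. A further (fixable) imprecision is your step (4): you invoke $\IP{\nu^{M_t}}{\xi}$ being ``pinched between universal constants,'' but no uniform-in-time positive lower bound on the graph quantity is available (the paper explicitly remarks that the gradient bounds are time-dependent). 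None is needed: the maximum principle gives the ratio at time $t$ in terms of its initial supremum, and one only needs $\inf_{M_0}|\sum_i s_i|>0$ together with the uniform upper bound $|\sum_i s_i|\le n\sup|\nu_s|$ coming from the height bound, which is how the paper concludes. Finally, at the reflection hyperplanes note that $\sum_i s_i$ is not even under the reflection (the component $s_j$ is odd across $\{x_j=0\}$), so the ``even extension'' shortcut does not apply directly; the paper instead uses the perpendicular contact of $M_t$ with the hyperplanes and cites the corresponding Neumann-boundary result.
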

\begin{proof}
The proof is based once again on the use of the maximum principle and the Hopf Lemma.
In the following we modify an idea of Ecker and Huisken \cite{ecker1989mce} allowing one to obtain a uniform curvature bound once a gradient bound is in-hand.
Proposition \ref{propKilling} gives us that the quantities $s_i$ preserve the strict negative sign on the quadrant ${M_t}^{+}$, which is equivalent to a gradient bound.
Also from Proposition \ref{propepsilonboundaries} we know that on any of the plane of symmetry the quantity $\sum_{i=1}^{n} s_i$ is strictly negative,that is non vanishing.

Consider the quantity $X\mapsto\frac{H^2}{(\sum_{i=1}^{n} s_i)^2}(X):M_t^{+}\rightarrow \R$.
Using the reflective symmetry we see that it is enough to work on $M_t^{+}$.
After the same computation as in \cite{ecker1989mce} and using the evolution of the mean curvature found in \cite{huisken1986cch} we find that $\frac{H^2}{(\sum_{i=1}^{n} s_i)^2}$ satisfies the parabolic evolution equation
\begin{align*}
\big(\frac{d}{dt}-{\Delta}^{M_t}\big)\ \frac{H^2}{(\sum_{i=1}^{n} s_i)^2} \ \leq \
2\frac{\nabla (\sum_{i=1}^{n} s_i)}{\sum_{i=1}^{n} s_i}\cdot \nabla{\frac{H^2}{(\sum_{i=1}^{n} s_i)^2}}.
\end{align*}
From the above evolution and the use of the maximum principle with the bounded vector field $a=\frac{\nabla (\sum_{i=1}^{n} s_i)}{\sum_{i=1}^{n} s_i}$, we see that as long as we exclude maxima of the above quantity on the boundaries we obtain the result.

The Dirichlet boundary ${\partial}_D M_t$ is a non-issue, since the compatibility condition $H|_{{\partial}_D M_0}\equiv0$ is preserved for all times (see \cite{thesisvulcanov} for more details on this).

On the Neumann boundary ${\partial}_N M_t$ we apply a Hopf Lemma argument.
Assume that there is a point $X=F(p,t)\in{\partial}_NM_t$ such that $\frac{H^2}{(\sum_{i=1}^{n} s_i)^2}$ attains a maximum at $X$.
At this point choose an orthonormal basis $\{{\tau}_1,\ldots,{\tau}_{n}\}$ of the tangent space $T_XM_t$ such that ${\tau}_i\in T{\partial}_N M_t$ for all $i\in\{1,\ldots,n-1\}$ and ${\tau}_n={\nu}^{\Sigma}$ at $X$.
Then the Hopf Lemma implies
\begin{align}
0\ <\ {\nabla}_{{\nu}^{\Sigma}}\frac{H^2}{(\sum_{i=1}^{n} s_i)^2}\ =\
2\frac{H}{(\sum_{i=1}^{n} s_i)^2}{\nabla}_{{\nu}^{\Sigma}}H\ -\
2\frac{H^2}{(\sum_{i=1}^{n} s_i)^3}{\nabla}_{{\nu}^{\Sigma}}(\sum_{i=1}^{n} s_i).
\label{hopfforcurvature}
\end{align}
Using Proposition \ref{sigmacurvaturerelated} we replace in the
first term
\begin{align*}
{\nabla}_{{\nu}^{\Sigma}}H\ =\ H
A^{\Sigma}(\nu^{M_t},\nu^{M_t})\ =\ -H,
\end{align*}
where we have used that $\Sigma$ is a sphere and that the unit
normal to $\Sigma$ points away from the evolving surfaces. We now
turn our attention to the second term in \eqref{hopfforcurvature},
and compute for any $i\in\{1,\ldots,n\}$:
\begin{align*}
{\nabla}_{{\nu}^{\Sigma}}s_i\ &=\
{\nabla}_{{\nu}^{\Sigma}}\IP{\nu^{M_t}}{K_i}\\
&=\ \sum_{s=1}^n A^{M_t}({\tau}_s,{\nu}^{\Sigma})\ \IP{{\tau}_s}{K_i}\ +\
\IP{\nu^{M_t}}{{\D}_{{\nu}^{\Sigma}}K_i}\\* &=\
\IP{\nu^{M_t}}{{\D}_{{\nu}^{\Sigma}}K_i},
\end{align*}
where we have used, as in the proof of Proposition
\ref{propKilling}, the relation
\begin{align*}
A^{M_t}({\tau}_s,{\nu}^{\Sigma})\ =\ -\
A^{\Sigma}({\tau}_s,\nu^{M_t})\ =\ 0,
\end{align*}
for all $s\in\{1,\ldots,n-1\}$ since ${\tau}_s\in T{\partial}_N M_t\subset T \Sigma$, ${\tau}_s$
is perpendicular to $\nu^{M_t}\in T \Sigma$, and $\Sigma$ is a
sphere. We have also used the fact that
$\IP{K_i}{{\nu}^{\Sigma}}=0$. Noting that
${\nu}^{\Sigma}=-{\nu}_s/|\nu_s|$, where we remind the reader that
${\nu}_s$ is the position vector, the last term in the above
computation simplifies to
\begin{align*}
{\nabla}_{{\nu}^{\Sigma}}s_i\ =\
\IP{\nu^{M_t}}{{\D}_{{\nu}^{\Sigma}}K_i}\ =\ -\
\IP{K_i}{\nu^{M_t}}\ =\ -s_i.
\end{align*}
Returning to \eqref{hopfforcurvature} we obtain a contradiction:
\begin{align*}
0\ < {\nabla}_{{\nu}^{\Sigma}}\frac{H^2}{(\sum_{i=1}^{n} s_i)^2}\ = \ -\
2\frac{H^2}{(\sum_{i=1}^{n} s_i)^2} \ +\ 2\frac{H^2}{(\sum_{i=1}^{n} s_i)^2}\ =\ 0.
\end{align*}
Therefore we do not have a maximum on the Neumann boundary for $\frac{H^2}{(\sum_{i=1}^{n} s_i)^2}$ at any positive time.

Due to the reflective symmetry on the $n$ boundaries given by $M_t \cap \{x_i=0\} $ for any $i\in\{1,\ldots,n\}$ we see that $\nu^{M_t} \in T\{x_i=0\}$, which tells us that the evolving mean curvature flow solution will be perpendicular on the hyperplanes of reflection. Therefore we have a mean curvature flow solution evolving with a ninety degree angle on a hyperplane. We can then use the results found in \cite{wheelermean}, Proposition 3.7 to exclude the appearance of maximum points on the $n$ boundaries given by the reflective hyperplanes.
Thus
\begin{align*}
\sup_{M_t}|H|\ \leq\
\frac{\sup_{M_t}|\sum_{i=1}^{n} s_i|}{\inf_{M_0}|\sum_{i=1}^{n} s_i|}\sup_{M_0}|H|.
\end{align*}
Noting that $\displaystyle \sup_{M_t}|\sum_{i=1}^{n} s_i|\leq \sup_{M_t}\sum_{i=1}^n|K_i|\leq
n\sup_{M_t}|{\nu}_s|\leq n\sup_{M_0}|{\nu}_s|$, and using the fact
that the height is bounded by the initial bound (see Lemma \ref{LMuniformheight})
gives us the existence of the global constant $C<\infty$ as
desired.
\end{proof}

We are now able to prove Theorem \ref{thmLongsphere}.

\begin{proof}[Proof of Theorem \ref{thmLongsphere}.]
As long as the immersion exists the non-tilting result from Proposition \ref{propKilling} can be applied to each of the quantities $s_i$.
This gives that relation \eqref{graphcondition} for $\zeta=\xi$ is preserved for all time.
We can therefore write our immersions as graphs in the $\xi$ direction for all time.
The sign preservation of \eqref{graphcondition} for $\zeta=e_{n+1}$ comes from the parabolic evolution that the quantity $\IP{\nu^{M_t}}{e_{n+1}}$ satisfies on the interior and the fact that the bad behaviour on the two
boundaries, Neumann and Dirichlet, for this quantity is equivalent to bad behaviour for the quantity $\IP{\nu^{M_t}}{\xi}$, which is prevented by the Proposition \ref{propKilling}.
\end{proof}

\begin{rmk}[Time dependent gradient bounds]
The sign preservation of the relation \eqref{graphcondition} provides us with a bound for the gradient of the associated scalar function.
By preserving for all times the positivity of the quantity $\IP{\nu^{M_t}}{e_{n+1}}$ we know that for all times of existence the surfaces can be written as a graph in the $\xi$ direction.
The bound is not uniform in time, hence for a long time existence result one would also require bounds on the full second fundamental form of the evolving surfaces.
The problem comes from the fact that the result of Proposition \ref{propKilling} is strongly dependent on the smoothness of the surface.

The usual proof of long time existence can take one of two paths.
One either provides bounds for all derivatives of the immersion for all times as done in \cite{ecker1989mce}, or refers to standard parabolic theory applied to the associated scalar evolution.
Bounding all the derivatives of the immersion requires information about these on the Neumann boundary, which at the moment we do not have.

In trying to apply the second approach we have encountered the following problem.
The associated scalar graph evolution for the problem \eqref{immersion} is quasilinear parabolic with an oblique derivative boundary condition on one of the boundaries and a Dirichlet condition on the other.
The long time existence theorems for these types of problems, as one can see from for example Corollary 8.10 and Theorem 8.3 in \cite{lieberman1996second}, require estimates on the $H_{1+\alpha}$ (for $\alpha\in(0,1)$)
norm independent of time.
Our gradient estimates are time dependent (in a non-obvious way), so obtaining $H_{1+\alpha}$ estimates from bounds on the height and gradient provides us with a time dependent bound, without any control on how the bound
grows in time.
To our knowledge this can be overcome if we know that for all times we have a hypersurface of class $C^2$.
Then, even at some finite final time we are able to apply the non-tilting arguments and obtain bounds on the gradient and then restart the flow.
\end{rmk}

\section{Mean concave (convex) graphs}
Let us first define precisely which setting we will be working in for this section.

\begin{defn}[GMCF${}^{H\le0}$]
We say that $M_t = F(M^n,t)$ is a graphical mean curvature flow with $H<0$ outside a sphere (\gmcfh) if
\begin{enumerate}
\item[(i)]   $M_t = F(M^n,t)$ is a one-parameter family of hypersurfaces evolving by mean curvature flow outside a standard sphere sphere in accordance with \eqref{immersion};
\item[(ii)]  $|h_0| < 1$;
\item[(iii)] $H(p,0)<0 $ for all $p\in \overline{M^n}\backslash \partial_DM^n$ (note that by the compatibility condition $H(p,t) = 0$ for all $(p,t)\in\partial_DM^n\times[0,T)$);
\item[(iv)]  the graphicality condition \eqref{graphcondition} holds for $\zeta = e_{n+1}$ and $t=0$.
\end{enumerate}
\end{defn}
The analogue of convex mean curvature flow of graphs can be defined by reversing the sign of the mean curvature assumptions in (\gmcfh). Below we treat the mean concave case, but all arguments carry through analogously in the case of mean convex initial data by a simple reflection.
Note that if $M_t = F(M^n,t)$ is a (\gmcfh) then it need only be \emph{initially} graphical and have \emph{initially} negative mean curvature.
That these properties are preserved follows from the work in this section.

Our goal is to prove Theorem \ref{thmSombrero}.
We shall establish the theorem by proving the following:
\vspace*{0.5cm}

\begin{tabular}{rp{3in}}
(Lemma \ref{LMconcavitypreserved})
 & Preservation of interior negativity of the mean curvature while the second fundamental form is bounded;
\\
(Lemma \ref{LMuniformheight})
 & Uniform height bounds;
\\
(Lemma \ref{LMuniformgradient})
 & Uniform gradient bounds for graphical solutions with non-positive mean curvature, two intrinsic dimensions, and initiall positive height; and
\\
(Lemma \ref{LMminimal})
 & Global in time uniformly bounded solutions converge to pieces of minimal surfaces.
\end{tabular}

\vspace*{0.5cm}
Given the above, the theorem then may then be proved as follows: Uniform bounds on $u$ and $Du$ follow by combining Lemma \ref{LMconcavitypreserved}, Lemma \ref{LMuniformheight}, and Lemma \ref{LMuniformgradient} on the
time interval $[0,T-\delta]$, where $T$ is the maximal time and $\delta>0$ is arbitrarily small.
The scalar evolution equation then becomes uniformly parabolic, and uniform estimates for all derivatives of the solution follow, in particular, uniform estimates for the second fundamental form.
This implies that the second fundamental form is uniformly bounded and we may conclude that Lemma \ref{LMconcavitypreserved} holds for all time.
We therefore conclude global existence.
Identification of the limit is a well-known standard argument (Lemma \ref{LMminimal}) which we have included here only for completeness.\\

We start with a preservation of the sign of the mean curvature. The extra perturbation term in the proof of the following Lemma is necessary to exclude the sensitive case of zeros propagating from the Dirichlet boundary into the interior without a maximum point at that zero.
\begin{lem}
\label{LMconcavitypreserved}
Suppose $M_t = F(M^n,t)$ is a \gmcfh.
Then for all $\delta > 0$ we have $H(p,t)<0$ for all $(p,t)\in (\overline{M^n}\backslash \partial_DM^n) \times [0,T-\delta]$.
\end{lem}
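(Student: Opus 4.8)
The plan is to run the parabolic maximum principle on the mean curvature $H$, which evolves by
\[
\Big(\frac{d}{dt}-\Delta_{M_t}\Big)H=|A^{M_t}|^2H,
\]
exactly as recalled in the proof of Proposition~\ref{propCurvaturebound}. Since the flow is smooth on $[0,T)$ and $\overline{M^n}$ is compact, for each fixed $\delta>0$ there is a finite constant $\Lambda=\Lambda(\delta)$ with $|A^{M_t}|^2\le\Lambda$ on $[0,T-\delta]$, so on $\overline{M^n}\times[0,T-\delta]$ the above is a uniformly parabolic linear equation with bounded zeroth-order coefficient. By definition of a \gmcfh\ we have $H<0$ on $\overline{M^n}\setminus\partial_DM^n$ at $t=0$, while the compatibility condition, preserved for all times (see \cite{thesisvulcanov}), gives $H\equiv0$ on $\partial_DM^n$.

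First I would establish the weak inequality $H\le0$ on $[0,T-\delta]$ via a perturbation. For $\varepsilon>0$ fix $\lambda>\Lambda$ and set $\widehat H_\varepsilon:=H-\varepsilon e^{\lambda t}$. Then $\widehat H_\varepsilon<0$ at $t=0$ and, crucially, $\widehat H_\varepsilon=-\varepsilon e^{\lambda t}<0$ on the Dirichlet boundary: the perturbation separates the test quantity strictly from zero along $\partial_DM^n$, so a first zero of $\widehat H_\varepsilon$ cannot originate there nor creep inward from it. Let $(p_0,t_0)$ be the first time and place where $\widehat H_\varepsilon$ attains the value $0$, so $\widehat H_\varepsilon<0$ everywhere for $t<t_0$ and $\widehat H_\varepsilon(\cdot,t_0)\le0$. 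If $p_0$ is an interior point it is a spatial maximum with $\partial_t\widehat H_\varepsilon\ge0$ and $\Delta_{M_t}\widehat H_\varepsilon\le0$ there, while $H=\varepsilon e^{\lambda t_0}>0$ at $(p_0,t_0)$, whence
\[
\Big(\frac{d}{dt}-\Delta_{M_t}\Big)\widehat H_\varepsilon=|A^{M_t}|^2H-\lambda\varepsilon e^{\lambda t_0}=\varepsilon e^{\lambda t_0}\big(|A^{M_t}|^2-\lambda\big)<0,
\]
contradicting $(\frac{d}{dt}-\Delta_{M_t})\widehat H_\varepsilon\ge0$. If instead $p_0$ lies on $\partial_NM_t\subset\Sigma$, then by Proposition~\ref{sigmacurvaturerelated} together with $\Sigma$ being the unit sphere with $\nu^\Sigma$ pointing away from $M_t$ one has $\nabla_{\nu^\Sigma}\widehat H_\varepsilon=\nabla_{\nu^\Sigma}H=HA^\Sigma(\nu^{M_t},\nu^{M_t})=-H=-\varepsilon e^{\lambda t_0}<0$, contradicting the strict positivity of the outward conormal derivative forced by the Hopf Lemma at a boundary maximum of the (strict) subsolution $\widehat H_\varepsilon$ — the parabolic frustum needed at the free boundary being available from the perpendicular contact, exactly as in Propositions~\ref{propepsilonboundaries} and~\ref{propCurvaturebound}. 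Hence $\widehat H_\varepsilon<0$ on $\overline{M^n}\times[0,T-\delta]$ for every $\varepsilon>0$, and letting $\varepsilon\to0$ gives $H\le0$ there.

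With $H\le0$ in hand I would upgrade to the strict inequality. Now $H$ itself is a subsolution, $(\frac{d}{dt}-\Delta_{M_t})H=|A^{M_t}|^2H\le0$, with $H(\cdot,0)<0$ on $\overline{M^n}\setminus\partial_DM^n$. If $H(p_0,t_0)=0$ at an interior $p_0$, the strong maximum principle for this linear equation (coefficient $|A^{M_t}|^2$ bounded) forces $H\equiv0$ for all $t\le t_0$, contradicting $H(\cdot,0)<0$; and $H(p_0,t_0)=0$ on $\partial_NM_t$ is again excluded by the Hopf Lemma together with $\nabla_{\nu^\Sigma}H=-H$. Since $H\equiv0$ on $\partial_DM^n$ is what is asserted there, this yields $H<0$ on $(\overline{M^n}\setminus\partial_DM^n)\times[0,T-\delta]$.

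The main obstacle is precisely the Dirichlet boundary: because $H\equiv0$ on $\partial_DM^n$ for all time, the naive ``first time $H$ touches zero'' dichotomy is vacuous there, and one must push the evolving quantity strictly below zero along $\partial_DM^n$ — which the term $-\varepsilon e^{\lambda t}$ does, using only the finiteness of $\Lambda$ on $[0,T-\delta]$ — before any maximum-principle/Hopf argument can run. The remaining delicate point is at the free boundary: correctly extracting the conormal identity $\nabla_{\nu^\Sigma}H=-H$ from Proposition~\ref{sigmacurvaturerelated}, and ensuring the parabolic frustum needed for the Hopf Lemma exists there.
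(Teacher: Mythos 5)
Your proof is correct and follows essentially the same route as the paper: bound $|A^{M_t}|^2$ on $[0,T-\delta]$, perturb $H$ so it is strictly negative on the Dirichlet boundary (the paper uses $Q=He^{-\lambda t}-\varepsilon t$, you use $H-\varepsilon e^{\lambda t}$, an equivalent device), exclude interior first-touch maxima by the evolution equation, exclude Neumann-boundary maxima via the Hopf Lemma and Stahl's identity $\nabla_{\nu^\Sigma}H=HA^\Sigma(\nu^{M_t},\nu^{M_t})=-H$, and then let $\varepsilon\to0$. Your explicit strong-maximum-principle upgrade from $H\le0$ to strict negativity is a slightly more careful rendering of a step the paper merely asserts, but the argument is the same in substance.
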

\begin{proof}%{{{
First note that on the time interval $[0,T-\delta]$ the second fundamental form is bounded uniformly.
Let us define $\lambda < \infty$ by setting
\[
\lambda = \sup_{(p,t)\in \overline{M^n}\times[0,T-\delta]} |A^{M_t}|^2(p,t)\,.
\]
In order to prove the lemma we consider the quantity $Q = He^{-\lambda t} - \varepsilon t$.
The evolution of $Q$ is given by
\[
(\partial_t - \Delta)Q = (|A^{M_t}|^2 - \lambda)He^{-\lambda t} - \varepsilon\,.
\]
Note that $Q(p,0) = H(p,0) < 0$ for $p\in M^n$.
Now on the Dirichlet boundary $Q = -\varepsilon t < 0$ for all $t>0$, and so $Q$ may not exceed its initial values on the Dirichlet boundary.
Furthermore, on $M^n$, if $Q$ exceeds its initial maximal value there must exist a new maximum for $Q$ at some point $(p_0,t_0)\in M^n\times[0,T-\delta]$ where $Q(p_0,t_0) = 0$ and so at this point
\begin{align*}
0 &\le (\partial_t - \Delta)Q(p_0,t_0)
\\
  &= (|A^{M_{t_0}}|^2 - \lambda)H(p_0,t_0)e^{- \lambda t_0} - \varepsilon
\\
  &= (|A^{M_{t_0}}|^2 - \lambda)\varepsilon t_0 - \varepsilon
\\
  &< 0\,,
\end{align*}
which is a contradiction.
Since $Q(p,0) = 0$ for $p\in \partial_DM^n$, it may have been possible that a zero could propagate instantaneously into the interior while on the Dirichlet boundary $Q$ is being dragged downward.
But this is also impossible, since it is not possible for $Q$ to attain a new interior positive maximum; indeed, suppose such a maximum occurs at $(p_1,t_1)$ where $Q(p_1,t_1) = \alpha = H(p_1,t_1)e^{-\lambda t_1} -
\varepsilon t_1$.
Rearranging, this implies
\[
H(p_1,t_1) = e^{\lambda t_1}(\alpha + \varepsilon t_1)
\]
and so, computing at the point $(p_1,t_1)$, we have
\begin{align*}
0 &\le (\partial_t - \Delta)Q(p_1,t_1)
\\
  &= (|A^{M_{t_1}}|^2 - \lambda)H(p_1,t_1)e^{- \lambda t_1} - \varepsilon
\\
  &= (|A^{M_{t_1}}|^2 - \lambda)(\alpha + \varepsilon t_1) - \varepsilon < 0\,.
\end{align*}

Finally, by Proposition \ref{sigmacurvaturerelated} and the Hopf Lemma, if a new maximum for $Q$ occurs at $(p_0,t_0)\in \partial_NM^n\times[0,T-\delta]$ satisfying $Q(p_0,t_0) = 0$ we must have
\[
0 < \nabla_{\nu^\Sigma}Q(p_0,t_0) = e^{-\lambda t}(H(p_0,t_0) A^\Sigma(\nu^{M_t}, \nu^{M_t})) = e^{-\lambda t} (- H(p_0,t_0)) = -\varepsilon t < 0\,,
\]
again a contradiction.

Therefore there can be no new maximum above the initial values for $Q$.
That is,
\begin{align*}
H(p,t)e^{-\lambda t} - \varepsilon t
 &= Q(p,t)
\\
 &\le
           \sup_{(p,t)\in \overline{M^n}\times\{0\}} Q(p,t)
\\
 &=         \sup_{(p,t)\in \overline{M^n}\times\{0\}} H(p,t)
\\
 &=         \sup_{p\in \overline{M^n}} H(p,0)
 = 0\,,
\end{align*}
and so, taking $\varepsilon\rightarrow0$, we conclude $H\le0$ on $\overline{M^n}\times[0,T-\delta]$ with $H(p,t)<0$ for all $(p,t)\in (\overline{M^n}\backslash \partial_DM^n) \times [0,T-\delta]$, as required.
\end{proof}%}}}

As in earlier sections, it is possible to obtain a priori height bounds through use of the comparison principle.
Below we show that there is also a direct argument based on the evolution equation of the height.

\begin{lem}
\label{LMuniformheight}
Suppose $M_t = F(M^n,t)$ is a mean curvature flow solution satisfying \eqref{immersion}.
Then for all $(p,t)\in\overline{M^n}\times[0,T)$ we have $|\IP{F}{e_{n+1}}| \leq C$, where $C$ depends only on $F_0$.
\end{lem}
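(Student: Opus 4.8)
The plan is to control the height function $u := \IP{F}{e_{n+1}}$ by means of a maximum principle argument on $\overline{M^n}\times[0,T)$, using that along the interior flow $u$ satisfies the heat equation $(\partial_t - \Delta)u = 0$, and then carefully handling the two boundary components. First I would recall the standard computation $(\partial_t - \Delta_{M_t})\IP{F}{e_{n+1}} = 0$, which follows from $\partial_t F = -H\nu^{M_t}$ together with $\Delta_{M_t}F = -H\nu^{M_t}$ (the mean curvature vector of the immersion). Consequently $u$ attains its spatial maximum and minimum on the parabolic boundary $\big(\overline{M^n}\times\{0\}\big)\cup\big(\partial M^n\times[0,T)\big)$, where $\partial M^n = \partial_D M^n \cup \partial_N M^n$.

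On the Dirichlet boundary $\partial_D M^n$ the height is literally constant in time: by the Dirichlet condition in \eqref{immersion} we have $\IP{F(p,t)}{e_{n+1}} = \IP{F_0(p)}{e_{n+1}} = h_0$ for all $t$, so $u \equiv h_0$ there and contributes nothing new. The work is therefore concentrated on the Neumann boundary $\partial_N M^n \subset \Sigma$, and here I would invoke the Hopf Lemma. Suppose $u$ attains a new maximum (strictly above its initial maximum and above $h_0$) at some $(p_0,t_0)\in\partial_N M^n\times(0,T)$; then the Hopf Lemma forces $\nabla_{\nu^\Sigma} u(p_0,t_0) > 0$. But since $\nu^{M_t}$ is tangent to $\Sigma$ along $\partial_N M^n$ (the Neumann condition), the outward conormal of $M_t$ along $\partial_N M_t$ is exactly $\nu^\Sigma$, and we compute $\nabla_{\nu^\Sigma} u = \IP{D_{\nu^\Sigma}F}{e_{n+1}} = \IP{\nu^\Sigma}{e_{n+1}}$. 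On a unit sphere centred at the origin, $\nu^\Sigma = -(x_1,\dots,x_{n+1})$ (the inner normal, per the paper's convention), so $\IP{\nu^\Sigma}{e_{n+1}} = -x_{n+1} = -u(p_0,t_0)$. Thus $0 < \nabla_{\nu^\Sigma}u(p_0,t_0) = -u(p_0,t_0)$, which says $u(p_0,t_0) < 0$; a symmetric argument applied to $-u$ (or to a new minimum) gives $u < 0$ impossible at a new minimum. This sign information is precisely what rules out $u$ escaping its initial bounds: a new positive maximum would require $u(p_0,t_0) < 0$, a contradiction once we take $C \ge \max\{\sup_{M_0}|u|,\,|h_0|,\,1\}$ — more carefully, a new maximum exceeding $\max\{\sup_{M_0} u,\,h_0\}$ would in particular be positive (as the graph starts above the $\R^n$-plane region or one can just compare against the value $0$ on $\Sigma$'s equator), contradicting $u(p_0,t_0)<0$; the minimum is handled symmetrically.

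The main obstacle is the bookkeeping at the \emph{corners} where $\partial_N M^n$ meets $\partial_D M^n$, and more subtly, justifying the Hopf Lemma at the Neumann boundary: one needs an interior sphere (parabolic frustum) condition at boundary points of $M_t$, which is available because $\Sigma$ is smooth and the contact is perpendicular — exactly the kind of argument already used repeatedly in the paper (e.g. in Propositions \ref{propepsilonboundariesn} and \ref{propepsilonboundaries}). I would state this reduction explicitly and then observe that the resulting bound $|\IP{F}{e_{n+1}}| \le \max\{\sup_{\overline{M^n}}|\IP{F_0}{e_{n+1}}|,\ 1\}$ depends only on $F_0$ (since $|h_0| < 1 \le$ the stated constant in the cases of interest, and in general $h_0$ is determined by $F_0$), which is the claimed conclusion with $C := \max\{\sup_{\overline{M^n}}|\IP{F_0}{e_{n+1}}|,\ |h_0|\}$.
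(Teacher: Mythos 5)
Your proposal is correct and follows essentially the same route as the paper: constancy of the height on the Dirichlet boundary, the heat equation $(\partial_t-\Delta)\IP{F}{e_{n+1}}=0$ in the interior, and the Hopf Lemma at the Neumann boundary combined with $\nu^\Sigma=-(x_1,\ldots,x_{n+1})$ on the unit sphere, so that $\nabla_{\nu^\Sigma}\IP{F}{e_{n+1}}=-\IP{F}{e_{n+1}}$ rules out new extrema there. The only cosmetic difference is that the paper runs the argument on $Q=\IP{F}{e_{n+1}}^2$, for which $(\partial_t-\Delta)Q\le 0$ and the boundary term $2u\IP{\nu^\Sigma}{e_{n+1}}=-2u^2\le 0$ kills both the maximum and the minimum at once, whereas you work with $u$ itself and handle the sign by padding the constant with $0$; both versions are valid.
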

\begin{proof}%{{{
Let us set $u(p,t) = \IP{F}{e_{n+1}}$. The evolution of $u$ is
\[
(\partial_t - \Delta)u = 0\,.
\]
In order to prove the lemma we consider the quantity $Q(p,t) = u^2(p,t)$.The evolution of $Q$ is
\[
(\partial_t - \Delta)Q = -2|\nabla u|^2 \,.
\]
We shall prove that $Q$ may not exceed its initial values. Based on its parabolic evolution the maximum principle tells us that $Q$ will be bounded by the maximum between the boundary values and its initial values.
On the Dirichlet boundary $u = h_0$ (recall the role that $h_0$ plays in \eqref{immersion}) and so $Q(p,t) = h_0^2 = Q(p,0)$ for $(p,t)\in\partial_DM^n\times(0,T)$.
If the new maximum occurs on the Neumann boundary by the Hopf Lemma this would imply for a choice of an orthonormal basis of the tangent space at that point as in Lemma \ref{propCurvaturebound}
\[
0 < 2u\IP{\nu^\Sigma}{e_{n+1}}\,.
\]
Since $\Sigma$ is a sphere, it is easy to see that for any $u$ we have $u\IP{\nu^\Sigma}{e_{n+1}} \le 0$, contradicting the above equation.

Therefore
\[
u^2(p,t) \le \max\{h_0^2,\sup_{p\in M^n}u^2(p,0)\} := \sqrt{C}
\]
where $C$ depends only on $F_0$ completing our proof.

\end{proof}%}}}

\begin{lem}
\label{LMuniformgradient}
Suppose $M_t = F(M^n,t)$ is a \gmcfh of surfaces, that is, $n=2$, with $\IP{F_0}{e_3} > 0$.
Then there exists an $s_0$ depending only on $F_0$ such that
\[
s(p,t) = \IP{\nu^{M_t}(p,t)}{e_3} \ge s_0
\]
for $(p,t) \in \overline{M^n}\times[0,T)$.
\end{lem}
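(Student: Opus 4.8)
The plan is to derive the estimate from an upper bound on the auxiliary quantity $v := \IP{\nu^{M_t}}{e_3}^{-1}$ (so that $s = 1/v \ge 1/\sup v$), using a good evolution equation for $v$ together with barrier control on the two boundaries. Since $\IP{\nu^{M_t}}{e_3}$ evolves by $(\partial_t - \Delta)\IP{\nu^{M_t}}{e_3} = |A^{M_t}|^2\IP{\nu^{M_t}}{e_3}$ (established exactly as Proposition \ref{killingevolution}, now for the translation Killing field $e_3$), while $\IP{\nu^{M_t}}{e_3}>0$ the quantity $v$ satisfies
\[
\big(\partial_t - \Delta + |A^{M_t}|^2\big)v = -\,2\IP{\nu^{M_t}}{e_3}\,|\nabla v|^2 \;\le\; 0,
\]
an inequality with nonnegative zeroth order coefficient. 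Since $v>0$, the parabolic maximum principle forces $v$ to attain its maximum over $\overline{M^n}\times[0,T-\delta]$ on the initial slice, on $\partial_DM^n$, or on $\partial_NM^n$; a routine continuation argument keeps $\IP{\nu^{M_t}}{e_3}$ positive so that $v$ stays finite. On the initial slice $v$ is bounded by compactness and the hypothesis $\IP{F_0}{e_3}>0$. On the fixed Dirichlet boundary, where the compatibility condition $H\equiv 0$ is preserved, the standard barrier construction for fixed boundaries (cf.\ \cite{thesisvulcanov,lieberman1996second}) bounds $v$ by a constant $C_D$ depending only on $F_0$, uniformly in $t$.

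The essential point is the Neumann boundary, and it is here that $n=2$ and $H\le0$ are used. Suppose $v$ attains a new maximum at $X=F_t(p_0)\in\partial_NM_t$, away from the Neumann--Dirichlet corner (treated by a parabolic frustum as in Section 3). Pick an orthonormal frame $\{\tau_1,\tau_2\}$ of $T_XM_t$ with $\tau_1\in T_X\partial_NM_t$ and $\tau_2=\nu^\Sigma$. Perpendicular contact and $\Sigma$ being a unit sphere give, via Proposition \ref{sigmacurvaturerelated}, that $A^{M_t}(\tau_1,\nu^\Sigma)=-A^\Sigma(\tau_1,\nu^{M_t})=0$, so $A^{M_t}$ is diagonal in this frame; hence
\[
\nabla_{\tau_1}\IP{\nu^{M_t}}{e_3}=A^{M_t}(\tau_1,\tau_1)\IP{\tau_1}{e_3},\qquad \nabla_{\nu^\Sigma}\IP{\nu^{M_t}}{e_3}=A^{M_t}(\nu^\Sigma,\nu^\Sigma)\IP{\nu^\Sigma}{e_3}.
\]
As $X$ maximises $v$ along the boundary curve at time $t$, we have $\nabla_{\tau_1}v=0$, so $A^{M_t}(\tau_1,\tau_1)\IP{\tau_1}{e_3}=0$. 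If $\IP{\tau_1}{e_3}\neq0$, then $A^{M_t}(\tau_1,\tau_1)=0$, whence $A^{M_t}(\nu^\Sigma,\nu^\Sigma)=H\le0$ by Lemma \ref{LMconcavitypreserved}; since on $\Sigma$ one has $\IP{\nu^\Sigma}{e_3}=-\IP{F}{e_3}<0$ (the strict positivity of $\IP{F}{e_3}$ being preserved by the maximum principle for the harmonic function $\IP{F}{e_3}$, exactly as in Lemma \ref{LMuniformheight}), we obtain
\[
\nabla_{\nu^\Sigma}v=-\,\frac{A^{M_t}(\nu^\Sigma,\nu^\Sigma)\IP{\nu^\Sigma}{e_3}}{\IP{\nu^{M_t}}{e_3}^{2}}\;\le\;0,
\]
which contradicts the Hopf Lemma, as $\nu^\Sigma$ is the outward conormal of $M_t$ along $\partial_NM_t$ and so $\nabla_{\nu^\Sigma}v>0$ at an outer boundary maximum. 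If instead $\IP{\tau_1}{e_3}=0$, then $\{\tau_1,\nu^\Sigma,\nu^{M_t}\}$ is an orthonormal basis of $\R^3$ with $e_3\perp\tau_1$, so $\IP{\nu^\Sigma}{e_3}^2+\IP{\nu^{M_t}}{e_3}^2=1$, giving $\IP{\nu^{M_t}}{e_3}=\sqrt{1-\IP{F}{e_3}^2}\ge\sqrt{1-\max\{h_0^2,\sup_{M_0}\IP{F_0}{e_3}^2\}}=:s_2>0$ by the preserved strict height bound; thus $v\le 1/s_2$ at $X$.

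Combining the three boundary contributions gives $v\le\max\{\sup_{M_0}v,\;C_D,\;1/s_2\}=:1/s_0$ on $\overline{M^n}\times[0,T-\delta]$, with $s_0>0$ depending only on $F_0$ and independent of $\delta$; letting $\delta\to0$ yields $\IP{\nu^{M_t}}{e_3}\ge s_0$ on $\overline{M^n}\times[0,T)$, as claimed. The main obstacle is exactly the Neumann boundary analysis, and within it the degenerate configuration $\IP{\tau_1}{e_3}=0$, where the Hopf inequality does not by itself produce a contradiction: there one must instead read off the value of $\IP{\nu^{M_t}}{e_3}$ directly from the orthonormality relation and close the argument using the strict height bound $\IP{F}{e_3}<1$. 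The remaining care is bookkeeping --- justifying that $\IP{\nu^{M_t}}{e_3}$ never vanishes (continuation), verifying that $\IP{F}{e_3}$ stays strictly positive, and handling the Neumann--Dirichlet corner by a parabolic frustum --- all carried out exactly as elsewhere in the paper.
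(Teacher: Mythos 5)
Your proposal is correct and follows essentially the same route as the paper: an evolution equation for $\IP{\nu^{M_t}}{e_3}$ (you phrase it for the reciprocal $v=1/s$, which is only cosmetic), the maximum principle with a barrier bound on the Dirichlet boundary, and at a putative Neumann extremum the Hopf Lemma combined with Stahl's relation $A^{M_t}(\tau_1,\nu^\Sigma)=-A^\Sigma(\tau_1,\nu^{M_t})=0$, the vanishing tangential derivative giving $A^{M_t}(\tau_1,\tau_1)\IP{\tau_1}{e_3}=0$, and then $H\le0$ together with $\IP{\nu^\Sigma}{e_3}=-\IP{F}{e_3}<0$ to contradict the Hopf sign. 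The one genuine divergence is the degenerate configuration $\IP{\tau_1}{e_3}=0$: the paper dismisses it with the brief assertion that a horizontal boundary tangent is incompatible with a new minimum of $s$, whereas you instead read off $s^2=1-\IP{F}{e_3}^2$ from the orthonormal frame $\{\tau_1,\nu^\Sigma,\nu^{M_t}\}$ and close with the preserved strict height bound; this is arguably a cleaner quantitative treatment of that case, but note it uses $\sup_{M_0}\IP{F_0}{e_3}<1$ (and $|h_0|<1$), which is a hypothesis of Theorem \ref{thmSombrero} rather than of the lemma as literally stated, and your appeal to Lemma \ref{LMuniformheight} for the preservation of \emph{strict positivity} of the height should more properly cite the plane-comparison barrier the paper uses, since that lemma only controls $u^2$ from above.
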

\begin{proof}%{{{
We shall conduct as much of the proof as is possible in arbitrary dimension in order to highlight exactly where we require a restriction on the dimension of the solution.

In order to obtain a uniform gradient bound we must obtain a uniform positive lower bound for $s(p,t) = \IP{\nu^{M_t}(p,t)}{e_{n+1}}$.
The evolution of $s$ is
\[
(\partial_t-\Delta)s = |A^{M_t}|^2s\,.
\]
By the minimum principle for an initial positive $s$ we obtain that
\begin{align*}
\inf_{M_t} s\geq \min\{ \inf_{M_0} s, \inf_{\partial_D M_t} s, \inf_{\partial_N M_t} s\},
\end{align*}
for all $t\geq 0$.
Now on the Dirichlet boundary a standard barrier construction prevents the gradient from becoming unbounded and so we have $s(p,t) > s_0$ for $p\in\partial_DM^n$ and some $s_0 > 0$ depending on only on initial values.

It only remains to check the Neumann boundary. At an assumed point of minimum, we use again the Hopf Lemma, to obtain
\begin{equation*}
0 > \nabla_{\nu^\Sigma}s = \IP{\nabla_{\nu^\Sigma}\nu^{M_t}}{e_{n+1}}
\,.
\end{equation*}
Let us use local Fermi coordinates at the boundary to compute
\begin{align}
\IP{\nabla_{\nu^\Sigma}\nu^{M_t}}{e_{n+1}}
 &= \sum_{i=1}^{n-1} A^{M_t}(\tau_i,\nu^\Sigma)\IP{\tau_i}{e_{n+1}}
    + A^{M_t}(\nu^\Sigma,\nu^\Sigma)\IP{\nu^\Sigma}{e_{n+1}}
\notag\\
 &= H\IP{\nu^\Sigma}{e_{n+1}}
  - \sum_{i=1}^{n-1} A^{M_t}(\tau_i,\tau_i)\IP{\nu^\Sigma}{e_{n+1}}\,.
\label{EQhopfgradient}
\end{align}
Now since $s$ attains a new global minimum, this is also a new minimum on $\partial_NM^n$ and so at this point $\nabla_{\tau_i}s = 0$ for $i = 1,\ldots,n-1$.
That is,
\[
0 = \nabla_{\tau_i}\IP{\nu^{M_t}}{e_{n+1}}
  = \sum_{j=1}^{n-1}A^{M_t}(\tau_j,\tau_i)\IP{\tau_j}{e_{n+1}}
   + A^{M_t}(\tau_i, \nu^\Sigma)\IP{\nu^\Sigma}{e_{n+1}}
\]
Since $\Sigma$ is a sphere, we have from Proposition \ref{sigmacurvaturerelated} that $A^{M_t}(\tau_i, \nu^\Sigma) = 0$ for $i \ne n$ and so the above simplifies to
\[
  \sum_{j=1}^{n-1}A^{M_t}(\tau_j,\tau_i)\IP{\tau_j}{e_{n+1}} = 0
\,.
\]
In order to obtain useful information from the above equation we now consider the case of evolving surfaces.
For the remainder of the proof we shall enforce $n=2$.
In this case, we obtain from the above
\[
A^{M_t}(\tau_1,\tau_1)\IP{\tau_1}{e_3} = 0\,.
\]
Now $\IP{\tau_1}{e_3} \ne 0$ since if this were the case then the boundary curve would be parallel to the plane of definition as a graph and in particular at such a point we could not have a new minimum for the quantity $s =\IP{\nu^{M_t}}{e_3}$.
We therefore conclude that $A^{M_t}(\tau_1,\tau_1) = 0$.
Substituting this into \eqref{EQhopfgradient} and using $H \le 0$ we find
\[
0 >
  H\IP{\nu^\Sigma}{e_3}
  - A^{M_t}(\tau_1,\tau_1)\IP{\nu^\Sigma}{e_{n+1}}
=
  H\IP{\nu^\Sigma}{e_3}\geq 0\,.
\]
where we have used the initial condition $\IP{F_0}{e_3} > 0$ which on the Neumann boundary (where $F_0=-\nu^{\Sigma}(F_0)$ for $\Sigma$ a sphere) translates into $\IP{\nu^{\Sigma}}{e_3} < 0 $. This condition is preserved for all times of existence (for example also for the time of a presumed minimum of $s$ on the Neumann boundary) by a comparison principle with the flat plane at zero height, which acts as a barrier for $M_t$. This contradicts the existence of a minimum of $s$ on the Neumann boundary and therefore $s$ is bounded by below a priori by a constant depending only on the initial values.
\end{proof}%}}}

\begin{rmk}
The initial condition above forces the height on the Dirichlet boundary away from zero, that is, $h_0\ne0$ in \eqref{immersion}.
\end{rmk}

As outlined at the start of this section, the above is enough to conclude global existence for the mean curvature flow of any initially graphical mean concave surface. The same result is true for mean convex under the initial assumption $\IP{F_0}{e_0}<0$
In order to identify the limit we use a standard argument.

\begin{lem}
\label{LMminimal}
Suppose $M_t = F(M^n,t)$ is a mean curvature flow satisfying \eqref{immersion} with uniformly bounded derivatives of all orders.
If $M_t$ exists globally in time then $M_t$ is asymptotic to a minimal hypersurface.
\end{lem}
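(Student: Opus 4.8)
The plan is to run the standard area-monotonicity argument, adapted to the free boundary setting. First I would compute the evolution of the area $|M_t| = \int_{M_t} d\mu$. Under the flow $\partial_t F = -H\nu^{M_t}$ the first variation formula gives
\[
\frac{d}{dt}|M_t| = -\int_{M_t} H^2\, d\mu + \int_{\partial M_t} \IP{\partial_t F}{\eta}\, d\sigma,
\]
where $\eta$ is the outward unit conormal of $\partial M_t$ in $M_t$. On the Dirichlet portion $\partial_D M_t$ the boundary is fixed, so $\partial_t F = 0$ and the integrand vanishes. On the Neumann portion $\partial_N M_t$, the perpendicularity $\IP{\nu^{M_t}}{\nu^\Sigma} = 0$ forces $\nu^\Sigma \in T M_t$ and $\nu^\Sigma \perp T(\partial_N M_t)$, so $\eta = \pm\nu^\Sigma$ and $\IP{\partial_t F}{\eta} = \mp H\IP{\nu^{M_t}}{\nu^\Sigma} = 0$. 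Hence $\frac{d}{dt}|M_t| = -\int_{M_t} H^2\, d\mu \le 0$. Since $|M_t|$ is nonincreasing and bounded below by $0$, it converges as $t\to\infty$, and integrating in time yields $\int_0^\infty \int_{M_t} H^2\, d\mu\, dt = |M_0| - \lim_{t\to\infty}|M_t| < \infty$.

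Consequently there is a sequence $t_k \to \infty$ with $\int_{M_{t_k}} H^2\, d\mu \to 0$. By hypothesis the derivatives of $F$ of all orders are uniformly bounded on $[0,\infty)$, so (using also the height bound of Lemma \ref{LMuniformheight} and the fixed Dirichlet boundary to keep the images in a compact region) the family $\{F_{t_k}\}$ is precompact in $C^\infty$; a subsequence, not relabelled, converges in $C^\infty$ to a smooth embedding $F_\infty : M^n \to \R^{n+1}$ whose image $M_\infty$ is a hypersurface-with-boundary still meeting $\Sigma$ orthogonally along $\partial_N M^n$ and with $\partial_D M^n$ mapped onto $\partial B_R(O_n^{h_0})$, these conditions being closed under $C^1$ convergence. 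Since $H$ depends continuously on $F$ in $C^2$, $H_{t_k} \to H_\infty$ uniformly and
\[
\int_{M_\infty} H_\infty^2\, d\mu = \lim_{k\to\infty} \int_{M_{t_k}} H_{t_k}^2\, d\mu = 0,
\]
so $H_\infty \equiv 0$ and $M_\infty$ is minimal. Thus $M_t$ is asymptotic to the minimal hypersurface $M_\infty$.

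If one wants convergence of the whole flow rather than only subsequential convergence, I would additionally use the uniform curvature bounds to obtain an estimate of the form $\frac{d}{dt}\int_{M_t}H^2\,d\mu \le C\int_{M_t}H^2\,d\mu$ (the boundary term here is controlled exactly as above, using $\nabla_{\nu^\Sigma}H = HA^\Sigma(\nu^{M_t},\nu^{M_t}) = -H$ from Proposition \ref{sigmacurvaturerelated} and $A^{M_t}(\tau_s,\nu^\Sigma)=0$). Together with $\int_0^\infty\int_{M_t}H^2\,d\mu\,dt < \infty$ this forces $\int_{M_t}H^2\,d\mu \to 0$ as $t\to\infty$; interpolating against the uniform higher-order bounds gives $\sup_{M_t}|H|\to 0$, and re-running the compactness step then upgrades the conclusion to convergence of $M_t$ as $t\to\infty$.

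The main obstacle is the careful treatment of the boundary: one must verify that the conormal at the free boundary is indeed $\pm\nu^\Sigma$ so that the first-variation boundary term vanishes, and that the limiting hypersurface-with-boundary inherits the correct boundary data and is non-degenerate. The interior part of the argument is entirely standard.
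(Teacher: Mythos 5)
Your proposal is correct and follows essentially the same route as the paper: the area functional decreases under the flow (with the Dirichlet and perpendicular Neumann boundary terms vanishing in the first variation), giving $\int_0^\infty\int_{M_t}H^2\,d\mu\,dt<\infty$, and the uniform bounds on all derivatives then yield convergence to a limit with $H\equiv 0$. You merely spell out the boundary-term cancellation and the subsequential-versus-full convergence point that the paper leaves implicit as a "standard argument."
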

\begin{proof}%{{{
The flow is a gradient flow for the area functional, and so
\[
\frac{d}{dt}\int_{M^n}\,d\mu = -\int_{M^n}|\partial_tF|^2d\mu
\]
which implies
\[
\int_0^\infty\int_{M^n}H^2d\mu\,dt \le \int_{M^n}\,d\mu\bigg|_{t=0} = c\,.
\]
Since all derivatives are uniformly bounded, we conclude that $M_t\rightarrow M_\infty$ and that the mean curvature of $M_\infty$ is identically zero. This argument has been used before \cite{huisken1989npm,buckland2005mcf,thesisvulcanov}.
\end{proof}%}}}

\section*{acknowledgements}
The second author would like to thank Klaus Ecker for his support and supervision during her PhD, when much of the work in this paper was completed.
The second author would also like to thank Gerhard Huisken for his interest and stimulating discussion about the topic of this paper. The first author was supported by Alexander-von-Humboldt Fellowship 1137814 and the second author by DFG Grant ME 3816/2-1. Both authors are currently supported by ARC Discovery Project DP120100097 at the University of Wollongong.
The authors want to especially thank Prof. Ben Andrews for suggesting the use of Killing vector fields in mean curvature flow.
\bibliographystyle{plain}
\bibliography{mbib}

\end{document}